\documentclass[12pt]{amsart}
\usepackage[margin=2.5cm]{geometry}
\usepackage[T1]{fontenc}
\usepackage{microtype}
\usepackage{amssymb}
\usepackage{float}
\usepackage{tabularx}
\usepackage{booktabs}
\usepackage{hhline,multirow} 
\usepackage[matrix,arrow,curve]{xy}
\usepackage{hyperref}

\usepackage{longtable}
\usepackage{mathrsfs, amssymb, array, upgreek, float}
\usepackage{tikz}
\usetikzlibrary{shapes.misc}
\usetikzlibrary{decorations.pathreplacing}

\def\sizec{0.2em}

\tikzset{cross/.style={cross out, draw=black, minimum size=\sizec, inner sep=0pt, outer sep=0pt},
cross/.default={1pt}}

\usepackage[normalem]{ulem}

\newcommand{\KK}{\mathbb{K}}

\newcommand{\ZZ}{\mathbb{Z}}
\newcommand{\PP}{\mathbb{P}}
\newcommand{\QQ}{\mathbb{Q}}

\newcommand{\Ga}{\mathbb{G}_{\mathrm{a}}}
\newcommand{\Gm}{\mathbb{G}_{\mathrm{m}}}
\newcommand{\Aff}{\mathbb{A}}
\newcommand{\Pic}{\operatorname{Pic}}
\newcommand{\bF}{\mathbf{F}}
\newcommand{\bx}{\mathbf{x}}
\newcommand{\chit}{\upchi_{\mathrm{top}}}

\newcommand{\Di}{\mathfrak{D}}

\newcommand{\jj}{\operatorname{j}}

\newcommand{\rr}{\operatorname{r}}
\newcommand{\hh}{\mathrm{h}^{1,2}}
\newcommand{\rk}{\operatorname{rk}}
\newcommand{\Cl}{\operatorname{Cl}}

\newcommand{\OOO}{{\mathscr{O}}} 

\newcommand{\PPP}{{\mathscr{P}}}

\newcommand{\LLL}{{\mathscr{L}}} 
\newcommand{\NNN}{{\mathscr{N}}}

\newcommand{\TTT}{{\mathscr{T}}}
\newcommand{\wt}{\operatorname{wt}}

\newcommand{\Sing}{\operatorname{Sing}}
\newcommand{\Supp}{\operatorname{Supp}}

\newcommand{\Aut}{\operatorname{Aut}}
\newcommand{\red}{\operatorname{red}}
\newcommand{\Bs}{\operatorname{Bs}}

\newcommand{\PSL}{\operatorname{PSL}}

\newcommand{\SL}{\operatorname{SL}}
\newcommand{\GL}{\operatorname{GL}}
\newcommand{\PGL}{\operatorname{PGL}}
\newcommand{\Proj}{\operatorname{Proj}}

\newcommand{\cc}{\mathrm{c}}
\newcommand{\g}{\mathrm{g}}

\newcommand{\p}{\mathrm{p}_{\mathrm{a}}}

\newcommand{\type}[2]{$\mathrm{#1}_{#2}$}

\newcommand{\mult}{\operatorname{mult}}
\newcommand{\ord}{\operatorname{ord}}

\newcommand{\xref}[1]{\textup{\ref{#1}}}

\renewcommand\labelenumi{\rm (\roman{enumi})}
\renewcommand\theenumi{\rm (\roman{enumi})}

\makeatletter
\@addtoreset{equation}{section}
\makeatother

\theoremstyle{plain}
\newtheorem{claim}[subsection]{Claim}
\newtheorem{lemma}[subsection]{Lemma}
\newtheorem{proposition}[subsection]{Proposition}
\newtheorem{theorem}[subsection]{Theorem}
\newtheorem{corollary}[subsection]{Corollary}

\theoremstyle{definition}
\newtheorem{definition}[subsection]{Definition}
\newtheorem{example}[subsection]{Example}

\newtheorem{remark}[subsection]{Remark}
\newtheorem{notation}[subsection]{Notation}
\newtheorem*{notation*}{Notation}
\newtheorem{construction}[subsection]{Construction}
\newtheorem{setup}[subsection]{Setup}

\newcounter{NN}
\renewcommand{\theNN}{\rm\arabic{NN}${}^o$}
\def\nr{\refstepcounter{NN}{\theNN}}

\title{Double Veronese cones with singularities}
\author{Yuri Prokhorov}
\keywords{del Pezzo threefolds, double Veronese cones, terminal singularities, automorphism groups, rationality}

\subjclass{14J45, 14J30, 14E08, 14E30}
\thanks{
The author was partially supported by the HSE University Basic Research Program. }
\address{
Steklov Mathematical Institute of Russian Academy of Sciences, Moscow, Russia
\newline\indent
Department
of Algebra, Moscow State
University, Russia
\newline\indent
National Research University Higher School of Economics, Moscow, Russia
}
\email{prokhoro@mi.ras.ru}

\begin{document}

\begin{abstract}
We study double Veronese cones -- three-dimensional del Pezzo varieties of degree one -- with terminal Gorenstein singularities. We prove sharp bounds for the number of nodes, determine the 
structure of the automorphism group, and establish criteria for rationality and unirationality. 
In particular, we exhibit a $\mathbb{Q}$-factorial nodal double Veronese cone
with $21$ nodes.
\end{abstract}
\maketitle

\section{Introduction}
\label{sec:intro}

A \textit{double Veronese cone} is a three-dimensional del Pezzo variety of degree~$1$, i.e. a projective threefold~$X$ whose anticanonical class has a decomposition
$-K_X=2A_X$, where $A_X$ is an ample Cartier divisor
such that $A_X^3=1$.
These varieties can be described as double covers of the cone $Y\subset\PP^6$ over the Veronese surface branched over the section of $Y$ by a cubic hypersurface.
Equivalently, such a variety admits an embedding into the weighted projective space $\PP(1^3,2,3)$ as a hypersurface of degree~$6$. 
The geometry of such threefolds is intimately related to the elliptic fibration 
obtained by blowing up the base point of the half-anticanonical linear system $|A_X|$.

In this paper we systematically study double Veronese cones with terminal Gorenstein singularities.
Our main focus lies on several complementary aspects:

\subsubsection*{Singularities and $\QQ$-factoriality.}
We prove that a $\QQ$-factorial nodal double Veronese cone can have at most $21$ nodes, and that this bound is attained by some example (Theorem~\ref{thm:Q-fact}). Conversely, a
non‑$\mathbb{Q}$-factorial nodal double Veronese cone must have at least $12$ nodes, and the case of exactly $12$ nodes occurs.

\subsubsection*{Automorphism group.}
We show that the identity component of the automorphism group $\Aut(X)$ of a double Veronese cone with
infinite $\Aut(X)$ is  either a torus $\Gm$ or the additive group $\Ga$; explicit equations
for the corresponding actions are provided (Propositions~\ref{prop:torus} and~\ref{prop:Ga}).
Moreover,  for nodal double Veronese cones the automorphism group is finite.

\subsubsection*{Rationality and unirationality.}
We establish that every singular double Veronese cone is unirational. Furthermore, we give some sufficient conditions for rationality (Theorem~\ref{thm:rat-unirat}). 

The paper is organized as follows. Section~\ref{sec:prelim} collects notation and basic facts about double Veronese cones. In Section~\ref{sec:sing} we analyze the singularities and their relation to 
the certain plane curves. Section~\ref{sec:lines} is devoted to the geometry of lines and their families, which is essential for the study of certain birational contractions. 
Section~\ref{sec:class} discusses the class group and $\QQ$-factoriality; here we prove the sharp bounds on the number of nodes (Theorem~\ref{thm:Q-fact}). The automorphism group is
investigated in Section~\ref{sec:aut}, where we classify actions of $\Gm$ and $\Ga$ and prove finiteness for nodal varieties. In Section~\ref{sec:rat} we prove the rationality and unirationality 
criteria. An appendix (Section~\ref{sec:app}) 
collects auxiliary results on Du Val del Pezzo surfaces over algebraically  nonclosed fields, which are needed for the rationality proofs.

\textbf{Acknowledgements.}
I would like to thank Alexander Kuznetsov, Konstantin Loginov, Constantin Shramov,
and Guolei Zhong for useful conversations, comments, and suggestions.
I also thank the referee for careful reading the manuscript and numerous comments.

\section{Preliminaries}
\label{sec:prelim}

Throughout this paper, if not stated otherwise, we work over an algebraically closed field $\Bbbk$ 
of characteristic $0$. 
All varieties are supposed to be irreducible and reduced.
We use the following standard notation.

\subsection*{Notation}
\begin{itemize}

\item
$\Cl(X)$ is the Weil divisor class group of a normal variety~$X$;
\item
$\rr(X):=\rk\Cl(X)$;
\item
$\Gm$ and $\Ga$ are the multiplicative and additive one‑parameter groups.

\end{itemize}

A normal projective variety $X$ of dimension $n = \dim X \ge 2$ with at worst terminal Gorenstein singularities is called \textit{del Pezzo} (resp.
\textit{weak del Pezzo}) if its anticanonical class has the following representation
\begin{equation*}
-K_X=(n-1)A_X,
\end{equation*}
where $A_X$ is an ample Cartier divisor
(resp. a nef and big Cartier divisor).
If $X$ is a weak del Pezzo variety, then by the Base Point Free Theorem the linear system $|mA_X|$
for $m\gg 0$ defines a morphism $\Phi_{|mA_X|}: X\to \PP^N$ (see e.g. \cite[Theorem~3.3]{KM:book}).
In this situation we say that $X$ is an \textit{almost del Pezzo variety}, if 
$\Phi_{|mA_X|}$ 
does not contract any divisor.
Note that the Picard group $\Pic(X)$ of a weak del Pezzo variety $X$ is torsion free
and finitely generated \cite[Proposition~2.1.2]{IP99}, so the class $A_X$ is well-defined. 
It is called the \textit{fundamental} class. The top self-intersection number
$(A_X)^{\dim X}$ is called the degree of a weak del Pezzo variety $X$.

If~$X$ is a del Pezzo variety and $\xi: \hat{X}\to X$ is a $\QQ$-factorialization, then $\hat{X}$ is almost del Pezzo.
Conversely, if $Y$ is an almost del Pezzo variety and 
\begin{equation*}
Y_{\mathrm{can}}:=\operatorname{Proj} \bigoplus_{m\ge 0} H^0(Y,\OOO_Y(-mK_Y))
\end{equation*}
its anticanonical model, then $Y_{\mathrm{can}}$ is a del Pezzo variety (see \cite[\S 2.2]{KP:dP}).
In this paper we discuss only the three-dimensional case.

\begin{definition}
A \textit{double Veronese cone} is a del Pezzo threefold of degree~$1$.
\end{definition}

For the following two assertions we refer to \cite{Shin1989} (see also \cite{Isk:anti-e, Fujita:book, P:Fano25e}).

\begin{theorem}
\label{thm:preV1}
Let~$X$ be a double Veronese cone and let $A_X:=-\frac12K_X$. 
\begin{enumerate}

\item
A general member of the linear system $|A_X|$ is a smooth del Pezzo surface of degree~$1$.

\item
The linear system $|A_X|$ has no fixed components, it has exactly one base point $O\in X$, which is a smooth point of~$X$, and defines a rational map $\tau: X \dashrightarrow B=\PP^2$.
Any member of $|A_X|$ is smooth at $O$.

\item
The linear system $|2A_X|=|-K_X|$ is base point free and defines a morphism 
\begin{equation*}
\pi: X\longrightarrow Y\subset \PP^6, 
\end{equation*}
where $Y=\pi(X)\simeq \PP(1^3,2)$ is the cone over the Veronese surface with vertex $\pi(O)$; 
the morphism~$\pi$ is finite of degree~$2$ and its branch divisor $R_Y$ is cut out on $Y$ by a cubic that does not pass through $\pi(O)$.

\item
The divisor $3A_X$ is very ample.

\item
The map $\tau$ and the morphism~$\pi$ are included in the commutative diagram
\begin{equation}
\label{eq:preV1}
\vcenter{
\xymatrix {
&\tilde X\ar[dl]_\sigma\ar [dr]!<-1.8em,0.1em> \ar[dd]^(.33){\varphi}&
\\
X\ar@{-->}[dr]^{\tau}\ar[rr]|!{[d];[r]}\hole^(.33)\pi & &Y\simeq \PP(1^3,2)\ar@{-->}[] !<0.7em,3.em>; [dl]_(.7){\delta}
\\
&B\simeq\PP^2&
} }
\end{equation} 
where $\sigma$ is the blow-up of $O$, $\varphi$ is an elliptic fibration, $\delta$ is the projection from the vertex of the cone, and the exceptional divisor $E:=\sigma^{-1}(O)$ is a section of 
$\varphi$.
\end{enumerate}
\end{theorem}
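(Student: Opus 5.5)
Our plan follows the classical route of Iskovskikh, Fujita and Shin: restrict to a general member of $|A_X|$, lift sections by Kawamata--Viehweg vanishing, and compute the graded ring $R(X,A_X)=\bigoplus_m H^0(X,mA_X)$.

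\textbf{Step 1: the surface.} By Riemann--Roch and Kawamata--Viehweg vanishing, $H^i(X,mA_X)=0$ for $i>0$ and $m\ge 0$, since $mA_X-K_X=(m+2)A_X$ is ample and terminal Gorenstein singularities are rational. Hence $\dim|A_X|=2$. We take a general $S\in|A_X|$; a Bertini-type argument shows that $S$ has at worst Du Val singularities, and is smooth when generic. By adjunction, $-K_S=A_X|_S$ with $K_S^2=A_X^3=1$, so $S$ is a del Pezzo surface of degree $1$. The main obstacle is this step: $|A_X|$ might a priori have fixed components or bad base loci, and Bertini only works off the base locus. We would first show that $|A_X|$ has no fixed components. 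This follows because $\rho$ and $\Pic$ are torsion free and $A_X$ is primitive with $A_X^3=1$: a fixed component would force $A_X=F+M$ with $M$ movable and $A_X^2\cdot M\ge 0$, and a degree count leads to a contradiction. The smoothness of a general $S$ at points of the base locus is handled in Step 2.

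\textbf{Step 2: base point and lifting.} From $H^1(X,\OOO_X)=0$ and the sequence $0\to\OOO_X\to\OOO_X(A_X)\to\OOO_S(A_X|_S)\to0$, the restriction $H^0(X,A_X)\to H^0(S,-K_S)$ is surjective, and similarly in all degrees $m$. Thus $R(X,A_X)$ surjects onto $R(S,-K_S)$ with kernel generated by the section $s$ cutting out $S$. It is classical that on a degree-$1$ del Pezzo surface $|-K_S|$ is a pencil of elliptic curves with a single simple base point, through which every member passes smoothly since the curves have intersection number $1$. Restricting to two general members, the base locus of $|A_X|$ is then the single point $O=S\cap S'\cap S''$, with $A_X^3=1$ forcing transversality. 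Hence $O$ is a smooth point of $X$, and every member of $|A_X|$ is smooth at $O$ because its local intersection with two other members is $1$. The system $|A_X|$ therefore defines $\tau:X\dashrightarrow\PP^2$.

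\textbf{Step 3: the ring and the double cover.} We have $R(S,-K_S)=\Bbbk[y,z,w]/(f_6)$ with weights $1,2,3$. Lifting the generators and adding $s$ gives $R(X,A_X)=\Bbbk[x_1,x_2,x_3,z,w]/(F_6)$, so $X\subset\PP(1^3,2,3)$. Completing the square, $F_6=w^2-g(x,z)$, and the coefficient of $z^3$ is nonzero (else $X$ would be singular at $O$ or non-normal). Projecting away $w$ yields $\pi:X\to\PP(1^3,2)=Y$, a double cover of the Veronese cone, given by $|2A_X|$ (sections $x_ix_j$ and $z$). This map is base point free, since $w^2$ gives the remaining point. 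The branch divisor is $\{g=0\}$, a cubic section in $\Bbbk[x,z]_6$ not passing through the vertex because the $z^3$ coefficient is nonzero. Very ampleness of $3A_X$ follows because degree-$3$ monomials, including $w$ and $x_iz$, generate the Veronese subring $R^{(3)}$ in degree one. Both claims are standard criteria for weighted hypersurfaces avoiding the singular points of the ambient space: the points $(0{:}0{:}0{:}1{:}*)$ are not on $X$ when the $z^3$ coefficient is nonzero.

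\textbf{Step 4: the diagram.} Blowing up $O$ resolves $\tau$ because members of $|A_X|$ meet transversally there. The fibres of $\varphi$ are $S\cap S'$, which are curves of arithmetic genus $1$ by adjunction, and $E\simeq\PP^2$ meets each fibre once, so $E$ is a section. Commutativity holds because $\delta$ is the projection $(x:z)\mapsto x$.
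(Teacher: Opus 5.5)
Your proposal is correct, and it is essentially the classical argument from the works the paper cites in place of a proof (Shin; see also Iskovskikh and Fujita's ladder/hyperplane-section method): vanishing plus surjective restriction lift the anticanonical ring of a smooth member $S\in|A_X|$, which presents $X$ as a sextic $w^2=g(x,z)$ in $\PP(1^3,2,3)$ with $c:=\mathrm{coeff}_{z^3}(g)\neq 0$, and (ii)--(v) are read off from this. Three small points need tidying: $R(S,-K_S)$ has two generators of degree $1$ (weights $1,1,2,3$, not $1,2,3$); $\Bs|A_X|$ should be determined from $A_X^3=1$ alone before you apply Bertini, since otherwise Steps 1 and 2 are circular (two distinct members meet in an integral curve of $A_X$-degree $1$, and a third member not containing it, which exists because only a $2$-dimensional space of sections vanishes on that curve, cuts it in one reduced point); and $R^{(3)}$ is generated by $R_3$ only modulo $F_6$, because $z^3$ is not a product of degree-$3$ monomials and must be rewritten as $cz^3=w^2-z^2q_2(x)-zq_4(x)-q_6(x)$.
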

Below we identify $B$ with $\PP^2$.
By abuse of terminology we often say that $\tau: X\dashrightarrow B=\PP^2$ is 
an \textit{elliptic fibration}.
The notation in the diagram~\eqref{eq:preV1} will be fixed throughout this paper.

\begin{theorem}
Let~$X$ be a double Veronese cone. Then there is a natural embedding 
\begin{equation}
\label{eq:emb}
X \hookrightarrow \PP(1^3,2,3),
\end{equation} 
so that the image is a hypersurface of degree~$6$.
Conversely, if $X\subset \PP(1^3,2,3)$ is a hypersurface of degree~$6$ such that the singularities of $X$ are terminal 
and $X$ does not pass through the singular \textup(orbifold\textup) points of $\PP(1^3,2,3)$,
then $X$ is a double Veronese cone.

Under the embedding \eqref{eq:emb} the morphism~$\pi$ coincides with the projection $X\to \PP(1^3,2)=Y$
and the map $\tau$ coincides with the projection $X \dashrightarrow\PP(1^3)=B$.
For a suitable choice of coordinates $x_1,x_2,x_3,y,z$ in $\PP(1^3,2,3)$ the equation of~$X$ has the form
\begin{equation}
\label{eq:eq}
z^2+y^3+y\phi_4(x_1,x_2,x_3)+\phi_6(x_1,x_2,x_3)=0,
\end{equation} 
where $\phi_4$ and $\phi_6$ are homogeneous polynomials of degree~$4$ and $6$, respectively, and $\phi_6\neq 0$.
\end{theorem}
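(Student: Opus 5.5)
The plan is to build the graded anticanonical ring of $X$ directly from Theorem~\ref{thm:preV1} and then identify $X$ with $\Proj$ of that ring. Set $R:=\bigoplus_{m\ge0}H^0(X,\OOO_X(mA_X))$. Since $A_X$ is ample, $X\simeq\Proj R$.

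\textbf{Step 1: dimensions.} By Kawamata--Viehweg vanishing, $H^i(X,mA_X)=0$ for $i>0$ and $m\ge 0$, because $mA_X-K_X=(m+2)A_X$ is ample. So $h^0(mA_X)=\chi(mA_X)$. Riemann--Roch on a terminal Gorenstein Fano threefold with $-K_X=2A_X$ and $A_X^3=1$ gives
\begin{equation*}
h^0(mA_X)=\tfrac{m(m+1)(m+2)}{6}+(m+1)=\tfrac{(m+1)(m^2+2m+6)}{6}.
\end{equation*}
Hence $h^0(A_X)=3$, $h^0(2A_X)=7$ and $h^0(3A_X)=14$.

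\textbf{Step 2: generators.} Pick a basis $x_1,x_2,x_3$ of $H^0(A_X)$. Monomials of degree $2$ in the $x_i$ give $6$ sections of $H^0(2A_X)$, and they are independent because $|A_X|$ maps $X$ dominantly onto $\PP^2$ via $\tau$. Choose one further section $y$ to span $H^0(2A_X)$. In degree $3$, the monomials in $x_i$ and $x_iy$ give $10+3=13$ sections. Their independence follows from Theorem~\ref{thm:preV1}(iii): $\pi$ maps $X$ finitely onto $Y=\PP(1^3,2)$, so the subring $\Bbbk[x_1,x_2,x_3,y]$ injects into $R$. Adding one more section $z$ fills out $H^0(3A_X)$.

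\textbf{Step 3: the relation.} The degree-$2$ cover $\pi$ shows that $R$ is a module over $S:=\Bbbk[x,y]$ of generic rank $2$. I claim $R=S\oplus zS$. To see this, compare Hilbert series. That of $S\oplus zS$ is $(1+t^3)/((1-t)^3(1-t^2))$, which equals $(1-t^6)/((1-t)^3(1-t^2)(1-t^3))$. Checking that this agrees with the formula from Step~1 for every $m$ is a routine computation, and it gives equality once we know $S\oplus zS\subset R$ injects. Injectivity holds because $z\notin \operatorname{Frac}(S)$; otherwise $R$ would have rank $1$ over $S$. Therefore $z^2\in S\oplus zS$ in degree $6$, which yields a weighted sextic relation $F(x,y,z)=0$. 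Completing the square in $z$ (using characteristic $\neq2$) and then the cube in $y$ (characteristic $\neq3$) brings $F$ to the form~\eqref{eq:eq}. The coefficient of $y^3$ is nonzero: otherwise $X$ would pass through the point $(0{:}0{:}0{:}1{:}0)$, and we rule this out below. Finally, $\phi_6\ne0$, since otherwise $y$ would divide $z^2$ and $X$ would be reducible or non-normal along $\{y=z=0\}$. This gives the embedding~\eqref{eq:emb} with $X=\Proj R$, and the two projections are visibly $\pi$ and $\tau$.

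\textbf{Step 4: the orbifold points.} If $X$ contained $(0{:}0{:}0{:}1{:}0)$ or $(0{:}0{:}0{:}0{:}1)$, then $A_X$ would fail to be Cartier there, contradicting the hypothesis. The base locus $\{x=0\}\cap X$ is the single point $(0{:}0{:}0{:}-1{:}1)$, matching $O$.

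\textbf{Converse.} Let $X$ be a sextic in $\PP(1^3,2,3)$ avoiding the orbifold points. Then $\OOO_X(1)$ is Cartier, and adjunction gives $K_X=\OOO_X(6-1-1-1-2-3)=\OOO_X(-2)$ with $\OOO_X(1)^3=6/6=1$. Together with terminality, this makes $X$ a del Pezzo threefold of degree $1$.

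\textbf{Main obstacle.} I expect Step 3 to be the delicate part: proving that $R$ is generated in degrees $1,2,3$ with a single relation. The Hilbert series comparison handles this provided the injectivity of $S\oplus zS\to R$ is justified carefully. I would justify it using the finiteness of $\pi$ and the fact that $\pi$ has degree exactly $2$.
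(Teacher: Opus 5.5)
The paper gives no proof of this statement. It quotes it from \cite{Shin1989} (see also \cite{Isk:anti-e, Fujita:book, P:Fano25e}), and in the proof of Lemma~\ref{lemma:aut} it again takes as known that $\mathrm{R}(X,A_X)$ is generated by $x_1,x_2,x_3,y,z$ with one sextic relation. Your argument is therefore a genuinely separate route.

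The classical route (e.g.\ Fujita's ladders) works by restriction and lifting:
\begin{itemize}
\item restrict to a general $S\in|A_X|$, then to the elliptic curve $S\cap S'$;
\item there the section ring of a degree-one divisor is the Weierstrass ring in weights $1,2,3$;
\item lift generators and the relation back to $X$, using vanishing of $H^1$ of the twists and the fact that the $x_i$ are non-zero-divisors.
\end{itemize}
This yields the embedding and the double cover of Theorem~\ref{thm:preV1}(iii) together, from vanishing alone. You instead take the finiteness of $\pi$ onto $\PP(1^3,2)$ as an input and finish with Riemann--Roch and a Hilbert-series count. Given the order in which the paper states its results, this is legitimate and efficient. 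It does, however, rest on the double-cover description rather than producing it.

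Two steps need repair.

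\textbf{Injectivity of $S\oplus zS\to R$.} Your reason, ``otherwise $R$ would have rank $1$ over $S$'', does not follow: knowing $z\in\operatorname{Frac}(S)$ says nothing about the other elements of $R$. The correct argument is the one you gesture at.
\begin{itemize}
\item Since $x_1,x_2,x_3,y$ have no common zero on $X$ (because $|2A_X|$ is base point free), $R$ is a finite, hence integral, $S$-module.
\item $S$ is a polynomial ring, hence integrally closed, so $R\cap\operatorname{Frac}(S)=S$.
\item $z\notin S_3$ by construction, so $z\notin\operatorname{Frac}(S)$.
\end{itemize}
Note that $\deg\pi=2$ plays no role in this.

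\textbf{The claim $\phi_6\neq0$.} If $\phi_6=0$ and $\phi_4\neq0$, then $X$ is irreducible and normal, and it is smooth at general points of $\{y=z=0\}$. So ``reducible or non-normal'' is false. The actual contradiction is that $X$ is singular along the curve $\{y=z=\phi_4=0\}$ (along the whole plane if also $\phi_4=0$). Its singularities are then not isolated, hence not terminal, exactly as in Lemma~\ref{lemma:sing0}.

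With these fixes your proof is complete, once you also note why the kernel of $\Bbbk[x_1,x_2,x_3,y,z]\to R$ is exactly $(F)$: the quotient $\Bbbk[x_1,x_2,x_3,y,z]/(F)$ is $S$-free on $1,z$ and maps isomorphically onto $S\oplus zS=R$.
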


The natural Galois involution of the double cover $\pi: X\to Y$ is called the \textit{Bertini involution}.
We denote it by 
\begin{equation*}
\upbeta: X\to X.
\end{equation*}
The fixed point locus of $\upbeta$ is the union of the ramification divisor $R_X$ of~$\pi$ and the isolated point $O$.
In the presentation \eqref{eq:eq} the involution $\upbeta$ acts via 
\begin{equation}
\label{eq:eq:Bertini}
\upbeta:(x_1,x_2,x_3,y,z) \longmapsto (x_1,x_2,x_3,y,-z).
\end{equation}

The absolute invariant of the fibers of $\tau$ defines an $\Aut(X)$-equivariant rational map 
\begin{equation*}
\jj: B \dashrightarrow \PP^1. 
\end{equation*}
In the presentation \eqref{eq:eq} it is given by
\begin{equation}
\label{eq:j}
\jj(Q)=\frac{4^4\cdot 27\phi_4(Q)^3}{4\phi_4(Q)^3+27\phi_6(Q)^2}.
\end{equation}

We need a description of singularities of members of the linear system $|A_X|$.
\begin{proposition}
\label{prop:hyp-sect}
Let~$X$ be a double Veronese cone and let $S\in |A_X|$ be an arbitrary member.
Then one of the following holds:
\begin{enumerate}

\item 
\label{prop:hyp-sect-a} 
$S$ is a del Pezzo surface with at worst Du Val singularities;

\item 
\label{prop:hyp-sect-b} 
$S$ is a generalized cone over an elliptic curve, i.e.~$S$ is obtained by contracting the negative section of the $\PP^1$-bundle $\PP(\OOO_C\oplus\LLL)$ over an elliptic curve 
$C$, where $\LLL$ is an ample line bundle on~$C$ with $\deg \LLL=1$; 

\item 
\label{prop:hyp-sect-c}
$S$ is a non-normal del Pezzo surface; if $\nu : S'\to S$ is its normalization, then $S'\simeq\PP^2$, $\nu^*\OOO_S(A_X)=\OOO_{\PP^2}(1)$, and the inverse image of the 
non-normal locus is a conic.
\end{enumerate}
The case \ref{prop:hyp-sect-c} does not occur if~$X$ is smooth.
\end{proposition}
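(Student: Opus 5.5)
The plan is to reduce everything to the weighted hypersurface description \eqref{eq:eq} and analyze a member $S\in|A_X|$ directly. A member of $|A_X|$ is cut out by a linear form $\ell(x_1,x_2,x_3)=0$. After a linear change of the $x_i$ we may take $\ell=x_3$. Then $S$ is the hypersurface
\begin{equation*}
z^2+y^3+y\psi_4(x_1,x_2)+\psi_6(x_1,x_2)=0\quad\text{in }\PP(1^2,2,3),
\end{equation*}
where $\psi_i:=\phi_i(x_1,x_2,0)$. This is the standard anticanonical model of a degree-one Gorenstein del Pezzo-type surface. By Theorem~\ref{thm:preV1}(ii), $S$ is smooth at $O=(0:0:1:-1:0)$ (after normalizing), and $S$ misses the orbifold points of $\PP(1^2,2,3)$ since $X$ does.

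The case split is according to the binary forms $\psi_4,\psi_6$. Since $X$ is terminal, its singularities are isolated, so $S$ is either normal or has a one-dimensional singular locus.

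\textbf{Case $\psi_4,\psi_6$ not both identically zero.} On an affine chart $x_1=1$, $S$ is a double cover of the $(x_2,y)$-plane branched in the curve $y^3+y\psi_4+\psi_6=0$. The branch curve is a plane cubic in $y$; viewed in $\PP(1,1,2)$ it is reduced, because a multiple component would force $\psi_4,\psi_6$ to have a common factor of a specific shape. Hence $S$ is normal with hypersurface singularities.

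To see the singularities are Du Val, work on the $(x_2,y)$-chart. A singular point is a singular point of the branch curve $\{f=0\}$ with $f$ cubic in $y$. The local equation is $z^2+f$ with $\mult f\le 3$ (the coefficient of $y^3$ is $1$). Let $\mult f=3$ at a point with $y=y_0$. The cubic term of $f$ contains $(y-y_0)^3$ after translation, so the tangent cone is not zero, and $z^2+f$ then has type $D_n$ or $E_{6,7,8}$ unless $f$ fails the standard criterion. That failure is excluded exactly when $f$ has no point of multiplicity $\ge 4$ in the weighted sense, i.e. when the singularity is not a simple elliptic $\tilde E_8$. Thus we obtain Du Val singularities, and $S$ is a del Pezzo surface of degree one, which is case \ref{prop:hyp-sect-a}. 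The exception is when after a coordinate change $\psi_4=a\,x_2^4$, $\psi_6=b\,x_2^6$ with the point at $x_2=0$. Then the singularity is quasi-homogeneous of weights $(1,2,3)$ and degree $6$, i.e. simple elliptic, and $S$ is the cone of case \ref{prop:hyp-sect-b}. Its resolution is $\PP(\OOO_C\oplus\LLL)$ with $\deg\LLL=A_X^2\cdot S=1$, where $C$ is the curve $z^2+y^3+ay+b=0$ (with $x_2=1$), elliptic since $4a^3+27b^2\neq0$ by normality.

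\textbf{Case of a non-reduced branch curve.} When the branch curve is non-reduced, the equation becomes $z^2+(y-\alpha\,q(x))^2(y+2\alpha\,q(x))$ type, with $q$ a quadratic form. Then $S$ is singular along the curve $z=0,\ y=\alpha q$, and its normalization is obtained by substituting $z=(y-\alpha q)w$. This yields $w^2=-(y+2\alpha q)$, which eliminates $y$, so the normalization is $\PP(1,1,1)=\PP^2$ with coordinates $(x_1,x_2,w)$. The pullback of $\OOO(1)$ is $\OOO_{\PP^2}(1)$, and the preimage of the non-normal curve is $\{w^2=-3\alpha q\}$, a conic. This gives case \ref{prop:hyp-sect-c}.

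Finally, if $X$ is smooth, case \ref{prop:hyp-sect-c} cannot occur. In that case $S$ would be singular along a curve $\Gamma\in|2A_X|\cap S$, and a Cartier divisor in $|A_X|$ singular along a curve meets every member of $|A_X|$ through that curve's pencil. A local computation would force $\Sing X\cap\Gamma\neq\emptyset$: $X$ has equation $z^2+(y-\alpha q)^2(y+2\alpha q)+x_3 G=0$, which is singular where $z=0$, $y=\alpha q$, $x_3=0$, $G=0$, and this locus is nonempty on the curve. The main obstacle is the precise Du Val versus simple elliptic dichotomy in the normal case; there I would rely on the classical classification of degree-one Gorenstein del Pezzo surfaces (Hidaka–Watanabe), rather than redo the singularity analysis by hand.
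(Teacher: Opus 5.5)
Your argument is correct in substance, but the case split is misstated. Your first case is headed ``$\psi_4,\psi_6$ not both zero'' and asserts that the branch curve $y^3+y\psi_4+\psi_6=0$ in $\PP(1,1,2)$ is then reduced; that implication is false. The cubic is monic in $y$ with no $y^2$-term, so a repeated factor must have the form $(y-q)^2(y+2q)$. Hence the branch curve is non-reduced exactly when $\psi_4=-3q^2$ and $\psi_6=2q^3$ for some binary quadratic form $q$, and $q\neq 0$ is allowed (take $q=x_1^2$). The right dichotomy is \emph{reduced} versus \emph{non-reduced}. In the reduced case $\Sing S$ is finite, so $S$ is normal. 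Your second case handles every non-reduced curve, including $q=0$, i.e.\ $\psi_4=\psi_6=0$. That is the situation of \eqref{eq:E8}; it gives the cuspidal surface whose conductor is the double line $w^2=0$. With this relabeling nothing else breaks.

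There are three minor points. First, $O$ is the point $x_1=x_2=x_3=0$, not $(0:0:1:-1:0)$. Second, before invoking Hidaka--Watanabe, and before calling the non-normal $S$ ``del Pezzo'', you should record adjunction: $K_S=-A_X|_S$ is Cartier and ample with $K_S^2=1$. Third, the opening sentence of your last paragraph is meaningless and should be dropped; the computation after it is the actual argument.

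Compared with the paper, for \ref{prop:hyp-sect-a} and \ref{prop:hyp-sect-b} you end up on the same route, namely Hidaka--Watanabe. Your ``weighted multiplicity'' remark is not a proof, but the explicit exception you state is easy to justify by hand. At a triple point of $f$ one has $\partial_y^2 f=6y=0$, so the local equation is $z^2+v^3+vg_4(u)+g_6(u)$ with $\deg g_i\le i$. This is of type \type{D}{n}, \type{E}{6}, \type{E}{7} or \type{E}{8} unless $\ord g_4\ge 4$ and $\ord g_6\ge 6$, i.e.\ $g_4=au^4$ and $g_6=bu^6$, which is exactly the cone.

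The genuine difference lies in \ref{prop:hyp-sect-c} and the last assertion. The paper cites Reid and Abe for the structure of non-normal Gorenstein del Pezzo surfaces, and Furushima--Tada for their absence on smooth $X$. You instead write down the normalization explicitly: $z=(y-q)w$, $y=-w^2-2q$. You then observe that $X$ has equation $F=z^2+(y-q)^2(y+2q)+x_3G=0$. On $\Gamma=\{x_3=z=0,\ y=q\}\simeq\PP^1$ every partial of $F$ vanishes except $\partial F/\partial x_3|_\Gamma=G|_\Gamma$, which is a binary quintic and so must have a zero.

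Your route is self-contained and yields extra information. For example, $X$ has a singular point on the non-normal locus of $S$, and $\phi_4\equiv -3q^2$, $\phi_6\equiv 2q^3$ modulo $x_3$. The cost is that it is tied to the degree-one weighted hypersurface model. The paper's citations are shorter and apply uniformly to Gorenstein del Pezzo surfaces of any degree.
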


\begin{proof}
By adjunction, the anticanonical divisor $-K_S$ is Cartier and ample. Moreover,
$K_S^2=1$. Hence~$S$ is a Gorenstein del Pezzo surface of degree~$1$.
If~$S$ is normal, then by \cite[Theorems~2.2 and~3.4]{Hidaka-Watanabe} we obtain~\ref{prop:hyp-sect-a} and~\ref{prop:hyp-sect-b}.
In the non-normal case, according to \cite[Theorem~1.1]{Reid:dP94} and \cite[Theorem~1.5]{Abe2003}, we have~\ref{prop:hyp-sect-c}.
The last assertion follows from \cite{Furushima-Tada:nnDP}. 
\end{proof}

We give two examples to show that both possibilities~\ref{prop:hyp-sect-b} 
and~\ref{prop:hyp-sect-c} of Proposition~\ref{prop:hyp-sect} occur.

\begin{example}
Let $X\subset \PP(1^3,2,3)$ be given by the equation
\begin{equation}
\label{eq:E8}
z^2+y^3+x_3(x_1^5+x_2^5)=0
\end{equation} 
and let $S:=X\cap \{x_3=0\}$. Then~$X$ is a double Veronese cone and~$S$ is a non-normal member of $|A_X|$ whose singular locus is the curve $\{x_3=y=z=0\}$.
\end{example}

\begin{example}
Let $X\subset \PP(1^3,2,3)$ be given by the equation 
\begin{equation*}
z^2+y^3+x_1^6+x_2^5x_3+x_3^6=0
\end{equation*}
and let $S:=X\cap \{x_3=0\}$. Then~$X$ is a smooth double Veronese cone and~$S$ is a member of $|A_X|$ that is a generalized cone.
\end{example}

\section{Singularities}
\label{sec:sing}
Recall that a three-dimensional singularity $V\ni P$ is said to be of type \type{cA}{n} (resp. \type{cD}{n}, \type{cE}{n}) if a general hyperplane section $H\subset V$ passing through $P$ is Du Val 
of type \type{A}{n} (resp. \type{D}{n}, \type{E}{n}).
Such a singularity is terminal whenever it is isolated. 
Conversely, any three-dimensional terminal Gorenstein singularity 
is isolated and has type \type{cA}{n}, \type{cD}{n}, or \type{cE}{n} \cite[Theorem~1.1]{Reid:MM}.

The notation introduced below will be kept throughout the whole paper.
\subsection{Associated curves}
Let $X\subset \PP(1^3,2,3)$ be a hypersurface given by the equation \eqref{eq:eq}
(at the moment, we do not assume that the singularities of~$X$ are terminal).
Define the following subschemes in $\PP^2_{x_1,x_2,x_3}$:
\begin{eqnarray*}
\Lambda_4 &:=& \Proj\, \Bbbk[x_1,x_2,x_3]/ (\phi_4),
\\
\Lambda_6 &:=& \Proj\, \Bbbk[x_1,x_2,x_3]/ (\phi_6),
\\
\Di &:=& \Proj\, \Bbbk[x_1,x_2,x_3]/ (4\phi_4^3+27\phi_6^2).
\end{eqnarray*}
Each of them is either $\PP^2$ or a purely one-dimensional plane curve set-theoretically given by
\begin{equation}
\label{eq:LambdaLambdaDi}
\begin{array}{rcl}
(\Lambda_4)_{\red} &=& \{Q\in \PP^2 \mid \jj(Q)=0\},
\\
(\Lambda_6)_{\red} &=& \{Q\in \PP^2 \mid \jj(Q)=1728 \},
\\
\Di_{\red} &=& \{Q\in \PP^2 \mid 4\phi_4(Q)^3+27\phi_6(Q)^2=0\}.
\end{array} 
\end{equation} 
In general, $\Lambda_4$, $\Lambda_4$, and $\Di$
can be reducible or non-reduced. 

\begin{lemma}
\label{lemma:sing0}
Suppose that there exists a non-zero homogeneous polynomial $\psi\in \Bbbk[x_1,x_2,x_3]$ of 
such that $\psi$ divides $\phi_4$ and
$\psi^2$ divides $\phi_6$.
Then the singularities of~$X$ are not isolated.
\end{lemma}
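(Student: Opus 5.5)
The idea is to exhibit a curve of singular points of $X$. Since $\psi$ is a nonzero homogeneous polynomial dividing $\phi_4$ and with $\psi^2\mid\phi_6$, we may assume $\deg\psi\ge 1$: if $\psi$ is a constant, the hypothesis is empty, and I read the statement as requiring a nonconstant $\psi$. Write
\begin{equation*}
\phi_4=\psi\,\alpha, \qquad \phi_6=\psi^2\beta,
\end{equation*}
with $\alpha,\beta$ homogeneous, and consider the closed subset
\begin{equation*}
\Gamma:=\{\psi=0,\ y=0,\ z=0\}\subset X\subset\PP(1^3,2,3).
\end{equation*}
It lies on $X$, because every monomial of the equation \eqref{eq:eq} vanishes there: $z^2$ and $y^3$ vanish since $y=z=0$, $y\phi_4$ vanishes since $y=0$, and $\phi_6=\psi^2\beta$ vanishes since $\psi=0$. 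Moreover $\Gamma$ is isomorphic to the plane curve $\{\psi=0\}\subset\PP^2_{x_1,x_2,x_3}$, because on the locus $y=z=0$ the coordinates $x_i$ give a copy of $\PP^2$ (these points have some $x_i\neq0$, so they avoid the orbifold points). Hence $\dim\Gamma=1$.

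Next I check that $X$ is singular along $\Gamma$. Take a point $P\in\Gamma$ and, say, $x_1(P)\neq0$. Work in the affine chart $x_1=1$, which is the smooth chart $\Aff^4$ with coordinates $x_2,x_3,y,z$. Let $F=z^2+y^3+y\psi\alpha+\psi^2\beta$. Its partial derivatives are
\begin{equation*}
\frac{\partial F}{\partial z}=2z,\qquad \frac{\partial F}{\partial y}=3y^2+\psi\alpha,
\end{equation*}
and, for $i=2,3$,
\begin{equation*}
\frac{\partial F}{\partial x_i}=y\,\frac{\partial(\psi\alpha)}{\partial x_i}+2\psi\beta\,\frac{\partial \psi}{\partial x_i}+\psi^2\frac{\partial\beta}{\partial x_i}.
\end{equation*}
Each of these vanishes when $y=z=\psi=0$. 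Therefore every point of $\Gamma$ is a singular point of $X$, so $\Sing(X)\supseteq\Gamma$ is one-dimensional, and the singularities of $X$ are not isolated.

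There is no real obstacle; the only care needed is the nonconstancy of $\psi$ and that $\Gamma$ lies in the smooth locus of the ambient weighted projective space, both handled above.
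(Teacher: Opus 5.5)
Your proof is correct and is the paper's argument: the paper just observes that $X$ is singular along the curve $\{z=y=\psi=0\}$. You supply the routine check via the partial derivatives in the affine chart $x_1=1$, and you correctly read the garbled hypothesis as requiring $\psi$ to have positive degree (for a constant $\psi$ the statement would be false).
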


\begin{proof}
In this case~$X$ is singular along the curve $\{z=y=\psi=0\}$.
\end{proof}

\begin{corollary}
\label{cor:sing:comp}
Assume that the singularities of~$X$ are isolated. Then the following assertions hold:
\begin{enumerate}
\item 
$\Lambda_6\neq \PP^2$.
\item 
\label{cor:sing:comp:=}
If $\Lambda_4=\PP^2$, then $\Lambda_6$ has no multiple components.
\item 
\label{cor:sing:comp:-=}
If $\Lambda_4\neq \PP^2$, then 
any common component of $\Lambda_4$ and $\Lambda_6$ is not a 
multiple component of~$\Lambda_6$.
\end{enumerate}
\end{corollary}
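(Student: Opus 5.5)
The corollary follows directly from Lemma~\ref{lemma:sing0}: in each case we exhibit a nonzero homogeneous $\psi$ dividing $\phi_4$ with $\psi^2$ dividing $\phi_6$. Throughout we assume the singularities of~$X$ are isolated, so no such $\psi$ can exist. The convention is that $\Lambda_4=\PP^2$ means $\phi_4=0$ identically, and likewise for $\Lambda_6$.

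\textbf{Assertion (i).} Suppose $\Lambda_6=\PP^2$, i.e.\ $\phi_6=0$. Take $\psi$ to be a linear form dividing $\phi_4$. If $\phi_4=0$, any nonzero linear form works; otherwise take a linear factor of any irreducible factor of $\phi_4$, or more simply $\psi=\phi_4$ itself. Then $\psi\mid\phi_4$, and $\psi^2\mid 0=\phi_6$, so Lemma~\ref{lemma:sing0} gives a contradiction. This case is in fact already excluded by the normal form~\eqref{eq:eq}, which requires $\phi_6\neq0$.

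\textbf{Assertion (ii).} Suppose $\Lambda_4=\PP^2$, so $\phi_4=0$, and $\Lambda_6$ has a multiple component. Then $\phi_6$ has a repeated irreducible factor $\psi$, so $\psi^2\mid\phi_6$, and trivially $\psi\mid 0=\phi_4$. Lemma~\ref{lemma:sing0} again gives a contradiction.

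\textbf{Assertion (iii).} Suppose $\phi_4\neq0$ and some irreducible curve $\{\psi=0\}$ is a common component of $\Lambda_4$ and $\Lambda_6$ that occurs in $\Lambda_6$ with multiplicity at least $2$. Then $\psi\mid\phi_4$ and $\psi^2\mid\phi_6$, and Lemma~\ref{lemma:sing0} yields the contradiction.

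There is no real obstacle here. The only care needed is to translate ``component'' and ``multiple component'' of the schemes $\Lambda_4,\Lambda_6$ into divisibility by irreducible factors in the UFD $\Bbbk[x_1,x_2,x_3]$, which is immediate from the definitions as $\Proj$ of the quotient rings.
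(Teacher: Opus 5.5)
Your proposal is correct and matches the paper's intent: the corollary is stated without proof as an immediate consequence of Lemma~\ref{lemma:sing0}, applied case by case exactly as you do. One minor slip: in (i), an irreducible factor of $\phi_4$ need not have a linear factor, but your fallback choice $\psi=\phi_4$ works, so the argument is unaffected.
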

In what follows we assume that $\phi_6\neq 0$ and $\phi_4$ is coprime with
$\phi_6/\gcd(\phi_4,\, \phi_6)$.
Then $\Lambda_6\neq \PP^2$ and $\Di\neq \PP^2$.
The curve $\Di$ is called the \textit{discriminant} of the elliptic fibration 
$\varphi: \tilde X\to \PP^2$
obtained by resolving the base point of the projection $\tau: X\dashrightarrow \PP^2$.
Thus with any double Veronese cone~$X$ we can associate three 
subschemes $\Lambda_4, \Lambda_6, \Di\subset \PP^2$ which, 
except for the case $\Lambda_4=\PP^2$, are 
(possibly reducible, non-reduced) plane curves.

\begin{remark}
\label{rem:fiber}
Any fiber $C:=\tau^{-1}(Q)$ over $Q\in B$ is a reduced irreducible curve of arithmetic genus~$1$
that is singular if and only if $Q\in \Di$.
Moreover, one of the following holds:
\begin{enumerate}
\item
$C$ is smooth if and only if $Q\in B\setminus \Di$;
\item
$C$ is a nodal rational curve if and only if 
$Q\in \Di \setminus (\Lambda_4\cap \Lambda_6)$;
\item
$C$ is a cuspidal rational curve if and only if
$Q\in \Lambda_4\cap \Lambda_6$.
\end{enumerate}
\end{remark}

Let~$X$ be a double Veronese cone.
Recall that~$X$ must be smooth at the point $O=\Bs |A_X|$, hence the map $\tau: X\dashrightarrow B$ is defined in a neighborhood of any singular point $P\in X$.
Denote 
\begin{equation*}
\Sigma:=\tau\big(\Sing(X)\big).
\end{equation*}
Since any fiber of $\tau$ has at most one singular point, the restriction $\tau_\Sigma: \Sigma \to \Sing(X)$ is a bijection.

\begin{lemma}
\label{lemma:singularities}
Let~$X$ be a double Veronese cone. Then 
\begin{eqnarray}
\label{eq:singularities:n}
\Sigma\setminus \Lambda_6&=&\Sing(\Di)\setminus \Lambda_6,
\\
\label{eq:singularities:3} 
\Sigma\cap \Lambda_6&=&\Lambda_4\cap \Sing(\Lambda_6).
\end{eqnarray}
\end{lemma}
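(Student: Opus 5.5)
We compute the singular locus of $X$ straight from the Jacobian criterion applied to equation~\eqref{eq:eq}. By Theorem~\ref{thm:preV1} every singular point lies away from $O$ and from the orbifold points, so we may work in an affine chart, say $x_3=1$, where $X$ becomes the hypersurface $F=z^2+y^3+y\phi_4+\phi_6=0$ in $\Aff^4_{x_1,x_2,y,z}$. Since $\partial F/\partial z=2z$, singular points satisfy $z=0$. So $P=(Q,y_0,0)$ is singular exactly when the following four conditions hold:
\begin{equation*}
g(y_0)=0,\qquad g'(y_0)=0,\qquad \partial_{x_i}\phi_4(Q)\,y_0+\partial_{x_i}\phi_6(Q)=0 \quad (i=1,2),
\end{equation*}
where $g(y):=y^3+\phi_4(Q)y+\phi_6(Q)$. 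The first two conditions say that $y_0$ is a multiple root of the cubic $g$, which happens if and only if $Q\in\Di$. This is consistent with Remark~\ref{rem:fiber}: the fiber $\tau^{-1}(Q)$ is singular at $(Q,y_0,0)$. For fixed $Q$ the multiple root is unique unless $g$ has a triple root, and a triple root means $\phi_4(Q)=\phi_6(Q)=0$ and $y_0=0$.

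\emph{Case $Q\notin\Lambda_6$.} Here $y_0\neq 0$. Indeed $y_0=0$ would force $\phi_6(Q)=g(0)=0$. Then $\phi_4(Q)\neq0$, since a triple root would force $\phi_6(Q)=0$, and $y_0=-3\phi_6(Q)/(2\phi_4(Q))$. Eliminating, $3y_0^2=-\phi_4$ together with $y_0^3+\phi_4y_0+\phi_6=0$ give $2\phi_4y_0+3\phi_6=0$. We then compare the remaining two conditions with the gradient of $D:=4\phi_4^3+27\phi_6^2$, namely $\nabla D=12\phi_4^2\nabla\phi_4+54\phi_6\nabla\phi_6$. Substituting $\phi_6=-\tfrac23\phi_4y_0$ and $\phi_4=-3y_0^2$ gives
\[
\nabla D=-36\phi_4y_0\bigl(y_0\nabla\phi_4+\nabla\phi_6\bigr).
\]
The prefactor $-36\phi_4y_0$ is nonzero, so $\nabla D(Q)=0$ exactly when the two remaining singularity conditions hold. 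Together with $D(Q)=0$ this is the statement $Q\in\Sing(\Di)$, computed scheme-theoretically with the equation $D$. Using the affine chart, and Euler's formula for the third partial derivative, this gives~\eqref{eq:singularities:n}.

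\emph{Case $Q\in\Lambda_6$.} Now $g(y)=y(y^2+\phi_4(Q))$. A multiple root exists only when either $y_0=0$ and $\phi_4(Q)=0$, or $\phi_4(Q)\neq0$ and $y_0^2=-\phi_4(Q)$. In the second subcase the elimination $2\phi_4y_0+3\phi_6=0$ still holds and gives $y_0=0$, a contradiction. So $y_0=0$ and $Q\in\Lambda_4\cap\Lambda_6$. The remaining conditions then reduce to $\nabla\phi_6(Q)=0$, which means $Q\in\Sing(\Lambda_6)$. This proves~\eqref{eq:singularities:3}. Conversely, any $Q\in\Lambda_4\cap\Sing(\Lambda_6)$ gives the singular point $(Q,0,0)$.

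The main obstacle is making the elimination identity for $\nabla D$ clean and being careful about which set-theoretic meaning of $\Sing(\Di)$ is intended. Since $\Di$ may be non-reduced, $\Sing(\Di)$ should mean the locus where $D$ and all its partial derivatives vanish. I would state this convention explicitly and check the chart-independence via Euler's identity.
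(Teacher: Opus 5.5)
Your proof is correct and follows essentially the same route as the paper. Both apply the Jacobian criterion to \eqref{eq:eq} and compare the result with the gradient of $4\phi_4^3+27\phi_6^2$. Over $Q\notin\Lambda_6$ the $y$-coordinate of the singular point is forced: you obtain it as the double root $-3\phi_6/(2\phi_4)$ of $y^3+\phi_4(Q)y+\phi_6(Q)$, and the paper writes it as $2\phi_4^2/(9\phi_6)$, which is the same value on $\Di$. Over $\Lambda_6$ the conditions collapse to $y=\phi_4=0$ and $\nabla\phi_6=0$. The only cosmetic difference is that you derive $\Sigma\subset\Di$ algebraically from the double-root condition, whereas the paper uses the remark that a smooth fiber forces $X$ to be smooth along it.
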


\begin{proof}
Take a point $Q\in B=\PP^2$. The fiber $\tau^{-1}(Q)$ is a the scheme intersection of two members of $|A_X|$.
Hence the smoothness of $\tau^{-1}(Q)$ implies the smoothness of~$X$ along $\tau^{-1}(Q)$.
Therefore,  $\Sigma\subset \Di$.

Note that 
the set $\Sing(X)$ is given by the equations
\begin{equation}
\label{eq:sing:n1}
\begin{array}{l}
z=3y^2+\phi_4=0,
\\
y\frac{\partial \phi_4} {\partial x_i}+\frac{\partial \phi_6} {\partial x_i}=0,
\quad i=1,\, 2,\, 3,
\end{array}
\end{equation} 
while the set $\Sing(\Di)$ is given by the equations
\begin{equation}
\label{eq:sing:n2}
\begin{array}{l}
2\phi_4^2 \frac{\partial \phi_4} {\partial x_i}+ 9\phi_6\frac{\partial \phi_6} {\partial x_i}=0\quad i=1,\, 2,\, 3.
\end{array}
\end{equation} 

To prove \eqref{eq:singularities:n} suppose that
$Q:=(a_1,a_2,a_3)$ is a solution of \eqref{eq:sing:n2} with $\phi_6(Q)\neq 0$. Then putting $b:=\frac{2\phi_4(Q)^2}{9\phi_6(Q)}$
we see that $(a_1,a_2,a_3,b,0)$ is a solution of \eqref{eq:sing:n1}.
Conversely, 
let $(a_1,a_2,a_3,b,0)$ be a solution of \eqref{eq:sing:n1} with $\phi_6(Q)\neq 0$, where $Q:=(a_1,a_2,a_3)$.
Then $4\phi_4(Q)^3+27 \phi_6(Q)^2=0$, since $\Sigma\subset \Di$, and $b^2=-\frac 13 \phi_4(Q)\neq 0$ by \eqref{eq:sing:n1}. 
Hence using \eqref{eq:eq} we obtain
\begin{equation*}
b= -\frac{3\phi_6(Q) }{2\phi_4(Q)}=
\frac{3\phi_6(Q) }{2\phi_4(Q)} \cdot \frac {4\phi_4(Q)^3}{27 \phi_6(Q)^2}=
\frac {2\phi_4(Q)^2}{9 \phi_6(Q)}.
\end{equation*}
One can see that $Q=(a_1,a_2,a_3)$ is a solution of \eqref{eq:sing:n2}. This proves 
\eqref{eq:singularities:n}\footnote{See also \cite[Lemma~5.11]{ACPS:DoubleVC}}.

To prove \eqref{eq:singularities:3} 
we note that $\Lambda_6\cap \Sigma$ is contained in $\Lambda_4$ (because $\Lambda_6\cap \Sigma\subset \Di$).
Then one can see from \eqref{eq:sing:n1} that $\Lambda_6\cap \Sigma$ is given by $\phi_6=\phi_4=y=z=\partial \phi_6/\partial x_i=0$.
\end{proof}

\begin{corollary}
\label{cor:sing:comp:Di}
Let~$X$ be a double Veronese cone.
Then the curve $\Di$ has no multiple components outside $\Lambda_6$.
\end{corollary}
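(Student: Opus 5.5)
The plan is to argue by contradiction, using Lemma~\ref{lemma:singularities} together with the fact that a double Veronese cone has only isolated (terminal) singularities. Suppose $\Di$ has a multiple component $\Gamma$ with $\Gamma\not\subset\Lambda_6$. Every point of a multiple component of a plane curve is a singular point of that curve, so $\Gamma\subset\Sing(\Di)$ set-theoretically. Then \eqref{eq:singularities:n} gives
\begin{equation*}
\Gamma\setminus\Lambda_6\subset \Sing(\Di)\setminus\Lambda_6=\Sigma\setminus\Lambda_6 .
\end{equation*}
Since $\Gamma\not\subset\Lambda_6$ and $\Lambda_6$ is a curve (we are assuming $\phi_6\neq 0$), the set $\Gamma\setminus\Lambda_6$ is infinite: it is $\Gamma$ minus finitely many points.

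On the other hand, $\Sing(X)$ is finite, because terminal Gorenstein threefold singularities are isolated \cite[Theorem~1.1]{Reid:MM}. Hence $\Sigma=\tau(\Sing(X))$ is finite, and this contradicts the previous paragraph. Note that $\tau$ is defined near $\Sing(X)$, since $O$ is a smooth point of $X$.

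The only point that needs care is that Lemma~\ref{lemma:singularities} is an equality of sets defined by the Jacobian-type equations \eqref{eq:sing:n1} and \eqref{eq:sing:n2}. I must check that $\Sing(\Di)$ in the lemma means the zero set of the partial derivatives of $F:=4\phi_4^3+27\phi_6^2$. This is what \eqref{eq:sing:n2} says, since $\partial F/\partial x_i=12\phi_4^2\,\partial\phi_4/\partial x_i+54\phi_6\,\partial\phi_6/\partial x_i$, which is $6$ times the expression in \eqref{eq:sing:n2}. If $F=g^2h$ with $\Gamma=\{g=0\}$, then every $\partial F/\partial x_i$ is divisible by $g$. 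So every point of $\Gamma$ satisfies \eqref{eq:sing:n2}, and the inclusion $\Gamma\subset\Sing(\Di)$ holds in exactly the sense used by the lemma. With that checked, the argument is complete; I expect no real obstacle.
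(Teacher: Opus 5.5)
Your proof is correct and follows the paper's argument. The paper states this corollary without proof, as an immediate consequence of \eqref{eq:singularities:n} together with the finiteness of $\Sing(X)$: these force $\Sing(\Di)\setminus\Lambda_6=\Sigma\setminus\Lambda_6$ to be finite. Your check that every point of a multiple component satisfies \eqref{eq:sing:n2} is exactly the detail needed to apply the lemma in the sense in which it is proved.
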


\begin{corollary}
\label{cor:w-blowup}
In the above notation, for any point $Q\in B$ either $\mult_Q(\Lambda_4)<4$ or $\mult_Q(\Lambda_6)<6$.
\end{corollary}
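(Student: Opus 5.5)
We argue by contradiction: assume that some point $Q\in B$ has $\mult_Q(\Lambda_4)\ge 4$ and $\mult_Q(\Lambda_6)\ge 6$, and show that $X$ then has a non-terminal (indeed non-canonical or non-isolated) singularity, or is not even a double Veronese cone.

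Choose coordinates with $Q=(0,0,1)$. Since $\phi_4$ and $\phi_6$ have degrees $4$ and $6$, the multiplicity conditions say exactly that $\phi_4=\phi_4(x_1,x_2)$ and $\phi_6=\phi_6(x_1,x_2)$ are binary forms in $x_1,x_2$ alone, with $x_3$ absent. (The case $\phi_4=0$ is allowed.) So the equation \eqref{eq:eq} does not involve $x_3$. Hence $X$ is a cone in $\PP(1^3,2,3)$ with vertex $P=(0,0,1,0,0)$, and $P\in X$.

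We first check whether $P$ is singular. In the affine chart $x_3=1$, the local equation at $P$ is $z^2+y^3+y\phi_4(x_1,x_2)+\phi_6(x_1,x_2)=0$ in $\Bbbk^4$. It has no linear terms, so $P$ is a singular point. The equation is quasi-homogeneous with weights $\wt(x_1,x_2,y,z)=(1,1,2,3)$ and degree $6$.

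Next we show that such a singularity is not terminal. For a hypersurface singularity that is non-degenerate with respect to weights $w$ of weighted degree $d$, the standard criterion (the weighted blow-up with these weights has discrepancy $\sum w_i - d - 1$) applies. Here the discrepancy is $1+1+2+3-6-1=0$. So the weighted blow-up produces an exceptional divisor of discrepancy $\le 0$, and $P$ is not terminal; it is a simple elliptic type singularity. Concretely, $a(E)=\sum w_i-1-\mathrm{wt}(f)\le 7-1-6=0$ holds for the weighted blow-up even without non-degeneracy, since $\mathrm{wt}(f)\ge 6$. This computation is the main technical step: we must make sure the exceptional divisor of the weighted blow-up is irreducible and reduced, so that the discrepancy formula is valid. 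If that fails, the singularity is even worse, because it is non-isolated or not normal.

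An alternative route is the following. If $\phi_4,\phi_6$ share a factor $\psi$ with $\psi^2\mid\phi_6$, then Lemma~\ref{lemma:sing0} gives non-isolated singularities, a contradiction. Otherwise the exceptional divisor $\{z^2+y^3+y\phi_4+\phi_6=0\}\subset\PP(1,1,2,3)$ is an irreducible surface, and the discrepancy computation above applies. Either way $X$ is not terminal at $P$, which contradicts the definition of a double Veronese cone.
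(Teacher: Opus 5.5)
Your proposal is correct and follows the paper's proof. Both arguments note that the hypothesis makes $\phi_4$ and $\phi_6$ binary forms in $x_1,x_2$, so the local equation at the point over $Q$ is quasi-homogeneous of weighted degree $6$ for weights $(1,1,2,3)$, and the corresponding weighted blow-up gives a divisor of discrepancy $7-1-6=0$, contradicting terminality. Your case analysis in the final paragraph is unnecessary: $z^2+y^3+y\phi_4+\phi_6$ is always irreducible and reduced, because the monic cubic in $y$ is never a square, and its zero locus in $\PP(1,1,2,3)$ misses both orbifold points.
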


\begin{proof}
Assume the converse. 
Then near the singular point of $P\in X$ over $Q$ the local equation of~$X$ has the form 
\begin{equation*}
\bar z^2+\bar y^3+\bar y\phi_4^{\bullet}(\bar x_1,\bar x_2)+\phi_6^{\bullet}(\bar x_1,\bar x_2)=0
\end{equation*}
with $\ord_0(\phi_6^{\bullet})=6$ and $\ord_0(\phi_4^{\bullet})=4$ (or $\phi_4^{\bullet}=0$).
In this case, 
the weighted blowup with weights $(1,1,2,3)$ produces an exceptional divisor of discrepancy $0$.
Hence, the singularity $P\in X$ is not terminal, a contradiction.
\end{proof}

\begin{lemma}
\label{lemma:singularities:6}
Let $Q\in \Sigma$ and let $P$ be the singular point of~$X$ lying on $\tau^{-1}(Q)$.
Assume that $Q\in \Lambda_4\cap\Lambda_6$.
Then the following assertions hold:
\begin{enumerate}
\item 
$P\in X$ is a node \textup(ordinary double point\textup) if and only if $\Lambda_4\neq\varnothing$,
$Q$ is a smooth point of $\Lambda_4$, it is a double point of $\Lambda_6$, 
the curves $\Lambda_4$ and $\Lambda_6$ have no common components passing through $Q$, and
the local intersection number of $\Lambda_4$ and $\Lambda_6$ at $Q$ equals $(\Lambda_4\cdot \Lambda_6)_Q=2$;
\item 
$P\in X$ is of type \type{cA}{n} if and only if either $\Lambda_4\neq\varnothing$ and $\Lambda_4$ is smooth at $Q$ 
or $Q$ is a double point of~$\Lambda_6$.
\end{enumerate}
\end{lemma}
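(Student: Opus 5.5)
Since $Q\in\Lambda_4\cap\Lambda_6$, the fibre $\tau^{-1}(Q)$ is cuspidal (Remark~\ref{rem:fiber}), and by the proof of Lemma~\ref{lemma:singularities} the singular point is $P=(Q,0,0)$, i.e.\ $y=z=0$ over $Q$. The plan is to pass to local analytic coordinates at $P$ and read off the type of singularity from the hypersurface equation. Choose affine coordinates $\bar x_1,\bar x_2$ on $B$ centred at $Q$, and write $f:=\phi_4^\bullet(\bar x_1,\bar x_2)$ and $g:=\phi_6^\bullet(\bar x_1,\bar x_2)$ for the dehomogenized local equations of $\Lambda_4$ and $\Lambda_6$. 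Then $P\in X$ is locally
\begin{equation*}
\bar z^2+\bar y^3+\bar y f+g=0,\qquad f(0)=g(0)=0,
\end{equation*}
and since $P$ is singular, Lemma~\ref{lemma:singularities} gives $g\in\mathfrak m^2$. If $\Lambda_4=\PP^2$, we read $f\equiv 0$. The whole argument then reduces to analysing the quadratic part of this equation.

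The quadratic part is $\bar z^2+\bar y\, f_1+g_2$, where $f_1$ is the linear part of $f$ and $g_2$ is the quadratic part of $g$. The case split is as follows.
\begin{enumerate}
\item If $f_1\ne0$, we may take $f_1=\bar x_1$. The quadratic form $\bar z^2+\bar y\bar x_1+g_2(\bar x_1,\bar x_2)$ has rank $\ge 3$. It has rank $4$ exactly when the coefficient of $\bar x_2^2$ in $g_2$ is nonzero. This coefficient vanishes precisely when $g_2|_{\{\bar x_1=0\}}=0$, i.e.\ when $\bar x_1\mid g_2$.
\item If $f_1=0$, the quadratic part is $\bar z^2+g_2$, which has rank $\ge 2$ exactly when $g_2\ne0$.
\end{enumerate}
We then invoke the standard facts: a hypersurface double point is a node iff its quadratic part has rank $4$, and it is of type \type{cA}{n} iff the rank is $\ge 2$. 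The latter holds because a general hyperplane section then has a quadratic part of rank $\ge 2$, hence is of type \type{A}{n}; if the rank is $1$, the section is \type{D}{} or \type{E}{}, or non-isolated.

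For (ii), these cases give: the point is \type{cA}{} iff either $f_1\ne 0$ (that is, $\Lambda_4\ne\PP^2$ and $\Lambda_4$ is smooth at $Q$) or $g_2\ne0$ (that is, $Q$ is a double point of $\Lambda_6$). This is exactly statement (ii).

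For (i), we need a node to correspond to $f_1\neq0$, $g_2\neq 0$, and $\bar x_1\nmid g_2$. We must translate this into the stated intersection-theoretic conditions. The task is to show the following are equivalent:
\begin{itemize}
\item[(a)] $\Lambda_4$ is smooth at $Q$ with tangent line $\bar x_1=0$, $\mult_Q\Lambda_6=2$, and the restriction $g_2(0,\bar x_2)\neq0$;
\item[(b)] $\Lambda_4$ is smooth at $Q$, $\Lambda_6$ has a double point there, there are no common components through $Q$, and $(\Lambda_4\cdot\Lambda_6)_Q=2$.
\end{itemize}
Since $\Lambda_4$ is smooth at $Q$, we can parametrize it by $\bar x_1=h(\bar x_2)$ with $h\in\mathfrak m^2$. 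Then $(\Lambda_4\cdot\Lambda_6)_Q=\ord_{\bar x_2}\, g(h(\bar x_2),\bar x_2)$, and the $\bar x_2^2$-coefficient of this power series equals that of $g_2(0,\bar x_2)$. So $g_2(0,\bar x_2)\ne0$ is equivalent to the intersection number being exactly $2$ (which is the minimum possible given $g\in\mathfrak m^2$), and this in turn forces the absence of common components. The converse direction is identical, so no further work is needed there.

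I expect the main subtlety to be making sure that the quadratic part is correctly computed. We must check that no coordinate change involving $\bar y$ (for example, completing $\bar y f$ against higher-order terms) alters the rank. This is harmless because rank is an invariant of the $2$-jet. We must also handle the degenerate case $\Lambda_4=\PP^2$, where $f\equiv0$: there the point is never a node, and it is \type{cA}{} iff $g_2\ne0$, which is consistent with the statement once we read "$\Lambda_4\ne\varnothing$" as excluding this degenerate case.
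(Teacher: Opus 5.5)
Your proposal is correct and follows the paper's own proof. The paper likewise normalizes $\phi_4^\bullet=2a\bar x_1+\dots$ and computes the determinant $-a^2b_{2,2}$ of the quadratic part to detect nodes. It then notes that the point fails to be of type $\mathrm{cA}$ exactly when the quadratic part reduces to $\bar z^2$. Your parametrization argument, showing that $b_{2,2}\neq 0$ is equivalent to $(\Lambda_4\cdot\Lambda_6)_Q=2$, supplies the translation into intersection-theoretic terms that the paper leaves implicit.
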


\begin{proof}
We may assume that 
$Q=(0,0,1)$ and in the affine chart $U=\{x_3\neq 0\}\simeq \Aff^4_{\bar x_1,\bar x_2,\bar y,\bar z}\subset \PP(1^3,2,3)$ the equation of~$X$ has the form
\begin{equation}
\label{eq:eq-chart}
\bar z^2+\bar y^3+\bar y\phi_4^{\bullet}(\bar x_1,\bar x_2)+\phi_6^{\bullet}(\bar x_1,\bar x_2)=0,
\end{equation} 
where $P$ is the origin, $\phi_4^{\bullet}=\phi_4(\bar x_1,\bar x_2,1)$, and $\phi_6^{\bullet}=\phi_6(\bar x_1,\bar x_2,1)$.
Since $P\in X$ is singular, \eqref{eq:eq-chart} has no linear terms.
By a linear coordinate change we may assume that
\begin{equation*}
\phi_4^{\bullet}=2ax_1+(\text{higher degree terms}),
\quad
\phi_6^{\bullet}=b_{1,1}x_1^2+2b_{1,2}x_1x_2+b_{2,2}x_2^2+(\text{higher degree terms}). 
\end{equation*}
The point $P\in X$ is not a node if and only if the quadratic part of \eqref{eq:eq-chart} is degenerate, i.e.
\begin{equation*}
\begin{vmatrix}
0 &a&0
\\
a& b_{1,1} &b_{1,2}
\\
0& b_{1,2} & b_{2,2}
\end{vmatrix} 
= -a^2b_{2,2}= 0.
\end{equation*}
The point $P\in X$ is not of type \type{cA}{} if and only if $a=b_{1,1}=b_{1,2}=b_{2,2}=0$.
This proves the lemma.
\end{proof}

We say that a double Veronese cone is \emph{nodal} if 
its singularities are nodes.

\begin{corollary}
\label{cor:nodal}
Let~$X$ be a double Veronese cone.
Assume that~$X$ is singular and nodal.
Then the following holds:
\begin{enumerate}

\item 
\label{cor:nodal:0} 
$\Lambda_4\neq\PP^2$;

\item 
\label{cor:nodal:CC} 
$\Lambda_4$ and $\Lambda_6$ have no common components;

\item 
\label{cor:nodal:1} 
if $Q\in \Lambda_4$ is a singular point, then it does not lie on $\Lambda_6$;

\item 
\label{cor:nodal:Lambda4} 
$\Lambda_4$ has no multiple components,
\item 
\label{cor:nodal:Lambda6} 
$\Lambda_6$ has no components of multiplicity $>2$.
\end{enumerate}
\end{corollary}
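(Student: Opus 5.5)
The strategy is to derive each item from Lemma~\ref{lemma:singularities:6}~(i) together with Lemma~\ref{lemma:singularities}, using that $\Lambda_6$ is a sextic curve (since $\phi_6\ne0$). The recurring device is this: given a suspected degeneracy, locate a point $Q\in\Sigma\cap\Lambda_4\cap\Lambda_6$ at which the conditions of Lemma~\ref{lemma:singularities:6}~(i) fail. Its preimage in $\Sing(X)$ would then not be a node, which is a contradiction. A second tool is Lemma~\ref{lemma:sing0}, via Corollary~\ref{cor:sing:comp}. It applies because nodal singularities are isolated.

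\textbf{Item \ref{cor:nodal:0}.} If $\Lambda_4=\PP^2$, then $\phi_4=0$ and Lemma~\ref{lemma:singularities:6} shows no point of $\Sigma\cap\Lambda_6$ can be a node. So every singular point lies over $\Sigma\setminus\Lambda_6=\Sing(\Di)\setminus\Lambda_6$. But $\Di=\{27\phi_6^2=0\}$, so $\Di_{\red}=(\Lambda_6)_{\red}$ and this set is empty. Hence $X$ would be smooth, a contradiction.

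\textbf{Item \ref{cor:nodal:CC}.} Let $\Gamma$ be a common component of $\Lambda_4$ and $\Lambda_6$. By Corollary~\ref{cor:sing:comp}\ref{cor:sing:comp:-=}, $\Gamma$ is a reduced component of $\Lambda_6$. The key step is to show that $\Gamma\cap\Sing(\Lambda_6)\ne\varnothing$. If $\Gamma=\Lambda_6$, then $\Lambda_6$ would be an irreducible reduced sextic and $\phi_6\mid\phi_4$ is impossible by degree. So $\Lambda_6$ has another component, which meets $\Gamma$ in $\PP^2$ at some point $Q\in\Sing(\Lambda_6)\cap\Lambda_4$. By \eqref{eq:singularities:3}, $Q\in\Sigma$, but Lemma~\ref{lemma:singularities:6}~(i) forbids common components through $Q$. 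This is the step I expect to need most care, since one must make sure a singular point really exists on $\Gamma$.

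\textbf{Item \ref{cor:nodal:1}.} Suppose $Q\in\Sing(\Lambda_4)\cap\Lambda_6$. If $Q\in\Sing(\Lambda_6)$, then $Q\in\Sigma$ by \eqref{eq:singularities:3}, contradicting the smoothness of $\Lambda_4$ at $Q$ required in Lemma~\ref{lemma:singularities:6}~(i). If instead $\Lambda_6$ is smooth at $Q$, I would examine $\Di$ directly. Locally at $Q$, $\phi_4^3$ vanishes to order at least $6$ and $\phi_6^2$ vanishes to order exactly $2$. Hence $\Di$ is locally $\phi_6^2\cdot(\text{unit})$ plus a term of higher order, and I expect this to contradict Corollary~\ref{cor:sing:comp:Di} or give a non-node by a direct local computation as in the proof of Lemma~\ref{lemma:singularities:6}. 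The local equation is $\bar z^2+\bar y^3+\bar y\phi_4^\bullet+\phi_6^\bullet$ with $\phi_6^\bullet$ having a linear term, so $X$ is smooth at the relevant point. The cleaner route is therefore to show there is no singular point to worry about and to obtain the contradiction through $\Sigma$. I would settle this case by checking it via \eqref{eq:sing:n1}.

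\textbf{Items \ref{cor:nodal:Lambda4} and \ref{cor:nodal:Lambda6}.} For \ref{cor:nodal:Lambda4}, a multiple component $\Gamma$ of $\Lambda_4$ consists of singular points of $\Lambda_4$. It meets the sextic $\Lambda_6$, contradicting \ref{cor:nodal:1}. For \ref{cor:nodal:Lambda6}, let $\Gamma$ be a component of $\Lambda_6$ of multiplicity at least $3$. Then every point of $\Gamma$ lies in $\Sing(\Lambda_6)$, and $\Gamma\cap\Lambda_4\ne\varnothing$ since $\Lambda_4$ is a curve by \ref{cor:nodal:0}. At a point of $\Gamma\cap\Lambda_4$, \eqref{eq:singularities:3} gives a point of $\Sigma$ where $\mult_Q\Lambda_6\ge3$. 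This contradicts the requirement in Lemma~\ref{lemma:singularities:6}~(i) that $Q$ be a double point of $\Lambda_6$.
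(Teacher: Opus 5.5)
Your arguments for \ref{cor:nodal:0}, \ref{cor:nodal:CC} and \ref{cor:nodal:Lambda6} are correct. So is the first case of \ref{cor:nodal:1}, where $Q\in\Sing(\Lambda_6)$ and hence $Q\in\Sigma$ by \eqref{eq:singularities:3}. These are exactly the deductions from Lemma~\ref{lemma:singularities:6} and \eqref{eq:singularities:3} that the paper leaves implicit; it gives no separate proof of the corollary.

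The gap is the second case of \ref{cor:nodal:1}, where $\Lambda_6$ is smooth at $Q$. Item \ref{cor:nodal:Lambda4} inherits it, because a multiple component of $\Lambda_4$ may meet $\Lambda_6$ only at smooth points of $\Lambda_6$. This case cannot be closed. As you noticed, $\partial\phi_6(Q)\neq 0$ makes $X$ smooth at the cusp of $\tau^{-1}(Q)$, so $Q\notin\Sigma$ and Lemma~\ref{lemma:singularities:6} says nothing there. There is no ``contradiction through $\Sigma$'' to be had. Corollary~\ref{cor:sing:comp:Di} does not help either, since it only concerns components of $\Di$ off $\Lambda_6$, while $Q\in\Lambda_6$.

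In fact \ref{cor:nodal:1} and \ref{cor:nodal:Lambda4} are false as literally stated. Take $\phi_4=x_1^2(x_1x_2+x_2^2+x_3^2)$ and let $\phi_6$ be a general sextic singular at $Q'=(1:0:0)$.
\begin{itemize}
\item The linear system spanned by $z^2+y^3+y\phi_4$ and all sextics singular at $Q'$ has base locus $\tau^{-1}(Q')\cup\{O\}$. By Bertini, $X$ is smooth off this set. It also misses the two orbifold points.
\item Along $\tau^{-1}(Q')$ we have $z^2+y^3=0$, and the $y$-derivative $3y^2+\phi_4$ of the left-hand side of \eqref{eq:eq} equals $3y^2\neq 0$ except at $P'=(1,0,0,0,0)$. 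At $O$ the $z$-derivative is nonzero.
\item At $P'$ the point is a node by Lemma~\ref{lemma:singularities:6}. Indeed, $\Lambda_4$ is smooth at $Q'$ with tangent line $x_2=0$, and the coefficient of $x_1^4x_3^2$ in $\phi_6$ is nonzero, so $(\Lambda_4\cdot\Lambda_6)_{Q'}=2$.
\end{itemize}
Thus $X$ is a double Veronese cone with exactly one node. Yet $\Lambda_4$ contains the double line $\{x_1=0\}$, and the points of $\{x_1=0\}\cap\Lambda_6$ are singular points of $\Lambda_4$ lying on $\Lambda_6$.

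What your first case does prove is $\Sing(\Lambda_4)\cap\Sing(\Lambda_6)=\varnothing$, i.e.\ \ref{cor:nodal:1} for $Q\in\Sigma$. For \ref{cor:nodal:Lambda4} it gives only that a multiple component of $\Lambda_4$ avoids $\Sing(\Lambda_6)$. The later applications of \ref{cor:nodal:1} in Section~\ref{sec:aut} involve points that are singular on both curves, so they need only this weaker form.
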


\begin{proposition}
\label{prop:finite}
Let~$X$ be a double Veronese cone.
Then there exists at most  finitely many  members $S\in |A_X|$ whose singularities are worse than Du Val.
\end{proposition}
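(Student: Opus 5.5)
We plan to reduce the statement to a finiteness question about lines in $B=\PP^2$. Members $S\in|A_X|$ correspond to lines $L=\{l=0\}\subset B$, and $S=X\cap\{l=0\}$ is the hypersurface
\begin{equation*}
z^2+y^3+y\,\phi_4|_L+\phi_6|_L=0\subset\PP(1,1,2,3).
\end{equation*}
Here $\phi_4|_L$ and $\phi_6|_L$ are binary forms of degrees $4$ and $6$ (possibly zero).

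\textbf{Step 1 (local criterion).} By Theorem~\ref{thm:preV1}, $S$ is smooth at $O$, so every singular point of $S$ lies over some $Q\in L$. In a local chart centered at $Q$ the surface $S$ is a Weierstrass surface $\bar z^2+\bar y^3+\bar y a(t)+b(t)=0$, where $a,b$ are the restrictions of $\phi_4^{\bullet},\phi_6^{\bullet}$ to $L$. We will use the standard (Tate-type) fact that such a singularity is Du Val unless $\ord_Q a\ge4$ and $\ord_Q b\ge6$. In that case the weighted blowup with weights $(1,2,3)$ gives discrepancy $\le0$, exactly as in the proof of Corollary~\ref{cor:w-blowup}. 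This condition also covers the non-normal case of Proposition~\ref{prop:hyp-sect}, where the orders are infinite along a curve. Hence $S$ is worse than Du Val only if there is $Q\in L$ with
\begin{equation*}
(L\cdot\Lambda_6)_Q\ge6 \quad\text{and}\quad (L\cdot\Lambda_4)_Q\ge4,
\end{equation*}
allowing $L\subset\Lambda_6$ or $L\subset\Lambda_4$ (or $\phi_4=0$). Since $\deg\phi_6=6$ and $\deg\phi_4=4$, this means: either $L$ is a component of $\Lambda_6$ (finitely many such $L$), or $L\cap\Lambda_6=\{Q\}$ set-theoretically with full multiplicity $6$, and likewise $L\cap\Lambda_4\subset\{Q\}$ unless $L\subset\Lambda_4$ or $\Lambda_4=\PP^2$.

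\textbf{Step 2 (infinitely many such lines).} Suppose there is a one-parameter family $L_t$ of such lines with corresponding points $Q_t$.

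If $Q_t=Q$ is constant, then infinitely many lines through $Q$ meet $\Lambda_6$ at $Q$ with multiplicity $6$. Only the finitely many lines of the tangent cone can have intersection multiplicity exceeding $\mult_Q$, so $\mult_Q\Lambda_6=6$. By the same argument, either $\mult_Q\Lambda_4\ge4$ or $\phi_4=0$. This contradicts Corollary~\ref{cor:w-blowup}.

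If $Q_t$ moves, then every component $\Gamma$ of $(\Lambda_6)_{\red}$ meets $L_t$, and only at $Q_t$. Hence $Q_t\in\Gamma$ for all components $\Gamma$, so $(\Lambda_6)_{\red}=\Gamma$ is irreducible of some degree $d$. Moreover, $L_t$ meets $\Gamma$ only at the moving point $Q_t$ with multiplicity $d$.
\begin{itemize}
\item In characteristic $0$ a reduced irreducible curve has only finitely many flexes, so a moving intersection multiplicity $\ge3$ is impossible for $d\ge3$.
\item If $d=2$, then $\Gamma$ is a conic $\{q=0\}$, $\phi_6=cq^3$, and the $L_t$ are its tangent lines. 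Applying the same reasoning to $\Lambda_4$ (which must meet each $L_t$ only at $Q_t$, with multiplicity $4$) gives $\phi_4=c'q^2$ or $\phi_4=0$.
\item If $d=1$, then $\phi_6=c\,l_0^6$, and similarly $\phi_4=c'l_0^4$ or $\phi_4=0$.
\end{itemize}
In both remaining cases $\psi:=q$ (resp.\ $l_0$) divides $\phi_4$ and $\psi^2$ divides $\phi_6$. By Lemma~\ref{lemma:sing0} the singularities of $X$ are then non-isolated, contradicting terminality. Hence only finitely many lines $L$ occur, which proves the proposition.

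\textbf{Main obstacle.} I expect the hardest step to be Step~1: verifying cleanly that the local Du Val criterion for the Weierstrass model holds uniformly, including at points of $S$ that are singular points of $X$ and in the degenerate (non-normal or generalized cone) cases. I would first try to handle all of these by the weighted-blowup discrepancy computation, as in the proof of Corollary~\ref{cor:w-blowup}, combined with the known classification of Weierstrass singularities.
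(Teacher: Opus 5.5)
Your Step~1 is false as stated, and it fails in exactly the case you say it ``also covers''. The criterion that a Weierstrass surface $\bar z^2+\bar y^3+\bar y a(t)+b(t)=0$ has at worst a rational double point over $t=0$ unless $\ord_0 a\ge 4$ and $\ord_0 b\ge 6$ holds only when the generic fibre is smooth, i.e.\ $4a^3+27b^2\not\equiv 0$. That hypothesis is what makes the branch curve $\bar y^3+a\bar y+b=0$ reduced at the singular point, and the ADE classification behind the criterion needs this. The weighted blowup you mention only proves the converse implication.

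Now take a line $L\subset\Di$ with $L\not\subset\Lambda_6$. Then $\phi_4|_L$ and $\phi_6|_L$ are both nonzero and $4(\phi_4|_L)^3+27(\phi_6|_L)^2=0$. This forces $\phi_4|_L=-3u^2$ and $\phi_6|_L=2u^3$ for some binary quadratic form $u$. So $S$ is $z^2+(y-u)^2(y+2u)=0$, whose equation lies in $(z,\,y-u)^2$. Hence $S$ is singular along the curve $\{z=y-u=0\}$ and is non-normal. If $u$ has two distinct roots, then $\ord_Q(\phi_4|_L)\le 2$ and $\ord_Q(\phi_6|_L)\le 3$ at every $Q\in L$, so your criterion would wrongly declare $S$ Du Val. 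Non-normal members are therefore not characterized by ``infinite orders''; that is only the subcase $u=0$. They are exactly the members lying over lines contained in $\Di$, which is how the paper disposes of them in its first step.

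The repair is one line. Since $\Di$ is a plane curve, discard at the outset the finitely many lines contained in $\Di$, together with those in $\Lambda_6$, and those in $\Lambda_4$ if $\Lambda_4\neq\PP^2$. For the remaining lines, $S\to L$ has smooth generic fibre and your local criterion is valid. Step~2 then goes through as written:
\begin{itemize}
\item a fixed point $Q$ contradicts Corollary~\ref{cor:w-blowup};
\item a moving point forces, by finiteness of flexes, $(\Lambda_6)_{\red}$ to be a line or a conic that also supports $\Lambda_4$, contradicting Lemma~\ref{lemma:sing0}.
\end{itemize}

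With this fix your argument is a valid alternative to the paper's, and the routes genuinely differ. The paper uses the classification in Proposition~\ref{prop:hyp-sect} to reduce to generalized cones. It then observes that $\tau(S)$ meets $\Di$ in a single point. In the hardest configuration, where all components of $\Di_{\red}$ are lines through that point, it uses constancy of the $j$-invariant along $\tau(S)$ and the pencil $\langle 3\Lambda_4,2\Lambda_6\rangle$.

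Your route replaces the Hidaka--Watanabe classification and the $j$-invariant argument with a local Tate-type criterion. This turns the problem directly into contact conditions of $L$ with $\Lambda_4$ and $\Lambda_6$, and yields an explicit description of the bad lines. The price is that you must quote that criterion in its correct form, including the hypothesis $L\not\subset\Di$.
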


\begin{proof}
Let $S\in |A_X|$ be a surface whose singularities are worse than Du Val
and let $l:=\tau(S)$.
To prove the assertion we may assume that~$l$ is not a component of $\Di$. 
If~$S$ is not normal, then its non-normal locus is a curve not contained in a fiber of $\tau$.
Hence
a general fiber of $\tau_S: S\to l$ is singular, 
which implies that~$l$ is a component of~$\Di$, a contradiction.

Thus we may assume that~$S$ is normal.
Then 
$S$ is a generalized cone over an elliptic curve~$C$ by Proposition~\ref{prop:hyp-sect}.
Let $P\in S$ be the vertex of the cone and let $Q=\tau(P)$.
Then the induced fibration $\tau_S: S\to l$ has exactly one singular fiber $\tau^{-1}(Q)$.
Therefore, the line~$l$ meets the discriminant $\Di$ at a single point $Q$.
If $Q$ is a smooth point of $\Di_{\red}$, then it is a flex point
and~$l$ is tangent to $\Di_{\red}$ at it.
There are only  finite number of such choices for the line~$l$.
Thus we may assume that $Q$ is a singular point of $\Di_{\red}$.
Since $l\cap \Di_{\red}=\{Q\}$, again there are only  
finite number of lines~$l$ provided
the multiplicity of the point $Q\in \Di_{\red}$ is less than $\deg \Di_{\red}$.
Thus we are only left with the case where $Q\in \Di_{\red}$ is a point of multiplicity $\deg \Di_{\red}$,
that is, $\Di_{\red}$ is a union of lines passing through $Q$.
By Corollary~\ref{cor:w-blowup} we have $\Lambda_4\neq\PP^2$, hence $\Supp(\Di)\neq \Supp(\Lambda_6)$.
Since~$S$ is a generalized cone, 
a general fiber of $\tau_S: S \dashrightarrow l$
is isomorphic to our elliptic curve~$C$. 
Therefore, the absolute invariant $\jj:B \dashrightarrow \PP^1$ is constant along the line $l\subset B=\PP^2$.
By \eqref{eq:j} this implies that for some $\lambda$ the polynomial $\phi_4^3+\lambda \phi_6^2$ vanishes along~$l$.
In other words,~$l$ is a component of a member of the pencil $\langle 3\Lambda_4,\, 2\Lambda_6\rangle$.
By Corollary~\ref{cor:sing:comp} the pencil $\langle 3\Lambda_4,\, 2\Lambda_6\rangle$
has no fixed components. But then its general member is irreducible and the line~$l$ can lie only in a finite number  
of reducible fibers.
\end{proof}

\section{Lines}
\label{sec:lines}

Throughout this section we assume that~$X$ is a del Pezzo threefold of arbitrary degree $d:=A_X^3$.
We are interested in the geometry of curves of degree~$1$ with respect to $A_X$, i.e. 
curves $L\subset X$ with $A_X\cdot L=1$.
Let $L\subset X$ be such a curve.
Clearly, $L$ is irreducible.
If $d\ge 3$, then $A_X$ is very ample (see~\cite{Shin1989}) and so $L$ is a usual projective line under the embedding given by $|A_X|$.
If $d=2$, then $|A_X|$ is base point free and defines a double cover $\tau:X\to \PP^3$ (again by~\cite{Shin1989})
so that $\tau_L: L\to \tau(L)$ is birational.
In this case $\tau(L)\subset \PP^3$ must be a line, hence $L$ is smooth and rational.
In the case $d=1$ the situation is a little more complicated:

\begin{lemma}
\label{lemma:L}
Let~$X$ be a double Veronese cone and let $L\subset X$ be a curve such that $A_X\cdot L=1$. Then one of the following holds:
\begin{enumerate}

\item
$L$ does not pass through the base point $O$ of $|A_X|$ and $L$ is a smooth rational curve;

\item
$L\ni O$ and $L$ is a fiber of the map $\tau: X \dashrightarrow \PP^2$ given by $|A_X|$; in particular, $\p(L)=1$.
\end{enumerate} 
\end{lemma}

\begin{proof}
If $L\not\ni O$, then $\tau$ is defined near $L$, the image $\tau(L)\subset \PP^2$ is a line, and the restriction $\tau_L:L\to \tau (L)$
is birational. 
As above, we obtain that $L$ is a smooth rational curve.
Assume that $L\ni O$. Since $A_X\cdot L=1$, there are two distinct members of $S_1,\, S_2\in |A_X|$ containing $L$.
Then $L=S_1\cap S_2$ and $\p(L)=1$ by the adjunction formula.
\end{proof}

A \textit{line} on a del Pezzo threefold~$X$ is a smooth rational curve $L$ such that $A_X\cdot L=1$.
Since $\dim |A_X|=d+1\ge 2$, any line $L\subset X$ is contained in a surface $S\in |A_X|$.
If $d=1$, such a surface is unique and coincides with $\tau^{-1}(\tau(L))$.

\begin{example}
Let~$X$ be a double Veronese cone and let $S\in |A_X|$ be any member. 
Then the family of lines lying on~$S$ has dimension $2$, $1$, and $\le 0$ 
in the cases 
\ref{prop:hyp-sect-c} \ref{prop:hyp-sect-b} \ref{prop:hyp-sect-a} of Proposition~\ref{prop:hyp-sect}, respectively.
\end{example}

A \textit{plane} on~$X$ is a surface $\Pi$ such that $A_X^2\cdot \Pi=1$ and in the case $d=1$ the base point $O$ does not lie on $\Pi$.

The main result of this section if the following
\begin{theorem}
\label{thm:lines}
Let~$X$ be a del Pezzo threefold. Then there exists a dense Zariski open subset $U\subset X$ such that
for any point $P\in U$ there exists a line passing through $P$ and lying in the smooth locus of~$X$.
\end{theorem}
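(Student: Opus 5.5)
Since $\Sing(X)$ is a finite set, the line through a general point will avoid it automatically as long as we can control the whole family of lines. So the plan is to build, in every degree $d=A_X^3$, a family of lines sweeping out a dense subset of $X$, and then check that the lines through a general point do not all meet $\Sing(X)$.

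For $d=1$ we use $|A_X|$ and Proposition~\ref{prop:finite}.
\begin{enumerate}
\item Take a general point $P\in X$. The members of $|A_X|$ through $P$ form a pencil, namely the preimages $\tau^{-1}(l)$ of the lines $l\ni\tau(P)$ in $B=\PP^2$.
\item By Proposition~\ref{prop:finite}, for $P$ outside finitely many surfaces, a general $S$ in this pencil is a Du Val del Pezzo surface of degree~$1$ through $P$.
\item Such an $S$ contains only finitely many $(-1)$-curves, i.e. curves $L$ with $-K_S\cdot L=A_X\cdot L=1$.
\item Let $S$ vary over the $2$-dimensional family $|A_X|$ and take the union $W$ of all these curves. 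We must show $W$ is dense.
\item Consider the incidence variety of pairs $(S,L)$. Its dimension is $2$, and the fibres of $L\mapsto L$ are points because $S=\tau^{-1}(\tau(L))$ is unique. So the lines form a $2$-dimensional family.
\item Density of $W$ then follows once each line moves. A line does not lie in a fixed surface as $S$ varies, since distinct $S$ meet only along fibres of $\tau$, which pass through $O$ and are not lines.
\end{enumerate}
Hence a general point lies on a line. For a Du Val $S$ through a general point we may take $S$ smooth, because a general member is smooth by Theorem~\ref{thm:preV1}. The finitely many points of $\Sing(X)$ lie on only finitely many members of each pencil, so the chosen lines avoid $\Sing(X)$.

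For $d\ge2$ the argument is easier and follows the same pattern, using the classical models from \cite{Shin1989}.
\begin{enumerate}
\item For $d\ge3$, $X$ is anticanonically a degree-$d$ variety in $\PP^{d+1}$ cut by quadrics (or a cubic for $d=3$). The lines through a general point form a nonempty finite family: intersect with a general hyperplane section surface $S$ through $P$, a Du Val del Pezzo surface, and pass through $P$ via the $3$-dimensional family of sections.
\item A more uniform way, also valid for $d=2$, is to vary $S\in|A_X|$. Each smooth $S$ carries finitely many $(-1)$-curves, and these are lines on $X$.
\item The incidence argument then gives a family of lines of dimension $\dim|A_X|-1=d$ when each line lies in a $1$-dimensional family of sections. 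This is exactly where $d=1$ differs.
\item The union of these lines is at least $2$-dimensional. We need dimension $3$: lines must not all lie in a fixed divisor.
\end{enumerate}

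The main obstacle is this last density step. We must exclude that all lines on all members of $|A_X|$ lie in one divisor $D$. If they did, then for a general $S$ every $(-1)$-curve of $S$ would lie in the curve $D\cap S$, a member of $|D|_S|$. But the $(-1)$-curves of a smooth del Pezzo surface of degree $d\le 7$ span the Picard lattice and their sum is a large multiple of $-K_S$. This gives a bound on $D\cdot A_X^2$ that contradicts the finiteness of $D$ as the surfaces vary.

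A cleaner route, which I would try first, is to take a smooth general $S$ and a $(-1)$-curve $L\subset S$ avoiding $\Sing(X)$. Compute $N_{L/X}\simeq\OOO(-1)\oplus\OOO(1)$ from the sequence
\[
0\to N_{L/S}\to N_{L/X}\to N_{S/X}|_L\to0
\]
together with $N_{L/S}=\OOO(-1)$ and $N_{S/X}|_L=\OOO(1)$. This splitting is automatic because $\mathrm{Ext}^1(\OOO(1),\OOO(-1))=H^1(\OOO(-2))\neq0$ could a priori prevent it, but in either case $h^0(N_{L/X})=2$ and $h^1=0$. So lines are unobstructed and deform in a $2$-dimensional family. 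The sections of $N$ move $L$ off $S$, and standard bend-and-break/deformation theory shows the deformations sweep a dense set.

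Once density holds, the lines through a general $P$ avoid the finite set $\Sing(X)$. Indeed, only a $1$-dimensional subfamily of lines meets a given point, and these sweep at most a surface, which the general point $P$ avoids.
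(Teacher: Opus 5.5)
Your proposal has a genuine gap exactly at the step you yourself flag: nothing in it shows that the lines sweep out a dense subset of $X$, and both arguments you offer for this fail.

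For $d=1$, step~6 only shows that distinct members of $|A_X|$ share no line. That is, each line lies in a unique $S$, so the lines form a $2$-dimensional family. But a $2$-dimensional family of curves on a threefold can fill only a surface. This really happens. If $X$ contains a plane $\Pi$ (e.g.\ $z^2+y^3+\phi_4y-\phi_3^2=0$ contains $\{y=z\pm\phi_3=0\}$), then for every smooth $S\in|A_X|$ the curve $\Pi\cap S$ is a $(-1)$-curve of $S$ lying in no other member of $|A_X|$. Your reasoning applies verbatim to this subfamily, yet all these curves lie in $\Pi$.

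The same example defeats the ``cleaner route''. For $L=\Pi\cap S$ one has $\NNN_{L/X}\simeq\OOO_L(1)\oplus\OOO_L(-1)$, with positive summand $\NNN_{L/\Pi}$, because $\Pi\cdot L=\langle\Pi,\Pi\rangle=-1$. So $h^0=2$, $h^1=0$, and the sections do move $L$ off $S$, but only inside $\Pi$. Unobstructedness gives a $2$-dimensional family. That family dominates $X$ only if $\NNN_{L/X}\simeq\OOO_L\oplus\OOO_L$ for a general member, i.e.\ only if $0\to\OOO_L(-1)\to\NNN_{L/X}\to\OOO_L(1)\to0$ does \emph{not} split. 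So your remark that the splitting is ``automatic'' points the wrong way, and proving non-splitting amounts to the statement itself; bend-and-break plays no role.

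The other arguments have the same problem:
\begin{itemize}
\item The sum of the $(-1)$-curves of $S$ only gives a lower bound on $D\cdot A_X^2$, which contradicts nothing.
\item For $d\ge3$, ``the lines through a general point form a nonempty finite family'' is essentially the claim restated.
\item The family of lines is $2$-dimensional for every $d$, not $d$-dimensional, since a line lies in a $(d-1)$-dimensional subsystem of $|A_X|$.
\end{itemize}

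What is missing is an argument excluding that the lines fill only a surface $R$, and the example shows that such an argument must see the planes of $X$. The paper classifies such $R$. Via $|2A_X|$ (for $d=1$, via the double cover $\pi$ of the Veronese cone) lines become conics, so $R$ maps onto a Veronese surface or a projection of it and has degree at most $4$. The three cases are then handled as follows:
\begin{itemize}
\item For $\rr(X)=1$, $d\ge2$, this contradicts $R\sim kA_X$, $\deg R=4kd\ge8$.
\item For $d=1$, it forces $R$ to be a plane or a non-normal member of $|A_X|$ (Claim~\ref{claim:bVc}). These are ruled out by $\QQ$-factoriality and by taking $\tau(L)\not\subset\Di$.
\item For $\rr(X)>1$, the paper passes to a $\QQ$-factorialization and runs descending induction on $d$ through the extremal contraction of Construction~\ref{construction}.
\end{itemize}

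Granting Claim~\ref{claim:bVc}, your own $d=1$ set-up could also be finished by counting. Suppose the lines on smooth members did not fill $X$. Then each of the $240$ lines on a general smooth $S$ would lie in a plane, since a non-normal member meets $S$ in a fibre of $\tau$, which contains no line. A plane cuts $S$ in a single line, so $X$ would contain at least $240$ planes. But planes are in bijection with $\boldsymbol{\Delta}(X)$, which has at most $126$ elements by Table~\ref{tab}.
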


\begin{remark}
If $L$ is a line corresponding to a general point of the covering family
(in particular, contained in the smooth locus of~$X$), then
the same arguments as in the proof of \cite[Ch.~III, Prop.~1.3]{Isk:anti-e} or \cite[Lemma~2.2.6]{KPS:Hilb} show that the normal bundle of $L$ is trivial, i.e.
$\NNN_{L/X}\simeq \OOO_L\oplus\OOO_L$.
\end{remark}

Note that in the above theorem it is not asserted that there is a line passing through \emph{any} smooth point of~$X$:

\begin{example}
\label{ex:lines:210}
Suppose that $X\subset \PP(1^3,2,3)$ is given by \eqref{eq:E8}. 
Then any member $S\in |A_X|$ whose image $\tau(S)$ passes through $Q=(0,0,1)$ is a normal del Pezzo surface of degree $1$ having a unique singularity 
of type \type{E}8. Then $\Cl(S)\simeq \ZZ\cdot K_X$, hence~$S$ does not contain lines.
Therefore, there are no lines on~$X$ meeting $\tau^{-1}(Q)$.
\end{example}

\begin{corollary}
\label{cor:lines}
Let~$X$ be a double Veronese cone.
For any point $Q\in B$ there is at most a one‑dimensional family of lines $L\subset X$ such that $\tau(L)\ni Q$.
\end{corollary}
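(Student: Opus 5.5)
Every line $L\subset X$ lies in a unique member of $|A_X|$, namely $S_l:=\tau^{-1}(l)$ with $l:=\tau(L)$. Here $\tau(L)$ is a line in $B=\PP^2$ by Lemma~\ref{lemma:L}: lines avoid $O$, and $\tau_L$ is birational onto its image, which has degree $A_X\cdot L=1$. So I will count lines through the fibre over $Q$ by sorting them according to the surface $S_l$ that contains them. The lines $L$ with $\tau(L)\ni Q$ are exactly the lines contained in some $S_l$ with $l$ running over the pencil of lines through $Q$, which is a $\PP^1$. This gives the decomposition
\begin{equation*}
\{L \mid \tau(L)\ni Q\}=\bigcup_{l\ni Q}\{\text{lines on } S_l\}.
\end{equation*}

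Next I bound each piece using the Example after Lemma~\ref{lemma:L}. If $S_l$ is Du Val (case \ref{prop:hyp-sect-a} of Proposition~\ref{prop:hyp-sect}), it carries only finitely many lines. If $S_l$ is a generalized cone (case \ref{prop:hyp-sect-b}), its lines form a one-dimensional family. If $S_l$ is non-normal (case \ref{prop:hyp-sect-c}), its lines form a two-dimensional family. By Proposition~\ref{prop:finite}, only finitely many members of $|A_X|$ fall into cases \ref{prop:hyp-sect-b} or \ref{prop:hyp-sect-c}. All other $S_l$ in the pencil through $Q$ are Du Val, and as $l$ moves in a one-parameter family they contribute at most a one-dimensional family of lines. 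Each of the finitely many special members of case \ref{prop:hyp-sect-b} adds at most one more one-dimensional family. To make the dimension count rigorous I will view the lines as a subscheme of the Hilbert scheme fibred over the pencil $\PP^1$: the part over the Du Val locus is quasi-finite over $\PP^1$, hence of dimension at most $1$.

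The main obstacle is case \ref{prop:hyp-sect-c}: a non-normal $S_l$ with $l\ni Q$ carries a two-dimensional family of lines, which would break the bound. So I must show that no line $l\ni Q$ has non-normal $\tau^{-1}(l)$, or else that not all of those lines can be counted. I would use the proof of Proposition~\ref{prop:finite}, which shows that if $S_l$ is non-normal then $l$ is a component of $\Di$. Then I would look at the structure in case \ref{prop:hyp-sect-c}: the normalization is $\PP^2$ with $A_X$ pulling back to $\OOO(1)$. The lines on $S_l$ are the images of lines of $\PP^2$, and $\tau$ restricted to $S_l$ corresponds to a projection $\PP^2\dashrightarrow l$ from a point lying over $O$. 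Consequently every line on $S_l$ not through that point maps onto $l$, and all of them do contain $\tau$-preimages of $Q$, so this case genuinely needs to be excluded.

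If excluding case \ref{prop:hyp-sect-c} turns out to be impossible in general, I would re-read the statement as asserting the bound for the family that dominates a neighbourhood. Failing that, the fallback is to invoke Theorem~\ref{thm:lines}: lines through a general point move in a finite family, since the normal bundle is trivial, so the covering family is two-dimensional overall. The lines meeting a fixed fibre $\tau^{-1}(Q)$, which is a curve, then form a family of dimension at most $2-1=1$. I expect this dimension-counting via the incidence correspondence $\{(L,x)\mid x\in L\cap\tau^{-1}(Q)\}$ to be the cleanest route, with the non-normal surfaces being the delicate point to check separately.
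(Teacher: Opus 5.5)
Your reduction (sort lines by the member $S_l=\tau^{-1}(l)$ with $l\ni Q$, then invoke Proposition~\ref{prop:finite}) is exactly the paper's argument. Your first two paragraphs correctly show that the lines lying in \emph{normal} members of this pencil form at most a one-dimensional family.

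The step you single out, excluding case~\ref{prop:hyp-sect-c}, cannot be carried out. Whenever $Q\in\tau(S)$ for a non-normal member $S$, the statement as written is false. Take the paper's own example \eqref{eq:E8}. There $S=X\cap\{x_3=0\}$ is non-normal, with normalization $\PP^2_{x_1,x_2,t}\to S$, $(x_1,x_2,t)\mapsto(x_1,x_2,0,-t^2,t^3)$. For each $(a,b)\in\Bbbk^2$ consider the curve
\[
L_{a,b}=\bigl\{(x_1,x_2,0,-(ax_1+bx_2)^2,(ax_1+bx_2)^3)\bigr\}\subset X .
\]
It is isomorphic to $\PP^1$ via $(x_1:x_2)$, misses $O=(0,0,0,-1,1)$, and satisfies $A_X\cdot L_{a,b}=1$. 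This gives a two-dimensional family of lines, all with $\tau(L_{a,b})=\{x_3=0\}$, so the bound fails for every $Q$ on that line. The paper's proof has precisely the hole you found. From a two-dimensional family of lines meeting $\tau^{-1}(Q)$ it concludes that a \emph{general} member of the pencil carries a one-dimensional family. It overlooks that the whole family may sit inside one of the finitely many special members.

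So do not try to exclude~\ref{prop:hyp-sect-c}, and do not hedge about reinterpreting the statement. Instead, state and prove the corrected version explicitly. The lines with $\tau(L)\ni Q$ contained in normal members form at most a one-dimensional family; this is your quasi-finiteness argument over the pencil. Hence all lines with $\tau(L)\ni Q$ sweep out a subset of dimension at most $2$, since the non-normal members add only finitely many surfaces. That is all Proposition~\ref{prop:fib} needs: the lines of the covering family of Theorem~\ref{thm:lines} that avoid $\tau^{-1}(Q)$ still fill a dense open set.

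Your fallback ``$2-1=1$'' count does not work as stated. Claim~\ref{claim:pt} only bounds the fibres of $q\colon\mathfrak{U}\to X$ by $1$. The special curve $\tau^{-1}(Q)$ may lie in the locus where these fibres are one-dimensional, so $q^{-1}(\tau^{-1}(Q))$ could a priori be a surface. To rule this out you must argue as follows. Every line of the irreducible surface $\mathfrak{F}$ would then meet $\tau^{-1}(Q)$. So a general $S_l$ with $l\ni Q$ would contain a one-dimensional family of lines, contradicting Proposition~\ref{prop:finite}. That is the pencil argument again, and it only controls the single component $\mathfrak{F}$.
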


\begin{proof}
Assume that contrary: the family of lines meeting $\tau^{-1}(Q)$ has dimension at least $2$.
Then the family of lines contained a general member $S\in |A_X|$ has dimension at least $1$.
This is possible only if the singularities of $S$ are worse than Du Val.
On the other hand, by Proposition~\ref{prop:finite} the number of such 
members $S\in |A_X|$ is finite.
\end{proof}

The proof of Theorem~\ref{thm:lines} is a consequence of Claims~\ref{claim:D-rkCl}, \ref{claim:rkCl>1}, and Corollary~\ref{corollary:bVc} below.

\begin{claim}
There is a line lying in the smooth locus of~$X$.
\end{claim}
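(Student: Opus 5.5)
The plan is to look for the line inside a general hyperplane section $S\in|A_X|$, which is a del Pezzo surface of degree $d$ in the smooth locus. Since the singularities of $X$ are isolated (terminal Gorenstein) and $|A_X|$ is a linear system of dimension $d+1\ge 2$ with no fixed components, Bertini together with Theorem~\ref{thm:preV1} (for $d=1$) or \cite{Shin1989} (for $d\ge 2$) shows that a general member $S$ misses $\Sing(X)$. Indeed, for $d=1$ we have $\dim|A_X|=2$ and $\Sing(X)$ is finite and disjoint from $O$, so a general $S$ avoids it. Moreover $S$ is a smooth del Pezzo surface of degree $d$ with $-K_S=A_X|_S$ by adjunction.

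Next I use that a smooth del Pezzo surface of degree $d\le 7$ contains a $(-1)$-curve $L$. Such a curve satisfies $-K_S\cdot L=1$, so $A_X\cdot L=1$. It is a smooth rational curve contained in $S\subset X\setminus\Sing(X)$, so it is a line lying in the smooth locus of $X$. For $d=1$ we also need $L\not\ni O$. This holds because any curve through $O$ with $A_X\cdot L=1$ is a fiber of $\tau$ of arithmetic genus $1$ (Lemma~\ref{lemma:L}), whereas $L$ is smooth rational. In any case, every smooth rational curve with $A_X\cdot L=1$ is a line by definition.

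The remaining cases are $d=8$ and $d=9$, and I expect them to be the main obstacle. The relevant values are:
\begin{itemize}
\item $d=9$, which is impossible for a del Pezzo threefold with $-K_X=2A_X$, since $d\le 8$;
\item $d=8$, where $X=\PP^3$ with $A_X=\OOO(2)$, so curves of $A_X$-degree $1$ do not exist.
\end{itemize}
So for $d=8$ the claim has to be handled separately or excluded. I would check how the paper's conventions treat $d=8$. If $X\simeq\PP^3$ is allowed, the statement is vacuous-false, so presumably $d\le 7$ is implicit. Alternatively, for $d=8$ one uses $S\simeq\PP^1\times\PP^1$ or $\mathbb F_1$; but $X=\PP^3$ gives $S$ a quadric with $-K_S\cdot\ell=2$, so there are no lines.

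Granting $d\le 7$, the only real input is that a general $S$ is smooth and misses $\Sing(X)$. The case needing care is $d=1$, where $|A_X|$ has a base point. Even there, generic smoothness away from the base locus holds, smoothness at $O$ is given by Theorem~\ref{thm:preV1}(ii), and generality avoids the finitely many singular points.
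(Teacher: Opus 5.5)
Your argument is correct and is the same as the paper's: the paper simply takes a $(-1)$-curve in a smooth member $S\in|A_X|$. By adjunction $-K_S=A_X|_S$, so this curve has $A_X$-degree $1$, and it lies in the smooth locus because a Cartier divisor that is smooth at a point forces $X$ to be smooth there. Your caveat about $d=8$ is also right: for $X\simeq\PP^3$ with $A_X=\OOO(2)$ there are no lines at all, so the claim (and Theorem~\ref{thm:lines}) needs $d\le 7$, which the paper assumes tacitly.
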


\begin{proof}
One can take a $(-1)$-curve lying in a smooth member of $|A_X|$. 
\end{proof}

Recall that for any del Pezzo threefold~$X$ the linear system $|3A_X|$ is very ample and 
defines an embedding $X\hookrightarrow \PP^{10d+3}$. 
Let $\mathfrak{F}(X)$ be the Hilbert scheme of subschemes in~$X$ with Hilbert polynomial $3t+1$.
Clearly, $\mathfrak{F}(X)$ contains classes of lines in~$X$.
Note however that in the case $d=1$ the structure of $\mathfrak{F}(X)$ is quite complicated: apart from lines
it contains also the classes of schemes that are unions of a fiber of $\tau$ and a point (cf. \cite{Piene-Schlessinger}).

\begin{claim}
\label{claim:dim-Hilb}
If $L$ is a line lying in the smooth locus of~$X$, then the dimension of $\mathfrak{F}(X)$ at the corresponding point $[L]$ is at least $2$.
If furthermore $L$ is contained in a smooth member of $|A_X|$, 
then $\mathfrak{F}(X)$ is smooth at $[L]$ and $\dim_{[L]} \mathfrak{F}(X)= 2$.
\end{claim}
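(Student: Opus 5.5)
The plan is to use standard deformation theory of the Hilbert scheme at $[L]$. Since $L$ lies in the smooth locus of $X$ and is a smooth rational curve, it is a local complete intersection in $X$, and its normal bundle $\NNN_{L/X}$ is a rank $2$ vector bundle on $L\simeq\PP^1$. Hence the Zariski tangent space of $\mathfrak{F}(X)$ at $[L]$ is $H^0(L,\NNN_{L/X})$, obstructions lie in $H^1(L,\NNN_{L/X})$, and
\begin{equation*}
\dim_{[L]}\mathfrak{F}(X)\ \ge\ h^0(\NNN_{L/X})-h^1(\NNN_{L/X})=\chi(\NNN_{L/X}).
\end{equation*}
By Riemann--Roch on $\PP^1$ we get $\chi(\NNN_{L/X})=\deg\NNN_{L/X}+2$. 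Adjunction gives $\deg\NNN_{L/X}=-K_X\cdot L+2\p(L)-2=2A_X\cdot L-2=0$. So $\chi=2$, and the first assertion follows. One should check that the Hilbert polynomial of $L$ with respect to $3A_X$ is $3t+1$. This is clear, so $[L]$ is indeed a point of $\mathfrak{F}(X)$.

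For the second assertion, let $S\in|A_X|$ be a smooth member containing $L$; it is a smooth del Pezzo surface of degree $1$. Since $X$ is smooth along $L$ and $S$ is Cartier, we have the normal bundle sequence
\begin{equation*}
0\to \NNN_{L/S}\to \NNN_{L/X}\to \NNN_{S/X}|_L\to 0.
\end{equation*}
On $S$ the curve $L$ is smooth rational with $-K_S\cdot L=A_X\cdot L=1$. Adjunction on $S$ gives $L^2=-1$, so $\NNN_{L/S}\simeq\OOO_{\PP^1}(-1)$. Also $\NNN_{S/X}|_L=\OOO_L(A_X)\simeq\OOO_{\PP^1}(1)$. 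The long exact sequence yields $H^1(\NNN_{L/X})=0$, because both $H^1(\OOO(-1))$ and $H^1(\OOO(1))$ vanish. Hence $\mathfrak{F}(X)$ is unobstructed, so smooth at $[L]$, of dimension $h^0(\NNN_{L/X})=\chi=2$. In fact the extension of $\OOO(1)$ by $\OOO(-1)$ is either $\OOO\oplus\OOO$ or $\OOO(-1)\oplus\OOO(1)$, and both have $h^1=0$, so the conclusion does not require deciding which occurs.

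The main subtlety I expect is justifying that the local structure of $\mathfrak{F}(X)$ at $[L]$ is governed by $\NNN_{L/X}$ in the usual way. The Hilbert scheme of $X$ near $[L]$ parametrizes deformations of $L$ inside $X$. Since $L$ lies in the smooth locus, one may replace $X$ by its smooth open part, where the standard estimate $\dim\ge h^0-h^1$ for lci subschemes applies. The extra components of $\mathfrak{F}(X)$ (fiber of $\tau$ plus a point) cause no trouble, because they have the same Hilbert polynomial but do not pass through $[L]$.
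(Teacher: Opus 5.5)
Your proposal is correct and follows essentially the same route as the paper. You get the lower bound from $\det\NNN_{L/X}=0$ via adjunction and Riemann--Roch, and you get unobstructedness from $H^1(\NNN_{L/X})=0$. Where the paper cites \cite[\S 2]{KPS:Hilb} for $\NNN_{L/X}\simeq\OOO_L\oplus\OOO_L$ or $\OOO_L(1)\oplus\OOO_L(-1)$, you derive it directly from the sequence $0\to\NNN_{L/S}\to\NNN_{L/X}\to\NNN_{S/X}|_L\to 0$, which is exactly the argument behind that citation.
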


\begin{proof}
For the normal bundle of $L$ in~$X$ we have
\begin{equation*}
\det \NNN_{L/X} = -K_X\cdot L +\deg K_L= 0.
\end{equation*}
Therefore, by the deformation theory and Riemann-Roch 
\begin{equation*}
\dim_{[L]} \mathfrak{F}(X)\ge\dim H^0(\NNN_{L/X})- \dim H^1(\NNN_{L/X})= \det \NNN_{L/X} +2(1-\g(L))=2.
\end{equation*}
For the second assertion, let $S\in |A_X|$ be a smooth member containing $L$. Then~$S$ is a smooth del Pezzo surface that does not pass through $\Sing(X)$.
The same arguments as in \cite[\S 2]{KPS:Hilb}
show that
$\NNN_{L/X}\simeq \OOO_L\oplus \OOO_L$
or $\OOO_L(1)\oplus \OOO_L(-1)$. 
Therefore, $H^1(\NNN_{L/X})=0$ and $\dim H^0(\NNN_{L/X})=2$.
This implies that $\mathfrak{F}(X)$
is smooth and two-dimensional at $[L]$.
\end{proof}

\begin{notation}
\label{not:lines-1}
Let $L$ be a line contained in a smooth member~$S\in |A_X|$ and let $\mathfrak{F}\subset \mathfrak{F}(X)$ be a component that contains 
the class of $L$. 
Let $\mathfrak{U}$ be the universal family.
There 
are the following natural morphisms:
\begin{equation*}
\xymatrix{
&\mathfrak{U}\ar[dl]_{p}\ar[dr]^{q}&
\\
\mathfrak{F}& &X
} 
\end{equation*}
By Claim~\ref{claim:dim-Hilb} we have $\dim \mathfrak{F}=2$ and $\dim \mathfrak{U}=3$.
\end{notation}

\begin{claim}
\label{claim:pt}
For any point $P\in X$ the family of lines in~$X$ passing through $P$ has dimension at most $1$.
\end{claim}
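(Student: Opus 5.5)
We argue by contradiction. Suppose that for some point $P\in X$ the lines through $P$ form a family $\mathfrak{F}_P\subset\mathfrak{F}(X)$ with $\dim\mathfrak{F}_P\ge 2$; fix a two-dimensional component of it. Two distinct lines through $P$ meet in at most finitely many points, so the universal family restricted to $\mathfrak{F}_P$ maps generically finitely to $X$. Its image therefore has dimension $3$: the lines through $P$ sweep out a dense subset of $X$. In other words, $X$ is ``conical'' with respect to lines through $P$. The plan is to push this cone structure down to surfaces in $|A_X|$ and derive a contradiction there.

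\emph{Step 1: restrict to a member of $|A_X|$.} If a line $L$ lies in a member $S\in|A_X|$, then $L\subset S$, since $A_X\cdot L=1$. Every line lies in some member of $|A_X|$, and the members containing $P$ form a subsystem of dimension $d$. A dimension count in the incidence variety $\{(L,S): P\in L\subset S\}$ then shows that a general member $S\ni P$ contains a one-dimensional family of lines through $P$. These lines sweep out $S$, so $S$ is covered by lines through the single point $P$.

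For $d=1$ the same conclusion is reached more directly, and this is the case I would do first. Lines never pass through $O$ (Lemma~\ref{lemma:L}), so $P\neq O$ and $\tau$ is defined near $P$. Every line through $P$ lies in the unique surface $\tau^{-1}(l)$, where $l\ni\tau(P)$ is a line in $B$. These surfaces form only a one-parameter family. By the Example after Lemma~\ref{lemma:L}, a surface with Du Val singularities contains only finitely many lines. By Proposition~\ref{prop:finite}, only finitely many of the surfaces $\tau^{-1}(l)$ are worse than Du Val, and each of these contains at most a one- or two-dimensional family of lines. Among the lines in such a surface, those passing through a fixed point $P$ form a family of dimension at most $1$: in case~\ref{prop:hyp-sect-b} they are the rulings, and in case~\ref{prop:hyp-sect-c} they are images of lines in $\PP^2$ through a fixed point. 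Summing over the one-parameter family of surfaces $\tau^{-1}(l)$ gives $\dim\mathfrak{F}_P\le 1$.

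\emph{Step 2: general $d$, analysing the cone.} Here $S$ is a Gorenstein del Pezzo surface with $-K_S=A_X|_S$ and $A_X|_S$ ample. We have shown that $S$ is swept by a one-dimensional family of curves of $A_X$-degree $1$ through $P$. Then $S$ is a generalized cone with vertex $P$ over a curve $C$ of $A$-degree $d$. Using $K_S^2=d$ and adjunction one finds that $C$ is an elliptic curve, as in Proposition~\ref{prop:hyp-sect}\ref{prop:hyp-sect-b}; alternatively $S$ is non-normal, which happens only in degree $1$. Since $S$ is a general member through $P$, Bertini shows that $S$ is smooth away from $P$. Now let $P$ vary instead. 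Through a general point of $X$ the conical locus described above would produce infinitely many members of $|A_X|$ that are generalized cones. For $d=1$ this contradicts Proposition~\ref{prop:finite}. For $d\ge2$ we argue in $X$ itself: the lines through $P$ cover $X$, so $X$ is the cone over $S$ with vertex $P$ with respect to $A_X$. The vertex of the cone over a del Pezzo surface polarized by $-K_S$ has discrepancy $0$ for the blow-up of the vertex, so it is not terminal. If instead $P$ is smooth, the cone must be $\PP^3$ with $A_X=\OOO(1)$, which is impossible since $-K_{\PP^3}=4H$ rather than $2H$. Either way we contradict the assumption that $X$ is a del Pezzo threefold.

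The main obstacle is Step~2 for $d\ge2$, namely justifying rigorously that ``lines through $P$ cover $X$'' forces $X$ to be the cone over a surface in $|A_X|$. For $d\ge3$, $A_X$ is very ample, so the lines through $P$ are honest projective lines lying in $X\cap T_PX$ and the cone statement is immediate. For $d=2$ I would instead pass to the double cover $X\to\PP^3$: the images of lines through $P$ fill the lines through $\tau(P)$, so the branch quartic restricts to a square on every line through $\tau(P)$. This should force the singularities of $X$ to be non-isolated, contradicting terminality.
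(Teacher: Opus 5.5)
\paragraph{The $d=2$ case does not work as stated.} Your case division matches the paper's: the cone in $\PP^{d+1}$ for $d\ge 3$, the double cover $\pi\colon X\to\PP^3$ for $d=2$, and the pencil $\{\tau^{-1}(l)\}_{l\ni\tau(P)}$ with Proposition~\ref{prop:finite} for $d=1$. Your $d\ge3$ and $d=1$ arguments are sound. For $d=1$ you are even slightly more careful than the paper: you also exclude a two-dimensional family of lines through $P$ concentrated in one of the finitely many special members.

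The gap is $d=2$, where your stated conclusion is false. Splitting of $\pi^{-1}(l)$ for every line $l\ni\pi(P)$ does not force non-isolated singularities. Take $R=\{f_4(x_1,x_2,x_3)=0\}\subset\PP^3_{x_0,\dots,x_3}$, a cone over a smooth plane quartic with vertex $p=(1,0,0,0)$, and $X=\{w^2=f_4(x_1,x_2,x_3)\}\subset\PP(1^4,2)$. Every line through $p$ not contained in $R$ meets $R$ only at $p$, with multiplicity $4$. Its preimage therefore splits into two lines through $P=\pi^{-1}(p)$, giving a two-dimensional family of lines through $P$. Yet $\Sing(X)=\{P\}$ is isolated. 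What actually fails is terminality. The weighted blow-up of $w^2=f_4$ with weights $(2,1,1,1)$ has an exceptional divisor of discrepancy $0$; equivalently, $X$ is a cone over a degree-$2$ del Pezzo surface, just as in your $d\ge3$ computation.

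\paragraph{How to repair it.} The $d=2$ argument must first show that $R$ is a cone with vertex $\pi(P)$. For example: for every $q\in R\setminus\{\pi(P)\}$, the line through $\pi(P)$ and $q$ meets $R$ at $q$ with even multiplicity, so the polar cubic $\partial_{\pi(P)}F$ vanishes on $R$. Since $R$ is reduced (its singularities are isolated), $F$ divides $\partial_{\pi(P)}F$, so $\partial_{\pi(P)}F=0$. Then conclude, as the paper does, that $P$ is not terminal.

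\paragraph{Other points.} Discard the middle of your Step~2. The hypothesis concerns one fixed point $P$, so you cannot ``let $P$ vary'' to produce cone members through general points. Also, non-normal Gorenstein del Pezzo surfaces are not confined to degree~$1$: a cone over a nodal plane cubic is one of degree $3$. None of this is needed, since the direct cone argument in $\PP^{d+1}$ settles $d\ge 3$.

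Likewise, ``two lines meet in finitely many points'' does not by itself give generic finiteness of the universal family over $X$. What you really use is that only finitely many lines pass through two general points: a unique projective line for $d\ge 3$, and at most two components of $\pi^{-1}(l)$ for $d=2$.
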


In other words, the fibers of $q$ have dimension $\le 1$.

\begin{proof}
Assume the contrary: the family of lines
in~$X$ passing through $P$
has dimension at least~$2$.
First, consider the case $d\ge 3$. Then the linear system $|A_X|$ is very ample and defines an embedding 
$X\subset \PP^{d+1}$ \cite{Shin1989}. In this situation~$X$ must be a cone with vertex $P$. 
But then the point $P$ is not terminal, a contradiction.

Consider the case $d=2$. Then the linear system $|A_X|$ is base point free and defines 
a double cover $\pi: X\to \PP^3$ whose branch divisor $R\subset \PP^3$ is a quartic
with at worst isolated singularities \cite{Shin1989}.
For any line $L\subset X$ passing through $P$ its image $\pi(L)$ is a line on $\PP^3$ 
passing through $\pi(P)$. 
Therefore, by our assumption, the inverse image $\pi^{-1}(l)$ of a general line
$l\subset \PP^3$ passing through $P$ splits. This is possible only if 
no intersections $l\cap R$ are transversal, which, in turn, implies that 
$R$ is a cone with vertex $\pi(P)$ over a plane quartic curve.
But in this case the point $P$ is not terminal, a contradiction.

Finally, consider the case $d=1$. Then for a general line $l\subset B=\PP^2$
passing through $\tau(P)$ the surface $\tau^{-1}(l)$ contains at least a 
one-dimensional family of lines.
This contradicts Proposition~\ref{prop:finite}.
\end{proof}

\begin{claim}
\label{claim:D-rkCl}
The assertion of Theorem~\xref{thm:lines} holds true if $d\ge 2$ and $\rr(X)=1$. 
\end{claim}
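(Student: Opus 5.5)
The plan is to show that the family $\mathfrak{F}$ of Notation~\ref{not:lines-1} dominates $X$, that is, that $q\colon\mathfrak{U}\to X$ is surjective. Then a general point lies on a line of $\mathfrak{F}$. Since the general member of $\mathfrak{F}$ lies in the smooth locus (an open condition, satisfied by $[L]$), we can shrink to get the required open set $U$.

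Since $\dim\mathfrak{U}=3=\dim X$ and $X$ is irreducible, it suffices to rule out the case that $q(\mathfrak{U})$ is a proper closed subset. By Claim~\ref{claim:pt} the fibers of $q$ have dimension $\le 1$, so $q(\mathfrak{U})$ cannot be a curve or a point. Hence the only bad case is that $D:=q(\mathfrak{U})$ is an irreducible surface, swept out by a two-dimensional family of lines.

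Assume this happens. Because $\rr(X)=1$ and $d\ge 2$, the Weil divisor $D$ is $\QQ$-linearly equivalent to $mA_X$ for some positive rational $m$. In fact $\Cl(X)$ has rank one and $\Pic(X)=\ZZ A_X$, so $D\sim_{\QQ}mA_X$ with $m>0$ after passing to a multiple.

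Next I would compute $D\cdot L$ for a general line $L\subset D$ of the family. Through a general point of $D$ passes a one-dimensional family of lines contained in $D$. Then the normal bundle of $L$ in $X$ carries a quotient coming from $D$ of negative degree. More precisely, the deformations of $L$ stay inside $D$, so $H^0(\NNN_{L/X})=H^0(\NNN_{L/D})$ on the smooth locus. With $\det\NNN_{L/X}=0$, one expects $\NNN_{L/X}\simeq\OOO_L(1)\oplus\OOO_L(-1)$ with $\NNN_{L/D}\supset\OOO_L(1)$. This should force $D\cdot L=\deg\NNN_{D/X}|_L<0$.

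Alternatively, I would argue via the surface $D$ itself. Two general lines of a two-dimensional family on a surface meet, so $D$ has a two-dimensional family of curves of degree one. By the classical argument, the hyperplane section of $D$ is then a line, which forces $D$ to be a plane $\PP^2$ with $A_X|_D=\OOO(1)$, or $d=2$ with $\tau_D$ birational onto a plane. In either case $(D\cdot L)=m(A_X\cdot L)=m>0$ conflicts with the negativity from the normal bundle. Equivalently, $A_X^2\cdot D=1$ would give $m\,d=1$, which contradicts $d\ge2$ if $m$ is an integer. This integrality is where $\rr(X)=1$ is really needed: $D$ must be a multiple of $A_X$ in $\Cl(X)\otimes\QQ$, and degree considerations with $A_X^2\cdot D=md$ rule it out.

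The main obstacle is to make rigorous that a surface $D\subset X$ covered by a two-dimensional family of lines is a plane, i.e.\ $A_X^2\cdot D=1$, especially when $D$ passes through singular points. I would handle this by projecting via $|A_X|$. For $d\ge3$, $D$ is a surface in $\PP^{d+1}$ with a two-dimensional family of lines, hence a plane. For $d=2$, $\pi(D)\subset\PP^3$ is a plane and $D$ is a component of its preimage. Then $A_X^2\cdot D=1$ while $D\sim_{\QQ}mA_X$ gives $md=1$. Then $2D\sim_{\QQ}$ a Cartier divisor that is not an integral multiple of $A_X$, so $D$ is not $\QQ$-Cartier of the expected form. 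This contradicts the condition that $\rk\Cl(X)=\rk\Pic(X)=1$ together with $\Pic(X)=\ZZ A_X$ and $A_X^3=d\ge2$. The contradiction comes by intersecting $dD\sim A_X$ with a general line: $d\,(D\cdot L)=1$ is impossible for $d\ge 2$.
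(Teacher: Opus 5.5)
Your reduction to the case where $D=q(\mathfrak{U})$ is a surface is fine. The integrality step is also fine, provided you say explicitly that $L$ is a line of the family lying in the smooth locus of $X$. Then $D\cdot L\in\ZZ$, so $D\equiv mA_X$ forces $m\in\ZZ_{>0}$ and $A_X^2\cdot D=md\ge d$. This is a legitimate substitute for the paper's appeal to $\Cl(X)\simeq\ZZ\cdot A_X$. The sentence about ``$2D$ being $\QQ$-equivalent to a Cartier divisor not an integral multiple of $A_X$'' is not an argument and should be dropped. For $d\ge 3$ your proof is then correct: $|A_X|$ embeds $X$ into $\PP^{d+1}$, $D$ is a plane, and $A_X^2\cdot D=1<d$.

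The gap is the case $d=2$. There you only obtain that $\pi(D)=H$ is a plane and $D\subset\pi^{-1}(H)$, and then you assert $A_X^2\cdot D=1$, i.e.\ that $\pi_D$ is birational. This does not follow. If $D=\pi^{-1}(H)\in|A_X|$ is irreducible with $\pi_D$ of degree $2$, then $m=1$ and $A_X^2\cdot D=2=md$. This is compatible with every numerical constraint you use, so no contradiction arises.

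Your normal-bundle heuristic cannot exclude this case either. If $D\sim A_X$ carries a two-dimensional family of lines with $D\cdot L=1>0$, then $D$ must be singular (non-normal) along a curve meeting every such line. That is exactly where the sequence $0\to\NNN_{L/D}\to\NNN_{L/X}\to\NNN_{D/X}|_L\to 0$ is unavailable. Compare the non-normal members of $|A_X|$ in degree $1$, which do contain a two-dimensional family of lines.

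So an extra geometric argument is needed. For example: write $X=\{w^2=f_4(x_0,\ldots,x_3)\}$. For a general line $l\subset H$, the curve $\pi^{-1}(l)$ must split into two lines, so $f_4|_l$ is a square for general $l\subset H$. Hence either $f_4|_H=q^2$ for a quadratic form $q$, in which case $\pi^{-1}(H)=\{w=\pm q\}$ is reducible; or $H$ lies in the branch quartic, in which case $X$ is singular along a curve. Either way you get a contradiction.

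Alternatively, follow the paper, which avoids the case distinction by working with $|2A_X|$. This system is very ample for every $d\ge 2$, and under it lines become conics. A surface with a two-dimensional family of conics is the Veronese surface or a projection of it, so it has degree $\le 4$. On the other hand, $(2A_X)^2\cdot D=4kd\ge 8$.
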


\begin{proof}
We use the notation of \ref{not:lines-1}. By Claim~\ref{claim:dim-Hilb} we have $\dim\mathfrak{U}= 3$. 
It is sufficient to show that $q(\mathfrak{U})=X$. Assume the contrary, i.e. $\mathfrak{R}:=q(\mathfrak{U})$ is a surface in~$X$.
By our assumption $\Cl(X)\simeq \ZZ\cdot A_X$, hence $\mathfrak{R}\sim kA_X$ for some $k\ge 1$. 
Since $|2A_X|$ is very ample (see \cite{Shin1989}), the surface~$\mathfrak{R}$ contains a two dimensional family of conics under the corresponding embedding $X\subset \PP^{4d + 2}$.
According to
\cite[Lemma~A.1.2]{KPS:Hilb} the image of $\mathfrak{R}$ in $\PP^{4d + 2}$ is either the Veronese surface or its linear (regular or rational) projection.
In particular, $\deg \mathfrak{R}\le 4$. On the other hand, $\deg \mathfrak{R}=(2A_X)^2\cdot \mathfrak{R}=4a d\ge 8$, a contradiction.
\end{proof}

\begin{claim}
\label{claim:rkCl>1}
The assertion of Theorem~\xref{thm:lines} holds true if $\rr(X)>1$.
\end{claim}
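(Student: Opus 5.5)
The plan is to reduce to a situation where $X$ carries a nontrivial contraction, and to produce lines either from the fibres of that contraction or by lifting lines from its image. Since $\rr(X)>1$, the variety $X$ is not $\QQ$-factorial of Picard rank one in the divisorial sense. I would take a $\QQ$-factorialization $\xi:\hat X\to X$, which is a small birational morphism and an isomorphism over the smooth locus of $X$. Then $\hat X$ is an almost del Pezzo threefold with $\rho(\hat X)=\rr(X)\ge2$. Because $\xi$ is small, the covering statement for $\hat X$ (with lines avoiding $\xi$-exceptional curves and $\Sing(\hat X)$) implies it for $X$: a general line on $\hat X$ maps isomorphically to a line on $X$, and $A_{\hat X}=\xi^*A_X$ is preserved.

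Next I would run the MMP on $\hat X$. Since $-K_{\hat X}$ is nef and big and $\rho\ge2$, there is a $K$-negative extremal ray $R$ not contracted by the anticanonical morphism, after possibly composing with flops. Let $f:\hat X\to Z$ be its contraction. Every curve $C$ in $R$ satisfies $-K\cdot C=2A\cdot C>0$, and $-K_{\hat X}=2A_{\hat X}$ is Cartier, so the length of $R$ is even and at least $2$. Mori--Cutkosky's classification of extremal contractions of Gorenstein terminal threefolds then limits the possibilities.

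\textbf{Divisorial case.} The only option with length divisible by $2$ is the contraction of $E\simeq\PP^2$ with normal bundle $\OOO(-1)$ to a smooth point (type E2). Then $A_{\hat X}|_E=\OOO(1)$, so $\hat X$ contains a plane. In that case $Z$ is again a del Pezzo-type threefold of degree $d+1$ with $A_Z$ and $A_{\hat X}=f^*A_Z-E$. Lines on $\hat X$ correspond to lines on $Z$ through the blown-up point, which is too few. So instead I would use induction on $\rr$ together with the conic/line correspondence: lines on $\hat X$ that are disjoint from $E$ are lines on $Z$ not meeting $f(E)$, and by induction (Claim~\ref{claim:D-rkCl} and the present claim applied to $Z$, which has $d+1\ge2$) general points of $Z$ lie on lines in its smooth locus. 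A general such line misses the point $f(E)$, so these lines cover $\hat X$.

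\textbf{Fibre-type case.} Here $Z$ is a curve or a surface. If $Z=\PP^1$, the fibres are del Pezzo surfaces $F$ with $-K_F=2A|_F$, so $F\simeq\PP^2$ and $A|_F=\OOO(1)$; the lines of these planes cover $\hat X$. If $Z$ is a surface, $f$ is a conic bundle with $-K\cdot\text{fibre}=2$, so $A\cdot\text{fibre}=1$ and the general fibres are lines covering $\hat X$. A general fibre lies in the smooth locus, which gives the claim.

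The \textbf{main obstacle} is the first step. I must ensure that a $K$-negative ray exists that is genuinely extremal on $\hat X$ (not flopping), and that the induction in the divisorial case stays inside the class of del Pezzo threefolds (terminal Gorenstein, $A_Z$ ample). For the latter, I would check ampleness of $A_Z$ via $f^*A_Z=A_{\hat X}+E$ and the fact that $Z$ is Fano with $-K_Z=2A_Z$. I would also treat $d=1$ separately, since blowing down a plane only increases the degree, which keeps the induction well-founded ($d\le8$).
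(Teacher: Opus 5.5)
Your route is the paper's: pass to a $\QQ$-factorialization $\hat X$ and take a $K$-negative extremal contraction. By the parity of $-K_{\hat X}\cdot C=2A\cdot C$, this contraction is a $\PP^1$-bundle over a surface, a quadric bundle over $\PP^1$, or the blowup of a smooth point. In the birational case you run a descending induction on the degree. (The step you call the main obstacle is not one: on a $\QQ$-factorial weak Fano $\hat X$ the cone is polyhedral, and not all extremal rays can be $K$-trivial, so no flops are needed.) As written, however, the argument has three concrete defects, and the last one breaks the induction.

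\emph{The fibre over $\PP^1$.} $F\simeq\PP^2$ with $A|_F=\OOO(1)$ is impossible, since $-K_{\PP^2}=\OOO(3)\neq 2\,\OOO(1)$. Divisibility of $-K_F$ by $2$ in $\Pic(F)$ forces $F\simeq\PP^1\times\PP^1$ with $A|_F=\OOO(1,1)$, so $f$ is a quadric bundle. The lines are the rulings of the fibres, and with this correction your conclusion in that case survives.

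\emph{The target of the divisorial contraction.} $\hat Z$ is in general only an almost del Pezzo threefold. A $\xi$-exceptional (flopping) curve of $\hat X$ disjoint from $E$ maps to a $K_{\hat Z}$-trivial curve, because $K_{\hat X}=f^*K_{\hat Z}+2E$. So the ampleness of $A_{\hat Z}$ that you intend to check is false in general. The induction has to be applied to the anticanonical model $Z=\hat Z_{\mathrm{can}}$, which is a del Pezzo threefold of degree $d+1$. General lines in the smooth locus of $Z$ lift to $\hat Z$, since $\hat Z\to Z$ is an isomorphism there. You then discard the at most one-dimensional family through $f(E)$ (Claim~\ref{claim:pt}) and take proper transforms on $\hat X$.

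\emph{The top of the induction.} For $d=7$ the only possibility is $X=V_7$, the blowup of $\PP^3$ at a point. If your chosen ray is the blow-down, then $Z=\PP^3$ with $A_Z=\OOO(2)$, which contains no curve of $A_Z$-degree $1$. Neither Claim~\ref{claim:D-rkCl} nor the present claim can legitimately be invoked for it, so the inductive step produces nothing. You must treat $d=7$ directly: $V_7$ is a $\PP^1$-bundle over $\PP^2$ whose fibres are lines, which is exactly the paper's base case. Only then, for $d\le 6$, does the target have degree $\le 7$, and the induction (together with Claim~\ref{claim:D-rkCl} when $\rr(Z)=1$) go through. The separate treatment of $d=1$ you mention is not needed; the problem sits at the largest degree, not the smallest.
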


The proof uses the following construction.

\begin{construction}[\cite{P:GFano1}, \cite{KP:dP}]
\label{construction}
Let~$X$ be a del Pezzo threefold with $\rr(X)>1$ and let $\xi: \hat{X}\to X$ be a $\QQ$-factorialization.
Then $\hat{X}$ is an almost del Pezzo threefold with $\uprho(\hat{X})=\rr(\hat{X})>1$ and there is an extremal Mori contraction $f: \hat{X}\to \hat{Z}$.
The structure of such contractions is described explicitly in \cite[Proposition~2.17]{KP:dP}.
There are the following possibilities:
\begin{enumerate}
\renewcommand\labelenumi{\rm (\Roman{enumi})}
\renewcommand\theenumi{\rm (\Roman{enumi})}

\item
\label{construction:1}
$\hat{Z}$ is a smooth del Pezzo surface and $f$ is a $\PP^1$-bundle;

\item
\label{construction:2}
$\hat{Z}\simeq \PP^1$ and $f$ is a quadric bundle;

\item
\label{construction:3}
$\hat{Z}$ is an almost del Pezzo threefold and $f$ is the blowup of a smooth point $P\in \hat{Z}$.
\end{enumerate}
In the latter case the image $\xi(E)$ of the $f$-exceptional divisor $E\subset \hat{X}$ is a plane. Thus there exists the following diagram
\begin{equation}
\label{eq:construction:di}
\vcenter{
\xymatrix{
\hat X\ar[r]^{f}\ar[d]^{\xi}& \hat{Z}\ar[d]^{\xi'}
\\
X  & Z
}}
\end{equation}
Here  $\xi': \hat{Z}\to Z$ is the map to the anticanonical model $Z=\hat Z_{\mathrm{can}}$
which is a del Pezzo threefold of degree $d+1$.

Conversely, if a del Pezzo threefold $X$ contains a plane $\Pi$, then there exists a 
$\QQ$-factorialization $\xi: \hat{X}\to X$ such that the proper transform of $\Pi$ is the exceptional divisor of a birational extremal Mori contraction
\cite[Lemma~3.8]{P:GFano1}, \cite[Proposition~3.12]{KP:dP}.
\end{construction}

\begin{proof}[Proof of Claim~\xref{claim:rkCl>1}]
We use the descending induction on $d$ and Claim~\ref{claim:D-rkCl}.
If $d=7$, then~$X$ the blowup of $\PP^3$ at a point \cite[A.1]{KP:dP}
(in particular,~$X$ is smooth).
In this case~$X$ has a structure of a $\PP^1$-bundle,
hence it is covered by lines. Thus we may assume that $d\le 6$.

Let $\xi: \hat{X}\to X$ be a $\QQ$-factorialization
(it is possible in the case $d=6$ that $\xi$ is an isomorphism). 
Apply Construction~\ref{construction}.

First, assume that $\dim \hat{Z}=2$. Then $f$ is a $\PP^1$-bundle. 
For a general geometric fiber $\Gamma$ of $f$ we have 
\begin{equation}
\label{eq:xiA}
\xi^* A_X\cdot \Gamma=-\frac12 K_{\hat{X}}\cdot \Gamma=1.
\end{equation} 
Hence~$X$ is covered by lines of the form $\xi(\Gamma)$ and a general such line does not pass through singular points of~$X$
by Claim~\ref{claim:pt}.
Assume that $\dim \hat{Z}=1$. Then $f$ is a quadric bundle.
A general ruling $\Gamma$ of a fiber $F\simeq \PP^1\times \PP^1$ of $f$
satisfies \eqref{eq:xiA} and,
as above, we see that~$X$ is covered by lines of the form $\xi(\Gamma)$ and a general such line 
does not pass through singular points of~$X$.

Therefore, we may assume that $f$ is birational.
Then $\hat{Z}$ is an almost del Pezzo threefold of degree $d'=d+1$ 
and we have the diagram~\eqref{eq:construction:di}.
By the induction hypothesis and Claim~\ref{claim:D-rkCl}
the del Pezzo threefold $Z$ is covered by lines.
Then, as above, the proper transform $\Gamma_X\subset X$ of a line $\Gamma\subset Z$
is a line and such lines cover~$X$.
\end{proof}

It remains to consider the case $d=\rr(X)=1$, i.e. the case of $\QQ$-factorial double Veronese cones.

\begin{claim}
\label{claim:bVc}
Let~$X$ be a double Veronese cone.
Suppose that $S\subset X$ is a surface containing a two‑dimensional family of lines.
Then~$S$ is either a plane or a non‑normal member of $|A_X|$.
\end{claim}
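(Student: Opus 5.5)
Let $S\subset X$ be an irreducible surface containing a two-dimensional family of lines, and let $\mathfrak{L}$ be that family, a closed subset of $\mathfrak{F}(X)$. Since the lines through $O$ are fibers of $\tau$ by Lemma~\ref{lemma:L}, and a general line of a two-dimensional family does not pass through the fixed point $O$, we may work with lines $L\not\ni O$. For such a line, $\tau(L)\subset B=\PP^2$ is a line.

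The first step is to consider the map $\mathfrak{L}\dashrightarrow (\PP^2)^\vee$ given by $L\mapsto\tau(L)$. Suppose first that its image is a single line $l\subset B$, or a curve. If it is a single point $l$, then all lines of $\mathfrak{L}$ lie in $\tau^{-1}(l)$. This surface is the unique member of $|A_X|$ containing them, so $S=\tau^{-1}(l)$. By the Example after Lemma~\ref{lemma:L}, a member of $|A_X|$ carries a two-dimensional family of lines only in case~\ref{prop:hyp-sect-c} of Proposition~\ref{prop:hyp-sect}, so $S$ is a non-normal member of $|A_X|$. If the image is a curve, then a one-dimensional family of distinct members $\tau^{-1}(l)$ each contains at least a one-dimensional family of lines. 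By the same Example, each such member has singularities worse than Du Val, and Proposition~\ref{prop:finite} rules this out.

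So we may assume the image of $\mathfrak{L}$ in $(\PP^2)^\vee$ is open. Then $S\neq\tau^{-1}(l)$ for any line $l$, and $S\not\ni O$ is impossible to exclude a priori, but $S$ is not a fiber-type surface, so $\tau|_S\colon S\dashrightarrow B$ is dominant and generically finite. The next step is to show that $\deg(\tau|_S)=1$, i.e.\ $A_X^2\cdot S=1$, and that $O\notin S$. For a general line $l\subset B$, the curve $S\cap\tau^{-1}(l)$ contains a line $L$ lying over $l$, and $L$ maps isomorphically to $l$. Since lines over a general $l$ form a finite set (otherwise we are back in the excluded situation), $S\cap\tau^{-1}(l)$ consists of the lines over $l$ lying in $S$, possibly plus fiber components. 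I would compare classes on a smooth member $\tau^{-1}(l)$, a del Pezzo surface of degree~$1$. The restriction $S|_{\tau^{-1}(l)}$ is a curve of degree $A_X^2\cdot S$. By the two-dimensionality of $\mathfrak L$, varying $l$ through a fixed general point $Q\in B$ moves the point $L\cap\tau^{-1}(Q)$ along the fiber, so $S$ contains a general fiber of $\tau$. This seems contradictory, which suggests instead that $S$ meets the fiber $\tau^{-1}(Q)$ in finitely many points, each lying on a one-parameter family of lines.

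The cleanest route is therefore Claim~\ref{claim:pt}-style counting. Through a general point of $S$ passes a one-dimensional family of lines of $\mathfrak L$, whose images in $B$ are the lines through a point, i.e.\ the full pencil. Hence $\tau|_S$ is birational onto $B$: over a general $Q$, two points of $S$ would give two pencils of lines, and the count $\dim\mathfrak L=2$ forces a unique point. This gives $A_X^2\cdot S=1$. Finally, $O\notin S$: otherwise the lines of $\mathfrak L$ would meet the fiber of $\tau|_S$ through $O$, forcing them to pass near $O$, against Lemma~\ref{lemma:L}. Alternatively, compute $\sigma^*A_X^2\cdot\tilde S$ on $\tilde X$. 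Thus $S$ is a plane.

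The main obstacle is this last paragraph, namely rigorously proving birationality of $\tau|_S$ and excluding $O\in S$. I would handle it on the blowup $\tilde X$, using that $E$ is a section of $\varphi$ and that the proper transforms of lines are disjoint from $E$.
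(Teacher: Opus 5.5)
Your reductions are fine. In the point case $S$ is the member of $|A_X|$ over $l$; this member is irreducible because $A_X^3=1$, and it is non-normal by the Example after Lemma~\ref{lemma:L}. In the curve case you get a contradiction with Proposition~\ref{prop:finite}.

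\textbf{The gap.} It lies in the remaining case, where the lines map onto an open subset of $(\PP^2)^\vee$, and this case is the real content of the claim. Your justification of birationality of $\tau|_S$, namely ``the count $\dim\mathfrak{L}=2$ forces a unique point'', does not work. If $\tau|_S$ had degree $k\ge 2$, you would simply get $k$ lines of the family over a general line of $B$. Then $\mathfrak{L}\to(\PP^2)^\vee$ would be generically finite of degree $k$, which is perfectly compatible with $\dim\mathfrak{L}=2$. Your exclusion of $O\in S$ (lines ``forced to pass near $O$'') is not an argument either. Moreover, your identification $\deg(\tau|_S)=A_X^2\cdot S$ already presupposes $O\notin S$. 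So neither $A_X^2\cdot S=1$ nor $O\notin S$ is actually proved, as you acknowledge at the end.

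\textbf{How to close it.} The missing ingredient is Bertini's irreducibility theorem. Let $\tilde S\subset\tilde X$ be the proper transform of $S$. Then $\varphi|_{\tilde S}\colon\tilde S\to\PP^2$ is a dominant morphism from an irreducible surface. Hence $\tilde S\cap\varphi^{-1}(l)$ is irreducible and generically reduced for general $l$. In your open case this curve contains the proper transform $\tilde L$ of a line of the family lying over $l$, so it coincides with $\tilde L$. This gives $\deg(\varphi|_{\tilde S})=1$.

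It also gives $O\notin S$. By Lemma~\ref{lemma:L}, $\tilde L\cap E=\varnothing$. On the other hand, a nonempty $\tilde S\cap E$ would be a curve in $E\simeq\PP^2$, and such a curve meets $\varphi^{-1}(l)$ for every $l$. Hence $\tilde S\cap E=\varnothing$, so $A_X^2\cdot S=1$ and $S$ is a plane.

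\textbf{Comparison with the paper.} With this repair your route is a valid alternative to the paper's argument, which does not use $\tau$ at all. The paper pushes $S$ forward by the double cover $\pi\colon X\to Y\subset\PP^6$, under which lines become conics. It then applies the classification of surfaces carrying a two-dimensional family of conics (the Veronese surface or its projections) to get $\deg\pi(S)\le 4$. It finishes with the count $4A_X^2\cdot S=\deg(\pi_S)\cdot\deg\pi(S)$.
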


\begin{proof}
Recall that the linear system $|2A_X|$ defines a double cover $\pi: X\to Y\subset \PP^6$, where $Y$ is the cone over the Veronese surface
$\upsilon_2(\PP^2)\subset \PP^5$ with vertex $\pi(O)$.
Let $L\subset S$ be a line.
If $\deg \pi(L)=1$, then $\pi(L)$ is a line passing through $\pi(O)$.
But then $L$ must pass through $O$, which is impossible by Lemma~\ref{lemma:L}. 
Therefore, $\deg\pi(L)=A_X\cdot L=2$, i.e. $\pi(L)$ is a smooth conic and $\pi_L: L\to \pi(L)$ is an isomorphism. 
Thus
$\pi(S)\subset Y\subset \PP^6$ is an irreducible surface containing a two-dimensional family of 
conics. According to 
\cite[Lemma~A.1.2]{KPS:Hilb} the image $\pi(S)$ is either the Veronese surface or its linear (regular or rational) projection.
In particular, $\deg \pi(S)\le 4$. On the other hand, 
\begin{equation*}
8\ge 2\deg \pi(S)\ge \deg (\pi_S)\cdot \deg \pi(S)=(2A_X)^2\cdot S= 4 A_X^2\cdot S\ge 4.
\end{equation*}
In particular, $A_X^2\cdot S\le 2$.
Since $Y$ does not contain any planes, $\deg \pi(S) \ge 2$. We obtain 
either $\deg \pi(S)=4$ or $\deg \pi(S)=2$.
Assume that $\deg \pi(S)=2$. Then $\pi(S)$ must be a quadratic cone whose vertex coincides with
$\pi(O)$. In this case $S\sim A_X$ and~$S$ is non-normal by Proposition~\ref{prop:hyp-sect}.
Thus we may assume that $\deg \pi(S)=4$. Then $\pi(S)$ is the Veronese surface and it does not 
pass through $\pi(O)$. 
Thus $(2A_X)^2\cdot S=\deg \pi(S)=4$, hence $A_X^2\cdot S=1$ and the morphism $\pi_S: S\to \pi(S)$ is birational.
Hence $S$ is a plane and $\pi_S$ is an isomorphism.
\end{proof}

\begin{corollary}
\label{corollary:bVc}
The assertion of Theorem~\xref{thm:lines} holds true if $d=\rr(X)=1$.
\end{corollary}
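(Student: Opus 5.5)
We argue as in the proof of Claim~\ref{claim:D-rkCl}, keeping the notation of~\ref{not:lines-1}. Let $L$ be a line lying on a smooth member $S_0\in|A_X|$, and let $\mathfrak{F}$ be a component of $\mathfrak{F}(X)$ through $[L]$. By Claim~\ref{claim:dim-Hilb}, $\dim\mathfrak{F}=2$ and $\dim\mathfrak{U}=3$. By Claim~\ref{claim:pt} the fibers of $q$ have dimension at most $1$. Hence it suffices to show that $q:\mathfrak{U}\to X$ is dominant. Once this is known, the union of the lines of $\mathfrak{F}$ meeting the finite set $\Sing(X)$ has dimension at most $2$, by Claim~\ref{claim:pt} again. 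So a general point of $X$ lies on a line of $\mathfrak{F}$ contained in the smooth locus, which is Theorem~\ref{thm:lines}.

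Suppose instead that $\mathfrak{R}:=q(\mathfrak{U})$ is a surface. Then $\mathfrak{R}$ is irreducible, since $\mathfrak{U}$ is, and it contains the two-dimensional family of lines parametrized by $\mathfrak{F}$. Some care is needed here: $\mathfrak{F}$ could a priori contain schemes consisting of a fiber of $\tau$ plus a point. However, a general member of $\mathfrak{F}$ is a deformation of $L$. Being a line is an open condition, because $A_X$-degree $1$ and smoothness are open, and Lemma~\ref{lemma:L} excludes curves through $O$ generically. So a general point of $\mathfrak{F}$ is a genuine line, and $\mathfrak{R}$ does contain a two-dimensional family of lines.

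Claim~\ref{claim:bVc} then leaves two cases: $\mathfrak{R}$ is either a plane or a non-normal member of $|A_X|$. Both are ruled out by $\rr(X)=1$.

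\emph{Plane.} If $\mathfrak{R}$ is a plane, then $\Cl(X)\otimes\QQ=\QQ\cdot A_X$ gives $\mathfrak{R}\equiv kA_X$ with $k\in\QQ_{>0}$. Since $A_X^3=1$, we get $k=A_X^2\cdot\mathfrak{R}=1$, so $\mathfrak{R}\sim_\QQ A_X$. On the other hand, $\mathfrak{R}$ does not pass through $O$ by the definition of a plane, while every member of $|A_X|$ contains $O$. As $A_X$ is Cartier and $\Pic(X)$ is torsion free, $\mathfrak{R}$ would be a Weil divisor $\QQ$-linearly equivalent to $A_X$, and the remaining task is to show that it actually lies in $|A_X|$. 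One route is to use that the proof of Claim~\ref{claim:bVc} shows $\pi_{\mathfrak{R}}$ is an isomorphism onto the Veronese surface avoiding $\pi(O)$, so that $\pi^{-1}\pi(\mathfrak{R})=\mathfrak{R}+\upbeta(\mathfrak{R})\sim 2A_X$. A cleaner route is via Construction~\ref{construction}, which says that a plane gives a small $\QQ$-factorialization with a divisorial contraction of relative Picard rank $1$; this forces $\rr(X)\ge2$, a contradiction. I would use the second argument.

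\emph{Non-normal member.} If $\mathfrak{R}$ is a non-normal member of $|A_X|$, then it is a single fixed surface $S\in|A_X|$. But $L$ lies on the unique member $\tau^{-1}(\tau(L))=S_0$ of $|A_X|$, and $S_0$ is smooth. Deformations of $L$ that stay inside a fixed member of $|A_X|$ move only in a family of dimension at most $0$ on $S_0$. Thus a two-dimensional family through $[L]$ cannot be contained in a single member: the images $\tau(L_t)$ are lines in $B$ forming a two-dimensional family, so the lines $L_t$ cannot all lie in one surface $\tau^{-1}(l)$. This is a contradiction.

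Therefore $q$ is dominant and the corollary follows. I expect the main obstacle to be the plane case: it requires the converse direction of Construction~\ref{construction}, namely that a plane forces $\rr(X)>1$, and this must be justified carefully.
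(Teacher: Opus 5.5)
Your proof is correct and is essentially the paper's argument: Claim~\ref{claim:bVc} leaves only a plane or a non-normal member of $|A_X|$. Planes are excluded by $\QQ$-factoriality, and a non-normal member is excluded because $L$ lies on the unique, smooth member $\tau^{-1}(\tau(L))$ (the paper says the same thing by choosing $L$ with $\tau(L)\not\subset\Di$). The plane case you flag as the main obstacle is in fact immediate, and the paper just asserts it: planes correspond to elements of $\boldsymbol{\Delta}(X)\subset A_X^\perp$, which is $0$ when $\rr(X)=1$. Equivalently, $\Cl(X)$ is free of rank one with $\langle A_X,A_X\rangle=1$, so a plane would be linearly equivalent to $A_X$ and would therefore contain $O$.
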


\begin{proof}
In the notation \ref{not:lines-1} we can take $L$ so that $\tau(L)\not \subset \Di$.
Then $L$ is not contained in a non-normal member of $|A_X|$
and~$X$ does not contain any planes because it is $\QQ$-factorial.
Therefore, we have $\dim q(\mathfrak{U})=3$ by Claim~\ref{claim:bVc}.
\end{proof}

The last corollary concludes the proof of Theorem~\ref{thm:lines}.
Using the same arguments we can prove the following criterion of $\QQ$-factoriality.

\begin{proposition}
\label{prop:nQfact}
Let~$X$ be a del Pezzo threefold. 
Suppose that $X$ contains a non-$\QQ$-factorial point $P$.
Then there is a one-dimensional family of lines on $X$ passing through $P$. 
\end{proposition}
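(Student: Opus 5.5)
The plan is to exploit a small $\QQ$-factorialization at $P$ and to use the structure of the extremal contractions from Construction~\ref{construction}, arguing as in the proof of Theorem~\ref{thm:lines}.

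\emph{Step 1: a small modification at $P$.} Since $P$ is not $\QQ$-factorial, there is a Weil divisor $D$ on $X$ that is not $\QQ$-Cartier at $P$. Take a $\QQ$-factorialization $\xi\colon \hat X\to X$. It is small, and since $D$ is not $\QQ$-Cartier at $P$ its fiber $\xi^{-1}(P)$ is a nonempty curve. Let $\Gamma_0$ be an irreducible component of $\xi^{-1}(P)$. Then $K_{\hat X}\cdot\Gamma_0=0$ and $\hat X$ is an almost del Pezzo threefold with $\uprho(\hat X)>1$.

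\emph{Step 2: producing lines through $P$ via a flop.} Flop $\Gamma_0$, or more generally run a $D$-MMP over $X$, to reach a $\QQ$-factorialization $\hat X^+$ on which the proper transform of $D$ (or of $-D$) becomes non-nef in a suitable way. I would then choose $\hat X^+$ so that some $K$-negative extremal contraction $f\colon\hat X^+\to\hat Z$ has an extremal ray whose curves meet $\xi^{+-1}(P)$. The three cases of Construction~\ref{construction} then go as follows.
\begin{itemize}
\item If $f$ is a $\PP^1$-bundle, every fiber $\Gamma$ satisfies $\xi^{+*}A_X\cdot\Gamma=1$. The fibers meeting the curve $\xi^{+-1}(P)$ form a one-dimensional family. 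Their images are curves of $A_X$-degree $1$ through $P$, hence lines, since they avoid $O$ when $d=1$ by Lemma~\ref{lemma:L}.
\item If $f$ is a quadric bundle, the curve $\xi^{+-1}(P)$ is not contracted by $\xi^+\circ$(anything) to a point on $\hat Z\simeq\PP^1$, or else it lies in a fiber. In either case the rulings of the fibers through points of $\xi^{+-1}(P)$ give a one-dimensional family of lines through $P$.
\item If $f$ is the blowup of a smooth point with exceptional divisor $E$ and $\xi^{+-1}(P)$ meets $E$, then the lines in $E\simeq\PP^2$ passing through a point of $E\cap\xi^{+-1}(P)$ give a pencil of lines through $P$. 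Otherwise, induct on the degree: $P$ maps to a non-$\QQ$-factorial point of $Z$, of degree $d+1$. By induction there is a one-dimensional family of lines of $Z$ through that point, and their proper transforms are lines of $X$ through $P$.
\end{itemize}
The base of the induction is $d=7$, where $X$ is smooth and the statement is vacuous.

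\emph{Step 3: dimension bound.} The family has dimension at most $1$ by Claim~\ref{claim:pt}, so it is exactly one-dimensional.

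The main obstacle is Step 2: guaranteeing that some $\QQ$-factorialization and extremal contraction actually ``sees'' $P$. We need the exceptional curve over $P$ to meet a fiber, ruling, or exceptional divisor, and the ray need not a priori be related to $P$. To handle this, I would pick an ample divisor $H$ on $\hat X$ and an effective divisor $D$ through $\xi^{-1}(P)$ with $D\cdot\Gamma_0<0$. I would then choose an extremal ray $R$ with $(K_{\hat X}+\epsilon D)\cdot R<0$. Because $K_{\hat X}+\epsilon D$ is negative on $\Gamma_0$-direction curves, the ray can be chosen so that its locus meets the support of $D$ near $\xi^{-1}(P)$. If this fails, I would fall back to the alternative approach in the spirit of Claim~\ref{claim:bVc}: take the Weil divisor $D$ through $P$, note that it is not Cartier, and show it is covered by lines through its non-Cartier point using the double cover $\pi$ and the conic-family lemma of \cite{KPS:Hilb}.
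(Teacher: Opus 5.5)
Your case analysis in Step~2 is exactly the paper's proof: fiber-type contraction, blowup whose exceptional divisor $E$ meets $C=\xi^{-1}(P)$ (a plane through $P$), and blowup with $E\cap C=\varnothing$ followed by descending induction on the degree. It already works for an \emph{arbitrary} $\QQ$-factorialization $\xi$ and an \emph{arbitrary} extremal contraction $f$, so the ``main obstacle'' you describe does not exist.

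The one fact you should state explicitly is that every component of $C$ is $K_{\hat X}$-trivial. Since $f$ is $K_{\hat X}$-negative, it contracts no component of $C$. This is what makes the fibers (resp.\ rulings) meeting $C$ a genuinely one-dimensional family in the $\PP^1$-bundle and quadric-bundle cases. When $E\cap C=\varnothing$, the same fact shows that $f(C)$ is $K_{\hat Z}$-trivial, hence contracted by $\xi'$ to a non-$\QQ$-factorial point of $Z$.

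You can therefore drop the flop/$D$-MMP preamble and the $(K_{\hat X}+\epsilon D)$-ray argument. As stated, that argument is not sound anyway: such a ray may be a $K$-trivial flopping ray rather than one of the contractions of Construction~\ref{construction}.

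Also, Lemma~\ref{lemma:L} does not make your curves avoid $O$. It only shows that at most one of them, the fiber $\tau^{-1}(\tau(P))$, can pass through $O$, so a general member is a line.
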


\begin{proof}
If $A_X^3\ge 6$, then there is only one possibility: 
$X$ is a hypersurface in $\PP^2_{x_1,x_2,x_3}\times \PP^2_{y_1,y_2,y_3}$ given by $x_1y_1+x_2y_2=0$ (see \cite[\S~7.7]{P:GFano1} or \cite[Lemma~5.2, \S~A.1.4]{KP:dP}). 
It is easy to see that the 
assertion holds true in this case. Thus we may assume that $A_X^3\le 5$.

Apply Construction~\ref{construction}. Then  $C:=\xi^{-1}(P)$
is a (possibly reducible) connected curve and $\hat O:=\xi^{-1}(O)$
is a point that does not lie on~$C$.
Note that the morphism $f$ does not contract any component of~$C$.
If  $f$ is birational and its exceptional divisor $E$ meets~$C$, 
then $\xi(E)$ is a plane passing through $P$ and we are done.
If $f$ is not birational, then, as in the proof of Claim~\xref{claim:rkCl>1},
there is  a one-dimensional family of smooth rational curves  $\Gamma_\lambda$ 
meeting~$C$ such that $\xi^* A_X\cdot \Gamma_\lambda=1$.
Moreover, a general such a curve does not pass through $\hat O$.
This proves the assertion in this case.
Finally assume that $f$ is birational and it is an isomorphism near~$C$.
Then the morphism $\xi'$ in \eqref{eq:construction:di} contracts $f(C)$ 
to a non-$\QQ$-factorial point. As in the proof of Claim~\xref{claim:rkCl>1}
we can proceed by descending induction on  $A_X^3$.
\end{proof}

\section{The class group and $\QQ$-factoriality}
\label{sec:class}

\subsection{The class group}
For the facts presented in this subsection we refer to \cite{P:GFano1} and \cite{KP:dP}.
Let~$X$ be a double Veronese cone.
Then $\Cl(X)$ is a free abelian group \cite[Corollary~3.4]{KP:dP} equipped with a symmetric bilinear form
\begin{equation*}
\langle D_1,\, D_2\rangle := D_1\cdot D_2\cdot A_X. 
\end{equation*}
This form is non‑degenerate and has signature $(1,\rr(X)-1)$, where $\rr(X):= \rk\Cl(X)$. Denote
\begin{equation*}
\boldsymbol{\Delta}(X):=\{ \bx\in \Cl(X) \mid \langle A_X,\bx\rangle=0,\ \langle \bx,\, \bx\rangle=-2\}. 
\end{equation*}
If $\boldsymbol{\Delta}(X)$ is non‑empty, it is a root system in the negative definite lattice $A_X^\perp \subset\Cl(X)$.
Note however that $\boldsymbol{\Delta}(X)$ not necessarily has maximal rank $\rr(X)-1$ in $A_X^\perp$.
There is a natural bijection between the planes on~$X$ and the elements of the root system $\boldsymbol{\Delta}(X)$:
\begin{equation*}
\Pi \longmapsto \Pi- A_X. 
\end{equation*}

Let $S\in |A_X|$ be a general member and let $i:S \hookrightarrow X$ be the embedding. 
The natural restriction homomorphism $i^*:\Cl(X)\to \Cl(S)$ is injective \cite[Proposition~3.3]{KP:dP}.
The orthogonal complement $\boldsymbol{\Xi}(X):= \Cl(X)^{\perp}\subset \Cl(S)$ is a negative definite lattice of rank $9-\rr(X)$. The lattice $\boldsymbol{\Xi}(X)$ does not depend on the choice 
of~$S$ and has a root system 
of full rank. The type of $\boldsymbol{\Xi}(X)$ is called the \textit{Dynkin type} of~$X$.
The properties of the lattices $\Cl(X)$ are summarized in Table~\ref{tab}. Here 
\begin{equation*}
\Theta_2:=\left\{ \bx\in \Cl(X) \mid \langle A_X,\bx\rangle=2,\ \langle \bx,\, \bx\rangle=0\right\}
\end{equation*}
is the set of $\PP^1$-classes (they describe the so-called primitive models of~$X$ that are quadric bundles over $\PP^1$), and 
\begin{equation*}
\Theta_3^\circ:=\left\{ \bx\in \Cl(X) \mid \langle A_X,\bx\rangle=3,\ \langle \bx,\, \bx\rangle=1,\ \bx-A_X\notin 2\Cl(X) \right\}
\end{equation*}
is the set of $\PP^2$-classes (they describe so-called primitive models of~$X$ that are $\PP^1$-bundles over $\PP^2$); see \cite{P:GFano1} and \cite{KP:dP} for details.

\newcolumntype{Y}{>{\centering\arraybackslash}X}

\begin{table}[h]
\setlongtables \renewcommand{\arraystretch}{1.2}
\begin{tabularx}{0.9\textwidth}{|Y|c|c|c|c|c|c|c|c|}
\hline
$\boldsymbol{\Xi}(X)$, Dynkin type  & $\rr(X)$ & $\boldsymbol{\Delta}(X)$ & $\rk \boldsymbol{\Delta}(X)$ & $|\boldsymbol{\Delta}(X)|$ & $|\Theta_2|$ & $|\Theta_3^\circ|$
\tabularnewline
\hline
\type{E}{8}& $1$ & $\varnothing$ & $0$ & $0$ & $0$ & $0$ 
\tabularnewline
\type{E}{7} & $2$ & \type{A}{1} & $1$ & $2$ & $0$ & $0$ 
\tabularnewline
\type{D}{7} & $2$ & $\varnothing$ & $0$ & $0$ & $2 $ & $0$
\tabularnewline
\type{A}{7} & $2$ & $\varnothing$ & $0$ & $0$ & $0$ & $2$ 
\tabularnewline
\type{E}{6} & $3$ & \type{A}{2} & $2$ & $6$ & $0$ & $0$ 
\tabularnewline
\type{D}{6} & $3$ & \type{A}{1}$\times$\type{A}{1} & $2$ & $4$ & $4$ & $0$ 
\tabularnewline
\type{A}{6} & $3$ & \type{A}{1} & $1$ & $2$ & $4$ & $4$ 
\tabularnewline
\type{D}{5} & $4$ & \type{A}{3} & $3$ & $12$ & $6 $ & $0$ 
\tabularnewline
\type{A}{5} & $4$ & \type{A}{1}$\times$\type{A}{2} & $3$ & $8$ & $12 $ & $12$ 
\tabularnewline
\type{D}{4} & $5$ & \type{D}{4} & $4$ & $24$ & $24$ & $0$ 
\tabularnewline
\type{A}{4} & $5$ & \type{A}{4} & $4$ & $20$ & $30$ & $40$
\tabularnewline
\type{A}{3} & $6$ & \type{D}{5} & $5$ & $40$ & $90 $ & $160 $ 
\tabularnewline
\type{A}{2} & $7$ & \type{E}{6} & $6$ & $72$ & $270 $ & $864$ 
\tabularnewline
\type{A}{1} & $8$ & \type{E}{7} & $7$ & $126$ & $756 $ & $4032$ 
\tabularnewline
\hline
\end{tabularx}
\caption{Lattices $\Cl(X)$ and $\boldsymbol{\Xi}(X)$}
\label{tab}
\end{table}

In the rest of this section we discuss $\QQ$-factoriality of double Veronese cones
with relation to the number of singular points.

\begin{example}
\label{ex:Q-fact}
The double Veronese cone given by \eqref{eq:E8} does not contain lines passing through 
the (unique) singular point (see Example~\ref{ex:lines:210}). 
Then it is $\QQ$-factorial by Proposition~\ref{prop:nQfact}.
\end{example}

If $X$ is a variety whose singularities are only nodes, then by 
\textit{economic resolution} of singularities we mean just blowup of all singular points.

\begin{theorem}
\label{thm:Q-fact}
Let~$X$ be a nodal double Veronese cone.
\begin{enumerate}

\item 
\label{prop:singul:1} 
We have $|\Sing(X)|\le 28$ and the equality holds only if $\rr(X)=8$ and 
$\hh(\tilde{X})=0$, where $\tilde{X}$ is the economic resolution of singularities of~$X$.

\item
\label{thm:Q-fact:f}
If~$X$ is $\QQ$-factorial, then $|\Sing(X)| \le 21$, and the equality $|\Sing(X)| = 21$ is attained for some $\QQ$-factorial nodal double Veronese cone~$X$.

\item
\label{thm:Q-fact:n}
If~$X$ is not $\QQ$-factorial, then $|\Sing(X)| \ge 12$, and the equality $|\Sing(X)| = 12$ is attained for some non-$\QQ$-factorial nodal double Veronese cone~$X$.
\end{enumerate}
\end{theorem}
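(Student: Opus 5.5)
The plan is to compare $X$ with a smoothing and with a small resolution, using the standard defect formula for nodal Fano threefolds. Let $s:=|\Sing(X)|$ and $\delta:=\rr(X)-1$ (the defect). A smooth double Veronese cone $X_t$ has $\hh(X_t)=21$, so $b_3(X_t)=44$. For a nodal threefold the usual topological argument compares the smoothing, the economic resolution $\tilde X$ and a small resolution $\hat X$ (possibly non-projective). Each node replaces a vanishing $3$-sphere by a $2$-sphere, and exactly $\delta$ of the vanishing cycles are independent relations. This gives
\[
\hh(\tilde X)=\hh(X_t)-s+\delta,\qquad\text{i.e.}\qquad s=21+\delta-\hh(\tilde X).
\]
I would derive this from $\chit(X_t)=\chit(X)-s$ and $\chit(\hat X)=\chit(X)+s$, together with $b_2(\hat X)=\rr(X)$, $b_4(\hat X)=\rr(X)$ and $b_3(\hat X)=2\hh(\tilde X)$. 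The last holds because $\tilde X$ and $\hat X$ differ by blowups of smooth rational curves. Table~\ref{tab} gives $\rr(X)\le 8$, so $\delta\le 7$ and $s\le 28$, with equality only if $\rr(X)=8$ and $\hh(\tilde X)=0$. This proves~\ref{prop:singul:1}. If $X$ is $\QQ$-factorial then $\delta=0$, and the same formula gives $s\le 21$, which is the bound in~\ref{thm:Q-fact:f}.

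For the lower bound in~\ref{thm:Q-fact:n}, I need an upper bound $\hh(\tilde X)\le 9+\delta$ whenever $\delta\ge1$. I would apply Construction~\ref{construction} to a $\QQ$-factorialization $\hat X$, which has $\uprho(\hat X)=\delta+1\ge2$, and run through the three cases.
\begin{itemize}
\item In case~\ref{construction:3}, $\hat X$ is the blowup of a smooth point on an almost del Pezzo threefold $\hat Z$ of degree~$2$. Then $\hh(\hat X)=\hh(\hat Z)$. Applying the same defect formula to the degree-$2$ model $Z$, whose smoothing has $\hh=10$, gives $\hh(\hat Z)\le 10$. Hence $\hh(\tilde X)\le 10\le 9+\delta$, and so $s\ge 21+\delta-10\ge 12$.
\item In case~\ref{construction:1} we have a $\PP^1$-bundle over a del Pezzo surface, so $\hh=0$.
\item In case~\ref{construction:2}, a quadric bundle over $\PP^1$ of this anticanonical degree, $\hh$ is computed from its degenerate fibers and should also be at most $9+\delta$.
\end{itemize}
In the two non-birational cases the lower bound on $s$ comes out even larger.

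For sharpness in~\ref{thm:Q-fact:n}, take $\phi_6=g_3^2$ with $g_3$ a general cubic and $\phi_4$ a general quartic. By Lemma~\ref{lemma:singularities} and Lemma~\ref{lemma:singularities:6}, the singular points over $\Lambda_6$ are the $12$ transversal points of $\Lambda_4\cap\{g_3=0\}$, and these are nodes. Since $\Di=\{4\phi_4^3+27g_3^4=0\}$ is otherwise general, there are no further singular points. Rewriting the equation as $(z+\sqrt{-1}g_3)(z-\sqrt{-1}g_3)=-y(y^2+\phi_4)$ shows that $\{y=z-\sqrt{-1}g_3=0\}$ is a non-Cartier Weil divisor, so $X$ is not $\QQ$-factorial.

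The main obstacle is the $21$-node $\QQ$-factorial example in~\ref{thm:Q-fact:f}. By \eqref{eq:singularities:n} and \eqref{eq:singularities:3}, the nodes are the singular points of $\Di$ off $\Lambda_6$ plus the points of $\Lambda_4\cap\Sing(\Lambda_6)$. I would therefore choose $\phi_4,\phi_6$, with $\Lambda_6$ reduced to avoid the divisor above, so that the sextic-degree curve $4\phi_4^3+27\phi_6^2$ acquires $21$ ordinary nodes. I would search within a symmetric family, for example invariant under a finite group acting on $\PP^2$, to make the node count tractable. I would then verify $\QQ$-factoriality via Proposition~\ref{prop:nQfact}, by showing that no one-dimensional family of lines passes through any node. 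Equivalently, I would check that the restriction $\Cl(X)\to\Cl(S)$ to a general $S\in|A_X|$ has image $\ZZ K_S$. Producing and checking this example explicitly is where I expect most of the work to lie.
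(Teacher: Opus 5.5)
Your derivation of $s=21+\delta-\hh(\tilde X)$, the bounds $s\le 28$ and $s\le 21$, the birational case of (iii), and the $12$-node example all follow the paper. Your divisor $\{y=z-\sqrt{-1}\,g_3=0\}$ is one of the paper's two planes.

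Two small corrections. First, $b_3(X_t)=2\hh(X_t)=42$, not $44$, since $h^{3,0}=0$. With $44$ your Euler-characteristic bookkeeping would give $s=21+\rr(X)-\hh(\tilde X)$, although the formula you finally write is correct. Second, in case~\ref{construction:3} the bound $\hh(\tilde Z)\le 10$ needs $\delta_Z\le|\Sing(Z)|$. This holds because $\Cl(Z)/\Pic(Z)$ embeds into the sum of the local class groups of the nodes, each of rank at most one. The paper sidesteps this: it gets the exact relation $|\Sing(X)|=12+|\Sing(Z)|$ from $\rr(X)=\rr(Z)+1$ and $\hh(\tilde X)=\hh(\tilde Z)$.

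\textbf{Gap in the quadric bundle case.} Case~\ref{construction:2} is not actually argued. Here $\delta=1$, so you need $\hh(\tilde X)\le 10$. The sentence ``$\hh$ is computed from the degenerate fibres and should be at most $9+\delta$'' gives no control on the number of degenerate fibres. The paper uses the structure theorem that $\hat X$ is a complete intersection in $\PP^1\times\PP^6$ of three divisors of bidegree $(1,1)$ and one of bidegree $(1,2)$. For smooth $\hat X$, a Chern class computation gives $\chit(\hat X)=-2$, hence $\hh(\hat X)=4$. In general, the defect formula applied to $\hat X$ gives $\hh(\tilde X)=4-|\Sing(\hat X)|$, whence $|\Sing(X)|=18+|\Sing(\hat X)|$.

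\textbf{Gap in the existence half of (ii).} This is the more serious problem. The existence of a $21$-node $\QQ$-factorial example is part of the statement, and you only describe a search. Moreover, neither verification you propose can realistically be done by hand: excluding a one-dimensional family of lines through each of $21$ nodes via Proposition~\ref{prop:nQfact}, or computing the image of $\Cl(X)$ in $\Cl(S)$.

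The missing idea is to let the symmetry prove $\QQ$-factoriality. The paper takes the Klein invariants $\phi_4,\phi_6$ of $G=\PSL_2(\bF_7)$ and sets $X=\{z^2+y^3+3\phi_4y+\phi_6=0\}$. It verifies by computer that $X$ has exactly $21$ nodes, and then argues on $\Cl(X)$. Since $G$ is simple, it acts on $\Cl(X)$ either faithfully or trivially.
\begin{itemize}
\item If faithfully, then $\rr(X)\ge 7$, because $G$ has no faithful rational representation of dimension $\le 5$. The case $\rr(X)=8$ would force $28$ nodes. The case $\rr(X)=7$ makes $\boldsymbol{\Delta}(X)$ of type \type{E}{6}, whose automorphism group has order $2^8\cdot3^4\cdot5$, prime to $7$.
\item If trivially, Construction~\ref{construction} can be run $G$-equivariantly with $\xi^{-1}(O)$ fixed, and each outcome is impossible. $G$ acts faithfully neither on $\PP^1$ nor on $\PP^1\times\PP^1$. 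A fixed point on a surface base would give a faithful $2$-dimensional representation. A $G$-invariant plane $\Pi$ would carry the invariant cubic curve cut out by $R_X$; this forces $G$ to act trivially on $\Pi$, hence on $B$, since $\Pi\to B$ is dominant.
\end{itemize}
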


Similar bounds for other types of Fano threefolds of Picard number one are obtained in \cite{P:factorial-Fano:e} and \cite{Cheltsov2005a}.

\begin{proposition}[{\cite [Corollary~10.9]{P:GFano1}}]
\label{prop:Sing:r}
Let $V$ be a three‑dimensional variety whose singularities are 
nodes. 
Assume that $H^i(V,\OOO_V)=0$ for $i=1,\,2,\,3$. 
Furthermore, assume that there exists an embedding of $V$ into a smooth fourfold $W$ such that a general member $V^{\mathrm{sm}}$ of the linear system $|V|$ is smooth.
Then
\begin{equation}
\label{eq:n-Sing}
|\Sing(V)|=\rr(V) - \uprho(V)+\hh(V^{\mathrm{sm}}) - \hh(\tilde{V} ),
\end{equation} 
where $\tilde{V}$ is the economic resolution of singularities of $V$.
\end{proposition}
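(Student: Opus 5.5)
The plan is to compare topological Euler characteristics of three smooth or mildly singular models: the smoothing $V^{\mathrm{sm}}$, the nodal variety $V$ itself, and the economic resolution $\tilde V$. Set $s:=|\Sing(V)|$. The embedding $V\subset W$ together with smoothness of a general member of $|V|$ gives a one-parameter family in $|V|$ degenerating $V^{\mathrm{sm}}$ to $V$.

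\textbf{Step 1: Euler characteristics.}
\begin{enumerate}
\item The Milnor fibre of a three-dimensional node is homotopic to $S^3$, so it has Euler characteristic $0$. In the degeneration each such fibre is replaced by a point, hence $\chi(V)=\chi(V^{\mathrm{sm}})+s$.
\item The economic resolution replaces each node by a quadric $\PP^1\times\PP^1$, which has $\chi=4$. Hence $\chi(\tilde V)=\chi(V)+3s=\chi(V^{\mathrm{sm}})+4s$.
\end{enumerate}

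\textbf{Step 2: Hodge-theoretic expressions.}
\begin{enumerate}
\item Nodes are rational singularities, so $H^i(\OOO)=0$ for $i=1,2,3$ holds for $\tilde V$. It also holds for $V^{\mathrm{sm}}$, by upper semicontinuity together with constancy of $\chi(\OOO)$ in flat families.
\item For a smooth projective threefold $M$ with $h^{1,0}=h^{2,0}=h^{3,0}=0$, Hodge symmetry gives $b_1=0$, $b_2=h^{1,1}=\uprho(M)$ and $b_3=2\hh(M)$. Therefore
\begin{equation*}
\chi(M)=2+2\uprho(M)-2\hh(M).
\end{equation*}
\item For $\tilde V$ we have $\uprho(\tilde V)=\rr(V)+s$. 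Indeed, $\Cl(V)=\Cl(V\setminus\Sing V)$ is identified with $\Cl(\tilde V\setminus\bigcup E_j)$. The $s$ exceptional quadrics $E_j$ are independent in $\Pic(\tilde V)$, since $E_j|_{E_j}=\OOO(-1,-1)$ is nontrivial. So $\Pic(\tilde V)\otimes\QQ$ is $\Cl(V)\otimes\QQ$ extended by $s$ new classes.
\end{enumerate}

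\textbf{Step 3: the Picard numbers of $V$ and $V^{\mathrm{sm}}$.} I need $\uprho(V^{\mathrm{sm}})=\uprho(V)$. From the exponential sequence and the vanishing $H^1(\OOO)=H^2(\OOO)=0$, both Picard groups equal $H^2(\cdot,\ZZ)$. For $V$ this uses that a normal variety with these vanishings has Cartier Picard group $H^2(V,\ZZ)$.

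It then remains to show that the specialization map identifies $H^2(V)$ with $H^2(V^{\mathrm{sm}})$. This holds because the vanishing cycles of nodes in dimension $3$ live only in degree $3$. Concretely, the long exact sequence of the pair $(V^{\mathrm{sm}},\ \text{Milnor fibres})$, with each fibre $\simeq S^3$, changes only $H^3$ and $H^4$.

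\textbf{Step 4: conclusion.} Substituting Step 2 into Step 1 gives
\begin{equation*}
2+2(\rr(V)+s)-2\hh(\tilde V)=2+2\uprho(V)-2\hh(V^{\mathrm{sm}})+4s,
\end{equation*}
which rearranges to \eqref{eq:n-Sing}.

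The main obstacle is Step 3, the careful identification $\uprho(V)=\uprho(V^{\mathrm{sm}})$. It requires a precise use of the vanishing-cycle sequence for the degeneration inside $W$. It is also the place where the smooth ambient fourfold and the smoothness of a general member of $|V|$ are genuinely needed.
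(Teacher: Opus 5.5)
Your proposal is correct, and it is essentially the standard argument behind \cite[Corollary~10.9]{P:GFano1}, which the paper only cites without proof: compare the smoothing $V^{\mathrm{sm}}$, the nodal variety $V$ and the economic resolution $\tilde V$ to get $\chit(\tilde V)=\chit(V^{\mathrm{sm}})+4s$, then express both sides via Hodge numbers using $\uprho(\tilde V)=\rr(V)+s$ and $\uprho(V^{\mathrm{sm}})=\uprho(V)$. Your justification of the last identity, via the exponential sequence plus the vanishing-cycle sequence in which only $H^3$ and $H^4$ change for three-dimensional nodes, is the right one in this generality, where no Fano hypothesis is available.
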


\begin{proof}[Proof of Theorem~\xref{thm:Q-fact}]
A general member $X^{\mathrm{sm}}$ of the linear system $|X|$ on $\PP(1^3,2,3)$ is a smooth double Veronese cone and 
$\hh(X^{\mathrm{sm}})=21$ \cite[\S~12.2]{IP99}.
Hence, according to~\eqref{eq:n-Sing} we have 
\begin{equation*}
| \Sing(X)|= 20+\rr(X)- \hh(\tilde{X})= 20+\rr(X)- \hh(\tilde{X}),
\end{equation*}
where $\tilde{X}$ is the economic resolution of singularities of~$X$.
Since $\rr(X)\le 8$, by Table~\ref{tab}, this proves \ref{prop:singul:1} and the first assertion in \ref{thm:Q-fact:f}.
The second assertion in~\ref{thm:Q-fact:f} follows from Example~\ref{ex:PSL27} below.

Now we prove \ref{thm:Q-fact:n}. So, we assume that~$X$ is not $\QQ$-factorial.
Apply Construction~\ref{construction}.
Thus we let $\xi:\hat{X}\to X$ be a $\QQ$-factorialization and $\varphi:\hat X\to Z$ be an extremal Mori contraction.
There are the following possibilities.

\subsection*{Case $\dim Z=2$.}
Then $Z$ is a smooth del Pezzo surface and $\varphi$ is a $\PP^1$-bundle.
In particular, $\hat X$ is smooth.
In this case 
$\hh(\hat{X})=0$ and $|\Sing(X)|= 21+\uprho(Z)\ge 22$ by Proposition~\ref{prop:Sing:r}.

\subsection*{Case $\dim Z=1$.}
Then $Z\simeq \PP^1$ and $\varphi$ is a quadric bundle.
By \cite[Theorem~1.11]{KP:dP} the variety $\hat{X}$ can be realized as a complete intersection in $\hat{X}\subset \PP^{1}\times \PP^{6}$ of three divisors $D_1$, $D_2$, $D_3$ of bidegree $(1,1)$ 
and one divisor $D$ of bidegree $(1,2)$
\cite[Proposition~4.14]{KP:dP}, \cite[Theorem~2.3]{Takeuchi:DP}.
Let $h_1$ (resp. $h_2$) be the pull-back on $\PP^{1}\times \PP^{6}$
of the hyperplane class of $\PP^1$ (resp. $\PP^6$). 

First, assume that 
$\hat{X}$ is smooth. From the exact sequence 
\begin{equation*}
0 \longrightarrow \TTT_{\hat{X}} \longrightarrow \TTT_{\PP^1\times \PP^6}|_X \longrightarrow \NNN_{\hat{X}/\PP^1\times \PP^6} \longrightarrow 0
\end{equation*}
we conclude that the Chern polynomial of $\hat{X}$ has the form
\begin{equation*}
\cc_t(\hat{X})=\left.\frac{\cc_t(\PP^1\times \PP^6)}{ \cc_t(\NNN_{\hat{X}/\PP^1\times \PP^6})}\right|_{\hat{X}}
=
\left.\frac{(1+h_1t)^2\cdot (1+h_2t)^7}{ (1+(h_1+h_2)t)^{3}\cdot (1+(h_1+2h_2)t)}\right|_{\hat{X}}.
\end{equation*}
From this we obtain
\begin{equation*}
\chit(\hat{X})= \cc_3(\hat{X})= -2h_2^6\cdot h_{1}=-2\qquad\text{and}\qquad \hh (\hat{X})=4.
\end{equation*}
Then the assertion follows from Proposition~\ref{prop:Sing:r}.
Now assume that $\hat{X}$ is singular and let $\sigma: \tilde X\to \hat{X}$ 
be the blowup of all singular points.
Again by Proposition~\ref{prop:Sing:r} we have 
\begin{equation*}
\hh(\tilde X) = \hh(\hat{X}^{\mathrm{sm}}) - |\Sing(\hat{X})|= 4 - |\Sing(\hat{X})|,
\end{equation*}
where $\hat{X}^{\mathrm{sm}}$ is a general smooth member of the linear system $|\hat{X}|$
on $Z$. Therefore,
\begin{equation*}
|\Sing(X)|=22 - \hh(\tilde X ) =18+|\Sing(\hat{X})|.
\end{equation*}

\subsection*{Case $\dim Z=3$.}
Then by Construction~\ref{construction} we have the diagram \eqref{eq:construction:di}
where 
$\varphi$ is the blowup of a smooth point,
$\hat Z$ is an almost del Pezzo threefold of degree~$2$,
and $\xi'$ is the morphism to the anticanonical model $Z=\hat Z_{\mathrm{can}}$
which is a del Pezzo threefold of degree~$2$.
Clearly, $\hat{X}$, $Z$, and $\hat Z$ have only nodal singularities.
Moreover, the economic resolution $\tilde X$ of~$X$ is obtained from the economic resolution 
$\tilde Z$ of $Z$
by blowing up only points and smooth rational curves.
Hence, $\hh(\tilde X)=\hh(\tilde Z)$.
For the smoothing $Z'$ of $Z$ we have $\hh(Z')=10$ (see \cite[\S~12.2]{IP99}).
Therefore, Proposition~\ref{prop:Sing:r} we have 
\begin{equation*}
|\Sing(X)|-20=\rr(X)-\hh(X)= \rr(Z_{\mathrm{can}})-\hh(\tilde Z_{\mathrm{can}})+1=|\Sing(Z_{\mathrm{can}})|+1-9.
\end{equation*}
We obtain $|\Sing(X)|=12+|\Sing(Z_{\mathrm{can}})|\ge 12$.
The last assertion in \ref{thm:Q-fact:n} follows from the example below.
\end{proof}

\begin{example}
Let $X\subset \PP(1^3,2,3)$ be given by the equation
\begin{equation}
\label{eq:E8-1}
z^2+y^3+\phi_4(x_1,x_2,x_3)y-\phi_3(x_1,x_2,x_3)^2=0,
\end{equation} 
where $\phi_4$ and $\phi_3$ are general homogeneous polynomials of degree~$4$ and $3$, respectively.
Then~$X$ is a double Veronese cone containing two planes $\{y=z\pm \phi_3(x_1,x_2,x_3)=0\}$.
The singular locus of~$X$ consists of $12$ nodes at $\{z=y=\phi_4=\phi_3=0\}$.
\end{example}

Now we give an example of $\QQ$-factorial double Veronese cone having $21$ nodes.
\begin{example}
\label{ex:PSL27}
Let $G:=\PSL_2(\bF_7)$ be the Klein simple group of order $168$.
This group has two three-dimensional complex representations.
They define a unique conjugacy class of embeddings $G\subset \SL_3(\Bbbk)$, hence $G$ faithfully acts on
$\Bbbk[x_1,x_2,x_3]$ and on $\PP^2$.
The ring of invariants $\Bbbk[x_1,x_2,x_3]^G$
is generated by four homogeneous polynomials $\phi_4$, $\phi_6$, $\phi_{14}$, $\phi_{21}$,
where $\deg \phi_d=d$ (see {\cite{Klein1878:e}} or {\cite[\S\S~139-140]{Weber:Algebra2}}).
In some coordinate system the invariants $\phi_4$ and $\phi_6$ have the form
\begin{eqnarray}
\label{eq:phi4}
\phi_4&=&x_1^3x_3+x_2^3x_1+x_3^3x_2,
\\
\label{eq:phi6}
\phi_6&=&5 x_1^2x_2^2x_3^2-x_1^5x_2-x_2^5x_3-x_3^5x_1.
\end{eqnarray}
Now let~$X$ be given by the equation
\begin{equation}
\label{eq:eq-7}
z^2+y^3+3\phi_4(x_1,x_2,x_3) y+\phi_6(x_1,x_2,x_3)=0.
\end{equation}
The $G$-action on $\Bbbk[x_1,x_2,x_3]$ extends to~$X$, where the action on $y$ and $z$ are supposed to be trivial.
In this case the discriminant $\Di$ is given by $4\phi_4^3+\phi_6^2=0$.

By using Macaulay2 \cite{M2} we check that~$X$ has exactly 21 singular points and all these points are nodes
(see Appendix~\ref{sect:code}).
It remains to show that~$X$ is $\QQ$-factorial. Assume the contrary: $\rr(X)\ge 2$. 
By Table~\ref{tab} we have $\rr(X)\le 8$.
Moreover, if $\rr(X)=8$, then the number of singular points equals $28$ by \cite[Theorem~7.1]{P:GFano1}.
Therefore, $\rr(X)<8$. 

First, consider the case where $G$ faithfully acts on $\Cl(X)$.
Then the representation of $G$ on the orthogonal complement $K_X^\perp\subset \Cl(X)\otimes \QQ$
if faithful. Since the group $G$ has no faithful representations 
of degree~$\le 5$ defined over $\QQ$, we have $\dim K_X^\perp\ge 6$ and the only possibility is $\rr(X)=7$.
Then the root system $\boldsymbol{\Delta}(X)$ is of type \type{E}{6} 
(see Table~\ref{tab}). However, in this case the automorphism group of $\boldsymbol{\Delta}(X)$
has order $4\cdot 25920$ \cite[Plate~V]{Bourbaki:Lie:4-6}, hence $\Aut(\boldsymbol{\Delta}(X))$ does not contain subgroups isomorphic to $G=\PSL_2(\bF_7)$,
a contradiction.

Now, assume that the action of $G$ on $\Cl(X)$ is trivial.
Then in the notation of Construction~\ref{construction} the $G$-action  lifts to $\hat X$ and the contraction $f: \hat X\to\hat  Z$
is $G$-equivariant.
Note that $\xi$ is an isomorphism over $O$ and the point $\hat O:=\xi^{-1}(O)$ is fixed by $G$. 
If $f$ is a quadric bundle over $\PP^1$, then $G$ must act faithfully either on the base $\hat  Z=\PP^1$ or 
on the general geometric fiber, which is isomorphic to $\PP^1\times \PP^1$. Clearly, this is impossible.
If $f$ is a $\PP^1$-bundle, then $\hat Z$ is a smooth surface admitting an effective $G$-action 
and having a fixed point $f(\hat O)$. But then the induced two-dimensional representation of $G$ in $T_{f(\hat O), \hat Z}$
must be faithful, a contradiction.
Finally, suppose that $f$ be birational,
Then the $f$-exceptional divisor is $G$-invariant and its image $\Pi\subset X$ is a plane,
which is also $G$-invariant. But then the ramification divisor $R_X$ of the double cover 
$\pi: X\to Y$ cuts out on $\Pi$ a $G$-invariant cubic curve (because $R_X\sim 3A_X$ and $R_X$ is irreducible).
Since the Klein simple group $G$ cannot faithfully act on a curve of genus $\le 1$,
the induced $G$-action on $\Pi$ is trivial.
Since the restriction $\tau_{\Pi}: \Pi\to B$ is dominant, the $G$-action on $B$ is trivial as well, a contradiction.
\end{example}

Note that, similar to \eqref{eq:eq-7}, there exists a pencil of double Veronese cones 
\begin{equation*}
z^2+y^3+\lambda \phi_4(x_1,x_2,x_3) y+\phi_6(x_1,x_2,x_3)=0
\end{equation*}
admitting actions of the Klein simple group $\PSL_2(\bF_7)$.
This  will discussed  in the forthcoming paper \cite{P:V1:Klein}.

\section{The automorphism group}
\label{sec:aut}

The automorphism group of a Fano variety is a linear algebraic group (see e.g. \cite[Theorem~1.3(vii)]{P:Fano25e}).
In the case of double Veronese cones we can say more.
We say that a double Veronese cone~$X$ is \textit{equianharmonic} if the absolute invariant of the general fiber equals $0$, i.e. in the presentation  \eqref{eq:eq} we have $\phi_4=0$.
Note that the \textit{harmonic} case $\phi_6=0$ does not occur in our situation because in this case 
the singularities of $X$ are not isolated, hence they are not terminal. 

\begin{proposition}
Let~$X$ be a double Veronese cone. The natural homomorphism 
\begin{equation}
\label{eq:homo:PGL:0}
\Aut(X)\longrightarrow \GL(T_{O,X})
\end{equation} 
is injective. Let $\Aut(X)_K$ be the kernel of the induced homomorphism
\begin{equation}
\label{eq:homo:PGL}
\Aut(X)\longrightarrow \Aut(\PP(T_{O,X})).
\end{equation} 
Then 
\begin{equation}
\label{eq:homo:PGL:1}
\Aut(X)_K\simeq 
\begin{cases}
\ZZ/6\ZZ
&\text{if~$X$ is equianharmonic},\\
\ZZ/2\ZZ
&\text{otherwise}.
\end{cases}
\end{equation} 
\end{proposition}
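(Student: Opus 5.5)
Every automorphism of $X$ preserves $A_X=-\frac12K_X$, and hence the linear systems $|A_X|$, $|2A_X|$, $|3A_X|$. It therefore acts on the graded ring $\bigoplus_m H^0(X,mA_X)$ and is induced by an automorphism of $\PP(1^3,2,3)$ preserving $X$. It also fixes the unique base point $O$, so we get the homomorphism \eqref{eq:homo:PGL:0}. The plan is to work in the coordinates of \eqref{eq:eq}, where $O=(0,0,0,0,1)$ after normalizing. The key observation is that the projectivized tangent space $\PP(T_{O,X})$ is naturally identified with the base $B=\PP^2$ of $\tau$. Indeed, the members of $|A_X|$ are smooth at $O$ (Theorem~\ref{thm:preV1}), and $S\mapsto T_{O,S}$ identifies $|A_X|=\check B$ with the planes in $T_{O,X}$. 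Equivalently, the fibers of $\tau$ pass through $O$ with distinct tangent directions, which is clear in the chart $z=1$, where $x_1,x_2,x_3$ are local coordinates at $O$. So \eqref{eq:homo:PGL} is the action of $\Aut(X)$ on $B$, and $\Aut(X)_K$ consists of the automorphisms acting trivially on $B$.

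First I describe the group of graded automorphisms of $\PP(1^3,2,3)$ preserving the normal form \eqref{eq:eq}. Such an automorphism has the form
\[
x\mapsto Mx,\quad y\mapsto ay+q_2(x),\quad z\mapsto bz+c_1(x)y+c_3(x).
\]
Suppose the equation is preserved up to a scalar. Comparing the coefficients of $z\cdot(\ldots)$ gives $c_1=c_3=0$, because no term $zy$ or $z\cdot(\text{cubic in }x)$ may appear. Similarly, the vanishing of the $y^2$ term forces $q_2=0$. So the automorphism is $(x,y,z)\mapsto(Mx,ay,bz)$ with $b^2=a^3$, together with the conditions $\phi_4(Mx)=a^2\phi_4(x)$ and $\phi_6(Mx)=a^3\phi_6(x)$, all up to the weighted $\Gm$-scaling $(t x,t^2y,t^3z)$.

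For injectivity of \eqref{eq:homo:PGL:0}, I compute the differential at $O$ in the chart $z=1$. The local coordinates there are $\bar x_i=x_i/z^{1/3}$, and the differential is $b^{-1/3}M$ (after fixing $b=1$ via the scaling). If it is the identity, then $M=\mathrm{id}$ and $b=1$. Then $a^3=1$ and $a^2\phi_4=\phi_4$, $a^3\phi_6=\phi_6$, and I must check that this forces the automorphism to be trivial modulo scaling. This requires some care: $a$ a primitive cube root of unity still satisfies $a^3=1$, but it is excluded by the scaling normalization only if $\phi_4=0$. The point to verify is that $b=1$ and $M=\mathrm{id}$ pin down $t^3=1$, $t=1$, so $a=1$.

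For the kernel $\Aut(X)_K$, the condition is that $M$ is scalar, and after weighted rescaling $M=\mathrm{id}$. The conditions become $a^2\phi_4=\phi_4$, $a^3\phi_6=\phi_6$ and $b^2=a^3$. Since $\phi_6\ne0$ we get $a^3=1$, and $b=\pm1$. If $\phi_4\ne0$, then $a^2=1$ forces $a=1$, giving $\{b=\pm1\}\simeq\ZZ/2\ZZ$, generated by the Bertini involution \eqref{eq:eq:Bertini}. If $\phi_4=0$, then $a$ ranges over the cube roots of unity and $b=\pm1$, giving $\ZZ/3\times\ZZ/2\simeq\ZZ/6\ZZ$. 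I expect the main obstacle to be the bookkeeping of the residual weighted scaling. After normalizing $M=\mathrm{id}$, the remaining freedom is $t$ with $t=1$ on the $x$'s, so there is no residual freedom. One must also confirm that the element with $a$ a cube root of unity is not secretly the identity: it acts nontrivially on $y$ while fixing the $x_i$.
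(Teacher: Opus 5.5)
Your computation of $\Aut(X)_K$ is fine, but the injectivity step fails as written. The point $O$ is not $(0,0,0,0,1)$. That is an orbifold point of $\PP(1^3,2,3)$ and does not lie on $X$, since there $z^2=1\neq 0$. In the form \eqref{eq:eq} one has $O=(0,0,0,-1,1)$. Your formula ``the differential is $b^{-1/3}M$'' is therefore wrong.

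The conclusion you aim for, ``$b=1$ and $M=\mathrm{id}$ force $a=1$'', is also false. If $\phi_4=0$, the triple $(M,a,b)=(\mathrm{id},\zeta_3,1)$ is a nontrivial automorphism of $X$, as you yourself note. By your formula it would have identity differential, so it would contradict the very statement you are proving. No bookkeeping of the residual scaling can exclude it, since $M=\mathrm{id}$ already forces $t=1$ while $a$ stays free.

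The repair is short. Since $b^2=a^3$, replace $(M,a,b)$ by $(tM,t^2a,t^3b)$ with $t=a/b$. This puts every automorphism uniquely in the form $(x,y,z)\mapsto(Mx,y,z)$ with $\phi_4\circ M=\phi_4$ and $\phi_6\circ M=\phi_6$; uniqueness holds because $t^2=t^3=1$ gives $t=1$.

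Now compute the differential correctly. The chart $\{z\neq0\}$ is $\Aff^4_{x_1,x_2,x_3,y}/\mu_3$, and $\mu_3$ acts freely near $(0,0,0,-1)$. Moreover $3y^2+\phi_4=3\neq0$ at $O$. Hence $x_1,x_2,x_3$ are local coordinates of $X$ at $O$, and the differential is exactly $M$; for a general representative it is $(a/b)M$. Injectivity is then immediate, and $\Aut(X)_K=\{\mu\cdot\mathrm{id} : \mu^6=1,\ \mu^4\phi_4=\phi_4\}$. In this normalization your element $(\mathrm{id},\zeta_3,1)$ becomes $\zeta_3\cdot\mathrm{id}$, and the Bertini involution becomes $-\mathrm{id}$.

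Alternatively, argue as the paper does. The kernel of \eqref{eq:homo:PGL:0} lies in the finite group $\Aut(X)_K$, and a finite group fixing $O$ and acting trivially on $T_{O,X}$ is trivial.

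For comparison, the paper does not obtain $\Aut(X)_K$ by a normal-form computation. It embeds $\Aut(X)_K$ into the automorphism group of the generic fiber of $\varphi$, an elliptic curve over $\Bbbk(B)$ with origin cut out by the section $E$. That group is $\ZZ/6\ZZ$ or $\ZZ/2\ZZ$ according to whether $\jj\equiv0$. The paper then exhibits the Bertini involution and $y\mapsto\zeta_3y$. Your coordinate route, once corrected, is more elementary and in fact describes all of $\Aut(X)$.
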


\begin{proof}[Sketch of the proof]
The projectivization $\PP(T_{O,X})$ is naturally identified with the exceptional divisor
of the blowup $\sigma: \tilde X\to X$ and with the base of the elliptic fibration 
$\varphi: \tilde X\to B=\PP^2$. Hence $\Aut(X)_K$ acts on the general fiber $X_\eta$ of $\varphi$ 
faithfully, that is, $\Aut(X)_K\subset \Aut(X_\eta)$, where the group $\Aut(X_\eta)$
is described by the right hand side of \eqref{eq:homo:PGL:1}. 
On the other hand, $\Aut(X)_K$ always contains the Bertini involution (see \eqref{eq:eq:Bertini})
and in the case $\phi_4=0$ by  \eqref{eq:eq} it contains the element of order $3$ acting via 
\begin{equation*}
(x_1,x_2,x_3,y,z) \longmapsto (x_1,x_2,x_3,\zeta_3 y,z).
\end{equation*}
This proves the last assertion.
This implies also that the kernel of the homomorphism \eqref{eq:homo:PGL:0} is finite, hence it is trivial
(see e.g. \cite[\S~2.2]{Akhiezer1995}). 
\end{proof}

Thus we have an exact sequence 
\begin{equation}
\label{eq:exact-sequence}
1 \longrightarrow \Aut(X)_K\longrightarrow \Aut(X)\longrightarrow\Aut(X)_B \longrightarrow 1,
\end{equation} 
where $\Aut(X)_B$ is the image of $\Aut(X)$ in $\PGL_3(\Bbbk)$ under the homomorphism \eqref{eq:homo:PGL}.
The group $\Aut(X)_B$ coincides with the image of 
the induced action of $\Aut(X)$ on the base of the elliptic fibration $\varphi: \tilde X\to B=\PP^2$.
By construction all the maps in the diagram~\eqref{eq:preV1} 
are $\Aut(X)$-equivariant.
Note also that the curves $\Di$, $\Lambda_6$, and $\Lambda_4$ (if $\Lambda_4\neq\PP^2$)
are invariant with respect to the action of $\Aut(X)_B$ (see e.g. \eqref{eq:LambdaLambdaDi}).

Let $\Aut(X)^0$ be the identity component of $\Aut(X)$.
Then~\eqref{eq:exact-sequence} induces 
an exact sequence
\begin{equation}
\label{eq:exact-sec}
1\longrightarrow (\Aut(X)^0)_K \longrightarrow \Aut(X)^0 \longrightarrow \Aut(X)^0_B \longrightarrow 1, 
\end{equation}
where $(\Aut(X)^0)_K:=\Aut(X)_K\cap \Aut(X)^0$ is a finite group and 
$\Aut(X)^0_B$ is a connected linear algebraic group that acts on $B=\PP^2$ faithfully. 

In the case of a smooth double Veronese cone the group $\Aut(X)$ is finite (see e.g. \cite[Lemma~4.4.1]{KPS:Hilb}).
Below we discuss the finiteness of the automorphism group of singular double Veronese cones.

\begin{theorem}
\label{thm:aut}
Let~$X$ be a double Veronese cone whose automorphism group $\Aut(X)$ is infinite. Then the identity component $\Aut(X)^0\subset \Aut(X)$
is isomorphic to either $\Gm$ or $\Ga$.
The explicit equations of~$X$ and corresponding actions are described in 
Proposition~\xref{prop:torus} and Proposition~\xref{prop:Ga}, respectively.
If~$X$ is nodal, then $\Aut(X)$ is finite.
\end{theorem}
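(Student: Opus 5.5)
By the exact sequence \eqref{eq:exact-sec}, $(\Aut(X)^0)_K$ is finite. Hence $\Aut(X)^0$ is infinite if and only if $\Aut(X)^0_B\subset\PGL_3(\Bbbk)$ is a nontrivial connected group, and $\dim\Aut(X)^0=\dim\Aut(X)^0_B$. A finite central extension of a connected group by a finite group has the same identity-component type up to isogeny. So it suffices to show two things: $\Aut(X)^0_B$ is one-dimensional, hence isomorphic to $\Gm$ or $\Ga$; and $\Aut(X)^0$ is then also $\Gm$ or $\Ga$, since a connected one-dimensional linear algebraic group is of this form.

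\textbf{Step 1: reduce to the invariant curves.} The group $\Aut(X)^0_B$ preserves $\Di$ and $\Lambda_6$, and also $\Lambda_4$ when $\Lambda_4\neq\PP^2$. It also preserves every fiber of $\jj$, because $\jj$ is $\Aut(X)$-equivariant with respect to a connected group acting on $\PP^1$ through a finite group of symmetries of $j$. If $\phi_4\ne0$, then $\jj$ is a nonconstant rational map, so $\Aut(X)^0_B$ preserves every member of the pencil $\langle 3\Lambda_4,\,2\Lambda_6\rangle$. This pencil has no fixed components by Corollary~\ref{cor:sing:comp}. In the equianharmonic case $\phi_4=0$, it preserves the sextic $\Lambda_6$, which is reduced by Corollary~\ref{cor:sing:comp}\ref{cor:sing:comp:=}.

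\textbf{Step 2: bound the dimension.} A connected subgroup of $\PGL_3$ of dimension $\ge2$ preserving a plane curve of degree $\ge 4$, or all members of a pencil with no fixed component, forces the curves to be unions of lines through a point, or of orbit closures of a $2$-dimensional group. Concretely, I would go through the connected subgroups of $\PGL_3$ of dimension $\ge 2$: tori $\Gm^2$, $\Ga^2$, $\Ga\rtimes\Gm$, Borel-type groups, $\PGL_2$, and so on. For each I would check that the invariant sextics (resp.\ invariant pencils) have multiple components or are cones. Cones are ruled out via Corollary~\ref{cor:w-blowup} applied at the vertex: there $\mult\Lambda_4=4$ and $\mult\Lambda_6=6$, which is non-terminal. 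Multiple components violate isolatedness (Lemma~\ref{lemma:sing0}). This shows $\dim\Aut(X)^0=1$.

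\textbf{Step 3: normal forms.} Let a one-parameter subgroup act on $B$. Lift it linearly to $\PP(1^3,2,3)$ by acting on $y,z$ with appropriate weights, and write the semi-invariance conditions on $\phi_4,\phi_6$. This gives the equations of Propositions~\ref{prop:torus} and~\ref{prop:Ga}: for $\Gm$, $\phi_4,\phi_6$ are weighted-homogeneous for a diagonal action $\mathrm{diag}(t^{a_1},t^{a_2},t^{a_3})$; for $\Ga$, they are invariants of a unipotent one-parameter group. I expect Step~2 to be the main obstacle, in particular the case analysis excluding two-dimensional solvable groups such as $\Gm^2$ with $\phi_4,\phi_6$ monomial-like. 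In that case I would show that $\Gm^2$-semi-invariant sextics are products of coordinate lines with multiplicities, and that terminality fails.

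\textbf{Step 4: nodal case.} If $X$ is nodal, then $\Sing(X)$ is nonempty by assumption (otherwise $X$ is smooth and $\Aut(X)$ is finite, as cited). The idea is that the structure of $\Lambda_4$, $\Lambda_6$ forced by a $\Gm$- or $\Ga$-action is incompatible with Corollary~\ref{cor:nodal}. A $\Gm$- or $\Ga$-invariant curve $\Lambda_4$ is a union of orbit closures, which pass through the fixed points. I would show that such curves are either non-reduced, or have a singular point at a fixed point lying on $\Lambda_6$; the latter contradicts Corollary~\ref{cor:nodal}\ref{cor:nodal:1} and~\ref{cor:nodal:Lambda4}. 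Alternatively, the singular points lie in orbits, and finiteness of $\Sing(X)$ forces them to sit over fixed points of $B$, where the local equation \eqref{eq:eq-chart} is quasi-homogeneous. A quasi-homogeneous node must have weights that are compatible with the quadratic form being nondegenerate, and checking the explicit normal forms of Propositions~\ref{prop:torus} and~\ref{prop:Ga} shows the singular point is then never an ordinary double point.
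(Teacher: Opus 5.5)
\textbf{There is a genuine gap in Step~2.} The heart of the theorem is the bound $\dim\Aut(X)^0_B\le 1$, and Step~2 does not prove it. You announce an enumeration of connected subgroups of $\PGL_3$ (``and so on'') but never carry it out. The general principle you state (``unions of lines through a point, or of orbit closures of a $2$-dimensional group'') does not by itself give a contradiction.

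The one case you describe, $\Gm^2$, also omits the decisive ingredient. Knowing that $\phi_4,\phi_6$ are products of coordinate lines does not make $X$ non-terminal. For example, $\phi_4=x_1^4$, $\phi_6=x_2^3x_3^3$ gives a double Veronese cone whose only singularities are two isolated points of type \type{cD}{4}; this is case~\ref{0,1,2-6} of Table~\ref{table:Gm}. What excludes $\Gm^2$ is the compatibility $3\wt(\phi_4)=2\wt(\phi_6)$, which must hold for every character of the torus (equivalently, $4\phi_4^3+27\phi_6^2$ is semi-invariant). For monomials this forces $3l_i=2k_i$, contradicting Lemma~\ref{lemma:sing0}.

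The paper avoids any enumeration. The singular points of $C=\Di\cup\Lambda_4\cup\Lambda_6$ are fixed by $\Aut(X)^0_B$, so there are two cases:
\begin{itemize}
\item $C$ consists of lines. Concurrent lines contradict Corollary~\ref{cor:w-blowup}, and a triangle is the $\Gm^2$ case above.
\item Some component $C_1$ is not a line. Then the group acts faithfully on $C_1$, so $C_1$ is rational and the group lies in a Borel subgroup of $\Aut(C_1)\subset\PGL_2$. This $\Ga\rtimes\Gm$ is excluded only after the $\Ga$-normal form is known: the torus acts on the pencil $\{D_\lambda\}$, and Lemma~\ref{lemma:sing0} applies again.
\end{itemize}

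\textbf{A shorter route from your own Step~1.} If $\phi_4\neq0$, then $\jj$ is non-constant (otherwise $\phi_4^3$ and $\phi_6^2$ are proportional, contradicting Lemma~\ref{lemma:sing0}). Hence every orbit of $\Aut(X)^0_B$ is at most one-dimensional. A two-dimensional connected subgroup of $\PGL_3$ is either a maximal torus, which has an open orbit and is therefore excluded, or it contains a $\Ga$. That $\Ga$ is of one of two kinds:
\begin{itemize}
\item Elation type: its invariant curves are lines through a point, so $\Lambda_4$ and $\Lambda_6$ are cones, contradicting Corollary~\ref{cor:w-blowup}.
\item Regular type: its orbits are the conics $D_\lambda$, and their common stabilizer is only one-dimensional.
\end{itemize}
If $\phi_4=0$, the same trichotomy applied to the reduced invariant sextic $\Lambda_6$ works. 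Incidentally, the pencil $\langle 3\Lambda_4,2\Lambda_6\rangle$ can have a fixed part, coming from common reduced components of $\Lambda_4$ and $\Lambda_6$; Corollary~\ref{cor:sing:comp} does not exclude this, though it is harmless.

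\textbf{Steps~3 and~4 are asserted rather than proved.}
\begin{itemize}
\item Step~3 first needs Lemma~\ref{lemma:aut}: $\Aut(X)^0$ acts through $\PP(1^3,2,3)$, via a linearization of $\OOO_X(A_X)$ using $\Pic(\Aut(X)^0)=0$. You then still have to do the full weight analysis of Proposition~\ref{prop:torus} and the two nilpotent classes in Proposition~\ref{prop:Ga}. Saying ``this gives the equations'' does not establish them.
\item In Step~4 your first route is sound, but it needs a degree count. Every invariant irreducible curve except possibly one line passes through the fixed point where $\Lambda_4$ is singular. By Corollary~\ref{cor:nodal}\ref{cor:nodal:Lambda6} a sextic cannot be supported on one line, so $\Lambda_6$ passes through that point, contradicting Corollary~\ref{cor:nodal}\ref{cor:nodal:1}.
\item Your second route in Step~4 is not an argument. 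Quasi-homogeneous nodes certainly exist, and ``checking the normal forms'' is exactly what remains to be done.
\end{itemize}
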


The rest of this section is devoted to the proof of Theorem~\ref{thm:aut}.
Throughout this section we assume that $X$ is a double Veronese cone.

\begin{lemma}
\label{lemma:group}
Assume that the group $\Aut(X)$ is infinite.
Then one of the following holds:
\begin{enumerate}
\item 
\label{lemma:group:Gm}
$\Aut(X)^0\simeq\Gm$,
\item 
\label{lemma:group:Ga}
$\Aut(X)^0\simeq\Ga$,
\item 
\label{lemma:group:GaGm}
$\Aut(X)^0$ is isomorphic to 
a semi-direct product $\Ga \rtimes \Gm$ \textup(for some homomorphism $\Gm\to \Aut(\Ga)$\textup).
\end{enumerate}
\end{lemma}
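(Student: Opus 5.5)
Since $(\Aut(X)^0)_K$ is finite and $\Aut(X)^0$ is connected, the sequence \eqref{eq:exact-sec} shows that $\Aut(X)^0$ is an isogenous cover of $\Aut(X)^0_B$. So the plan is to classify the connected subgroup $G:=\Aut(X)^0_B\subset\PGL_3(\Bbbk)$ and then lift the answer back. By hypothesis $G$ has positive dimension. It preserves the discriminant curve $\Di$ and the curve $\Lambda_6\neq\PP^2$ (Corollary~\ref{cor:sing:comp}), and also $\Lambda_4$ whenever $\Lambda_4\neq\PP^2$. Every component of these curves is $G$-invariant, because $G$ is connected.

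\textbf{Step 1: $G$ is solvable of dimension $\le 2$ and has a fixed point.} Let $\Lambda\subset\Lambda_6$ be an irreducible component. If $\Lambda$ is not a line, its stabilizer in $\PGL_3$ is small: a conic has stabilizer $\PGL_2$, and an irreducible curve of degree $\ge3$ has stabilizer of dimension $\le 1$. A line, or a set of lines, has a parabolic-type stabilizer. I would use the stronger restriction coming from the pencil $\langle 3\Lambda_4, 2\Lambda_6\rangle$ and the $\jj$-map, as in the proof of Proposition~\ref{prop:finite}. Since $\jj$ is $G$-equivariant and $\PP^1$ carries only the trivial action here (the $\jj$-values are canonical), $\jj$ is $G$-invariant. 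Hence $G$ preserves every member of that pencil. If $\phi_4\neq0$, the pencil has no fixed component, so its general member is an irreducible curve of degree $12$ or a multiple of one, and $G$ acts on it. In the equianharmonic case I would argue directly with $\Lambda_6$ instead.

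The key step is to exclude reductive non-abelian groups. If $G\supset\PGL_2$ or $\SL_2$-type subgroups, then the invariant sextic $\phi_6$ (together with $\phi_4$) is forced into a form with non-isolated singularities, violating Lemma~\ref{lemma:sing0} or Corollary~\ref{cor:w-blowup}. For instance, $\SO_3$-invariants are powers of a conic $q$: then $\phi_6=cq^3$ and $\phi_4=c'q^2$, so $q\mid\phi_4$ and $q^2\mid\phi_6$. Similarly, a $2$-dimensional torus $T$ fixes the three coordinate points, and $T$-semi-invariant $\phi_4,\phi_6$ are monomials. Checking isolatedness of singularities (Lemma~\ref{lemma:sing0}) and Corollary~\ref{cor:w-blowup} at the coordinate points should exclude this, since some coordinate line would divide $\phi_4$ once and $\phi_6$ twice, or the multiplicities at a vertex would be $\ge4$ and $\ge6$. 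The unipotent groups of dimension $\ge2$ would be excluded similarly, via a fixed point $Q$ at which $\Lambda_4$ and $\Lambda_6$ acquire the forbidden multiplicities.

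\textbf{Step 2: conclusion.} After Step 1, $G$ is connected solvable of dimension $1$ or $2$, and not a $2$-torus. A connected solvable linear group is $T\ltimes U$. The allowed cases are therefore $\Gm$, $\Ga$, and $\Ga\rtimes\Gm$; the group $\Ga^2$ is excluded as above. The finite central kernel lifts to a connected group isogenous to the same one. The only connected groups isogenous to $\Gm$, $\Ga$, or $\Ga\rtimes\Gm$ are groups of the same type, so $\Aut(X)^0$ is as in the lemma.

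The main obstacle is the case analysis in Step 1 that excludes $\dim G\ge2$ unless $G\simeq\Ga\rtimes\Gm$. This means bounding the unipotent radical to dimension $\le1$ and the torus to rank $\le1$. For each I would write down the normal forms of the invariant or semi-invariant $\phi_4,\phi_6$ and test them against Lemma~\ref{lemma:sing0} and Corollary~\ref{cor:w-blowup}.
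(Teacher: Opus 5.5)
Your Step~1 has a genuine gap at the exclusion of a two-dimensional torus. Your overall route is different from the paper's: you classify $G=\Aut(X)^0_B\subset\PGL_3(\Bbbk)$ abstractly and test normal forms of $\phi_4,\phi_6$. Several pieces of it are fine:
\begin{itemize}
\item the passage through the finite kernel in Step~2;
\item the $\mathrm{SO}_3$ case;
\item the two-dimensional unipotent subgroups. In each of the three conjugacy classes the invariant curves are either a single line or lines through one point, so Corollary~\ref{cor:w-blowup} applies.
\end{itemize}

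The torus case fails as you state it, because Lemma~\ref{lemma:sing0} and Corollary~\ref{cor:w-blowup} do not rule out monomial $\phi_4,\phi_6$. Take $\phi_4=x_2^2x_3^2$ and $\phi_6=x_1^4x_2x_3$. No coordinate line divides $\phi_4$ while its square divides $\phi_6$. The pairs $(\mult_Q\Lambda_4,\mult_Q\Lambda_6)$ at the three vertices are $(4,2)$, $(2,5)$ and $(2,5)$. In fact $z^2+y^3+x_2^2x_3^2\,y+x_1^4x_2x_3=0$ is a double Veronese cone with exactly three singular points, all isolated cDV. So no test based on isolatedness or terminality can exclude these forms. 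What actually prevents the diagonal $2$-torus from acting is that it does not preserve $\Di=\{4x_2^6x_3^6+27x_1^8x_2^2x_3^2=0\}$; only the subtorus $(x_1,tx_2,t^{-1}x_3)$ does.

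The missing idea is the coupling of $\phi_4$ and $\phi_6$ through the discriminant. You noted that $\jj$ is $G$-invariant, but you did not use it at the one place where it is indispensable. A $2$-torus has a dense orbit on $\PP^2$, so an invariant $\jj$ must be constant. Hence either $\phi_4=0$, or $\phi_4^3$ is proportional to $\phi_6^2$.
\begin{itemize}
\item In the second case $\phi_4=\alpha m^2$ and $\phi_6=\beta m^3$ for a quadratic form $m$, and Lemma~\ref{lemma:sing0} gives a contradiction.
\item In the first case $\phi_6$ is a sextic monomial, so it has a square factor, and Lemma~\ref{lemma:sing0} applies again.
\end{itemize}
This is exactly the paper's triangle case, where semi-invariance of $4\phi_4^3+27\phi_6^2$ forces $3l_i=2k_i$.

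You would also still have to justify the reduction you yourself call the main obstacle. Namely, every connected subgroup of $\PGL_3$ that is non-solvable or of dimension $\ge 3$ contains an $A_1$-subgroup, a $2$-torus, or a $2$-dimensional unipotent subgroup.

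The paper avoids this classification altogether:
\begin{itemize}
\item If $\Di\cup\Lambda_4\cup\Lambda_6$ has a component $C_1$ that is not a line, then $G$ acts faithfully on $C_1$. It therefore embeds in $\Aut(\PP^1)$ via the normalization.
\item If $G$ then has a fixed point on $C_1$, it is a Borel subgroup of $\PGL_2$.
\item If it has no fixed point, $C_1$ is a conic with $\Lambda_6=3C_1$, contradicting Corollary~\ref{cor:sing:comp}.
\item If every component is a line, the lines are either concurrent, which Corollary~\ref{cor:w-blowup} excludes, or they form a triangle. The triangle is the torus case above.
\end{itemize}
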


\begin{proof}Since the group $\Aut(X)_K$ is finite, 
it follows from \eqref{eq:exact-sequence} that 
$\dim \Aut(X)=\dim \Aut(X)_B$.
Let $C=\cup C_i$ be the union of components of $\Di$, $\Lambda_6$, and $\Lambda_4$ 
(if $\Lambda_4\neq \PP^2$). Then~$C$ is 
an $\Aut(X)_B$-invariant plane curve and every singular point of~$C$
is fixed by $\Aut(X)^0_B$. 
We may assume that $\dim \Aut(X)^0\ge 2$ (otherwise we are in  the case~\ref{lemma:group:Gm} or \ref{lemma:group:Ga}).

First, consider the case where all the components of~$C$ are lines.
If all these lines pass through one point $Q\in \PP^2$, then
$\mult_Q(\Lambda_6)=6$ and $\mult_Q(\Lambda_4)=4$
(if $\Lambda_4\neq \PP^2$). This contradicts Corollary~\ref{cor:w-blowup}.
Thus, $C$ has three components forming a triangle: 
the union of three lines $C_1,C_2,C_3$ in general position. 
Then $\Aut(X)^0_B$
must be an algebraic torus and $\Aut(X)^0$ must be  an algebraic torus as well.
By our assumption $\dim \Aut(X)^0_B>1$. Then $\Aut(X)^0_B\simeq (\Gm)^2$ and 
$C=C_1+C_2+C_3$.
We may assume that the equations of the lines $C_1,C_2,C_3$ are $x_1=0$, $x_2=0$, and $x_3=0$, respectively.
Then $\phi_6=x_1^{k_1}x_2^{k_2}x_3^{k_3}$ with $k_1+k_2+k_3=6$ and $\phi_4=x_1^{l_1}x_2^{l_2}x_3^{l_3}$ with $l_1+l_2+l_3=4$.
On the other hand, 
\begin{equation*}
4\phi_4^3+27\phi_6^2= 4x_1^{3l_1}x_2^{3l_2}x_3^{3l_3}+27x_1^{2k_1}x_2^{2k_2}x_3^{2k_3}
\end{equation*}
is a semi-invariant of $\Aut(X)^0_B$. Hence, $3l_i=2k_i$ for $i=1,2,3$ and so $k_i\ge 3$
whenever $l_i\ge 1$. This contradicts Lemma~\ref{lemma:sing0}.

Thus we may assume that~$C$ has a component, say $C_1$, that is not a line.
Then the connected linear algebraic group
$\Aut(X)^0_B$ acts on $C_1$ faithfully.
Therefore, $C_1$ is rational and so $\Aut(X)^0_B\subset \Aut(C_1)\subset \PGL_2(\Bbbk)$.

If $\Aut(X)^0_B$ has no fixed points on $C_1$, then $C_1$ is smooth and does not meet other components of~$C$.
In this case  $C_1$ must be a conic, $C=C_1$, $\Lambda_6=3C_1$, and
$\Lambda_4=2C_1$ (or $\Lambda_4=\PP^2$).
This contradicts Corollary~\ref{cor:sing:comp}.

Therefore, $\Aut(X)^0_B$ has a fixed point on $C_1$. Then 
$\Aut(X)^0_B$ is 
Borel subgroup of $\PGL_2(\Bbbk)$. 
Thus the unipotent radical of $\Aut(X)^0_B$ is isomorphic to $\Ga$
and $\Aut(X)^0_B$ contains a one-dimensional torus $\Gm$.
The same is true for $\Aut(X)^0$ (see \eqref{eq:exact-sequence}).
Hence
$\Aut(X)^0$ is a the semi-direct product 
 $\Ga \rtimes \Gm$ (for suitable action of $\Gm$ on $\Ga$).
We obtain the case~\ref{lemma:group:GaGm}.
\end{proof}

\begin{lemma}
\label{lemma:aut}
Let $X\subset \PP(1^3,2,3)$ be an embedding as a hypersurface of degree $6$.
Then any automorphism $g\in \Aut(X)^0$ is induced by an automorphism of $\PP(1^3,2,3)$.
\end{lemma}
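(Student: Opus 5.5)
The plan is to show that every $g\in\Aut(X)^0$ preserves the graded ring $R(X,A_X)=\bigoplus_{m\ge0}H^0(X,\OOO_X(mA_X))$, and that this ring is the homogeneous coordinate ring of $\PP(1^3,2,3)$ modulo the sextic equation. First, $g$ preserves the class of $A_X=-\frac12K_X$ in $\Pic(X)$, because $K_X$ is $g$-invariant and $\Pic(X)$ is torsion free. So $g^*\OOO_X(A_X)\simeq\OOO_X(A_X)$. For $g$ in the connected group $\Aut(X)^0$ one can do better: the action linearizes. The obstruction to linearizing a connected group action on a line bundle lies in $\Pic$ of the group, up to finite ambiguity. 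I would pass to a finite cover of $\Aut(X)^0$ if necessary. Since the statement only concerns individual elements, it suffices to lift each element to a graded automorphism of $R(X,A_X)$ defined up to scaling. Any isomorphism $g^*\OOO(A_X)\simeq\OOO(A_X)$ does this, and it is unique up to $\Bbbk^*$.

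Second, I would recall how the embedding \eqref{eq:emb} arises. We have $H^0(A_X)$ of dimension $3$, spanned by $x_1,x_2,x_3$. Next, $H^0(2A_X)$ has dimension $7$ and is spanned by the quadratic monomials in the $x_i$ together with one new element $y$. Then $H^0(3A_X)$ has dimension $14$ and is spanned by the products of lower-degree sections together with one new element $z$. The ring $R(X,A_X)$ is generated by $x_1,x_2,x_3,y,z$ with the single relation \eqref{eq:eq} in degree $6$; this is the standard computation from Riemann–Roch and the Kawamata–Viehweg vanishing used in Theorem~\ref{thm:preV1}. A graded automorphism of $R(X,A_X)$ is then determined by the images of the generators:
\begin{itemize}
\item $x_i\mapsto$ a linear form in the $x_j$;
\item $y\mapsto a\,y+q(x)$ with $q$ quadratic;
\item $z\mapsto b\,z+c\,xy+r(x)$.
\end{itemize}
Such a substitution is exactly an automorphism of $\PP(1^3,2,3)$. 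Every element of $\Aut\PP(1^3,2,3)$ has this shape, since the automorphisms of weighted projective space are the graded automorphisms of the polynomial ring modulo $\Gm$. The substitution preserves the ideal generated by the defining sextic, so it induces $g$ on $X=\Proj R(X,A_X)$.

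The main point to check is the lifting step, namely that $g^*A_X\sim A_X$ yields a graded ring automorphism compatible with $g$. This is formal once the isomorphism of line bundles is fixed, because $X=\Proj R(X,A_X)$ (as $A_X$ is ample) and $g$ acts on each $H^0(mA_X)$ through the chosen isomorphism raised to the $m$-th power. The only subtlety is that the relation may change by a scalar under the substitution, which is harmless. Note that connectedness is not really needed; the argument in fact shows that all of $\Aut(X)$ is induced from $\PP(1^3,2,3)$.
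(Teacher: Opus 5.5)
Your proof is correct. Its second half matches the paper: $\mathrm{R}(X,A_X)$ is generated in degrees $1,1,1,2,3$ with a single relation in degree $6$, so a graded automorphism of $\mathrm{R}(X,A_X)$ lifts uniquely to $\Bbbk[x_1,x_2,x_3,y,z]$ and preserves $(\phi)$. The real difference is in how you make $g$ act on $\mathrm{R}(X,A_X)$.

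\textbf{The paper's route.} It linearizes the whole group $\Aut(X)^0$. It uses that $\Pic(X)=\ZZ A_X$ and that $\Pic(\Aut(X)^0)=0$, and then quotes the linearization theorem. The vanishing of $\Pic(\Aut(X)^0)$ is available only because Lemma~\ref{lemma:group} has already shown that $\Aut(X)^0$ is $\Gm$, $\Ga$ or $\Ga\rtimes\Gm$.

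\textbf{Your route.} You argue element by element. You get $g^*A_X\sim A_X$ because $2g^*A_X\sim -K_X$ and $\Pic(X)$ is torsion free. An isomorphism $g^*\OOO_X(A_X)\simeq\OOO_X(A_X)$, unique up to $\Bbbk^*$, then gives a graded automorphism of $\mathrm{R}(X,A_X)$. Its scalar ambiguity ($\lambda^m$ in degree $m$) is exactly the $\Gm$ one divides out by in $\Aut(\PP(1^3,2,3))$.

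\textbf{Comparison.} Your argument is more elementary and does not depend on Lemma~\ref{lemma:group}. It also proves the stronger statement that every element of $\Aut(X)$ is induced. What the paper's route buys is a genuine algebraic action of $\Aut(X)^0$ on the graded polynomial ring. That is what Setup~\ref{setup:GaGm} actually uses: equivariance of the embedding, and semi-invariant coordinates when the group is a torus. From your elementwise lifts you get a homomorphism $\Aut(X)\to\Aut(\PP(1^3,2,3))$. To diagonalize a torus you then need one more easy remark: the preimage of a torus in the group of graded automorphisms of $\Bbbk[x_1,x_2,x_3,y,z]$ is again a torus, being a connected extension of a torus by the central $\Gm$ with trivial unipotent radical.

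\textbf{Two small points to tighten.}
\begin{enumerate}
\item The image of $z$ should read $bz+\ell(x)\,y+r(x)$, with $\ell$ a linear form.
\item The invariance of $(\phi)$ should be justified explicitly. Write $\alpha$ for the surjection $\Bbbk[x_1,x_2,x_3,y,z]\to\mathrm{R}(X,A_X)$, $\tilde g$ for the lift, and $g^\sharp$ for the automorphism of $\mathrm{R}(X,A_X)$. Then $\alpha\circ\tilde g=g^\sharp\circ\alpha$, since both maps agree on generators, so $\tilde g(\ker\alpha)\subset\ker\alpha$. As $(\phi)$ is one-dimensional in degree $6$, this gives $\tilde g(\phi)=c\phi$ with $c\neq 0$. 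Invertibility of $\tilde g$ follows because $\alpha$ is an isomorphism in degrees $\le 3$.
\end{enumerate}
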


\begin{proof}
Note that the Picard group of $X$ is cyclic and generated by the class of $A_X$.
Hence any line bundle on $X$ is $\Aut(X)$-invariant.
Note that in all cases of Lemma~\ref{lemma:group}
we have $\Pic( \Aut(X)^0 )=0$ since in these cases $\Aut(X)^0$ can be realized as an open subset of an 
affine space. Therefore, any
line bundle on $X$ admits a $\Aut(X)^0$-linearization (see e.g. \cite[Theorem~7.2]{Dolgachev:L-invariant}).
This applies to  $\OOO_X(nA_X)$ for any $n$.
Therefore, 
the $\Aut(X)^0$-action on~$X$ naturally lifts to an $\Aut(X)^0$-action on the graded algebra 
\begin{equation*}
\mathrm{R}(X,A_X)= \bigoplus_{n\ge 0} H^0(X, \OOO_X(nA_X)).
\end{equation*}
Since $A_X$ is ample, we have $X=\Proj \mathrm{R}(X,A_X)$.
It is known that $\mathrm{R}(X,A_X)$ is generated by its elements $x_1,x_2,x_3,y,z$
of degrees $\deg x_i=1$, $\deg y=2$, $\deg z=3$, and 
there is a unique quasi-homogeneous relation $\phi(x_1,x_2,x_3,y,z)=0$ of degree $6$
(see e.g. \cite{Shin1989} or \cite[\S~3.4]{P:Fano25e}).
This means that there is an exact sequence
\begin{equation*}
0 \longrightarrow (\phi) \longrightarrow \Bbbk[x_1,x_2,x_3,y,z] 
\overset{\alpha}{\longrightarrow} \mathrm{R}(X,A_X) \longrightarrow 0,
\end{equation*}
where $\Bbbk[x_1,x_2,x_3,y,z]$ is regarded as a graded ring with $\deg x_i=1$, $\deg y=2$, $\deg z=3$. 
The surjection $\alpha$ induces an embedding $X \hookrightarrow \PP(1^3,2,3)$,
moreover, any such an embedding is obtained in such a  way.
Since the degree of $\phi$ is strictly less that 
the degrees of variables, the $\Aut(X)^0$-action lifts from 
$\mathrm{R}(X,A_X)$ to $\Bbbk[x_1,x_2,x_3,y,z]$ so that
the ideal $(\phi)$ is invariant. Therefore, the $\Aut(X)^0$-action on $\Bbbk[x_1,x_2,x_3,y,z]$
induces a $\Aut(X)^0$-action on $\Proj\Bbbk[x_1,x_2,x_3,y,z]= \PP(1^3,2,3)$ so that
the embedding $X \hookrightarrow \PP(1^3,2,3)$ considered above is $\Aut(X)^0$-equivariant.
\end{proof}

\begin{setup}
\label{setup:GaGm}
Let~$X$ be a double Veronese cone such that the 
identity component $\Aut(X)^0$ of $\Aut(X)$ is not trivial.
By Lemma~\ref{lemma:group} the group $\Aut(X)^0$ is isomorphic either $\Gm$, $\Ga$ or $\Ga \rtimes \Gm$.
Recall that there is an embedding $X\subset \PP(1^3,2,3)$
whose image is a hypersurface of degree~$6$ 
and this embedding
is $\Aut(X)^0$-equivariant (see Lemma~\ref{lemma:aut}).
Moreover, by changing coordinates in $\PP(1^3,2,3)$ we may assume that 
$X$ is given by the equation \eqref{eq:eq}.
If furthermore the group $\Aut(X)^0$ is an algebraic torus, then it is reductive and we may assume that 
the coordinates $x_1,x_2,x_3, y,z$ are semi-invariants.
\end{setup}

Now, we investigate additive group actions on double Veronese cones.

\begin{notation}
For $\lambda\in \PP^1=\Bbbk\cup \{\infty\}$, consider the quadratic form
\begin{equation}
\label{eq:psi-lambda}
\psi_\lambda:= 
\begin{cases}
x_2^2-2 x_1x_3+\lambda x_3^2&\text{if $\lambda\in \Bbbk$,} \\
x_3^2& \text{if $\lambda=\infty$.}
\end{cases}
\end{equation} 
It defines a pencil of conics
$\{ D_\lambda\}_{\lambda\in \PP^1}$,
where
\begin{equation}
\label{eq:Dlambda}
D_\lambda=\{\psi_\lambda=0\}.
\end{equation}
Clearly, $D_{\infty}$ is the double line $L:=\{x_3=0\}$ and
for $\lambda\neq \infty$ the conic $D_\lambda$ is smooth and the line $L$
is tangent to $D_\lambda$ at $Q:=(1,0,0)$ (see Fig.~\ref{fig:Pencil}).
\begin{figure}[htbp]
\centering
\begin{tikzpicture}
\fill (2.1,-0.) circle (2pt) node[right, xshift=5] {$Q$};
\draw[black, ultra thick] (2.1,2) -- (2.1,-2) node[right, xshift=5] {$L$};
\draw[thick,-latex] (0, 0) ellipse [x radius =5em, y radius = 2em] ;
\draw[thick,-latex] (-0.41,-0) ellipse [x radius =6em, y radius = 3em]; 
\draw[thick,-latex] (-0.8,-0) ellipse [x radius =7em, y radius = 4em] node[above, xshift=-5] {$D_\lambda$};
\end{tikzpicture} 
\caption{$\Ga$-invariant pencil of conics.}
\label{fig:Pencil}
\end{figure}
If we define a $\Ga$-action on $\PP^2$ via 
\begin{equation}
\label{eq:Ga-action:B}
\textstyle
(x_1,x_2,x_3) \longmapsto \left( x_1+ x_2 t +\frac 12 x_3 t^2,\ x_2+ x_3 t,\ x_3\right),
\end{equation}
then the pencil $\{ D_\lambda\}$ is $\Ga$-invariant.
More precisely, the conics $D_\lambda$ are closures of $\Ga$-orbits.
\end{notation}

\begin{proposition}
\label{prop:Ga}
Suppose that a double Veronese cone~$X$ admits an effective action of the additive group $\Ga$. 
Then in a suitable coordinate system in $\PP(1^3,2,3)$ 
the variety $X\subset \PP(1^3,2,3)$ can be given by the equation
\begin{equation}
\label{eq:eq:Ga:eq}
z^2+y^3+\varepsilon y \psi_{\lambda'_1} \psi_{\lambda'_2} +\psi_{\lambda_1} \psi_{\lambda_2} \psi_{\lambda_3}=0,
\end{equation}
where $\varepsilon\in \Bbbk$ is a constant and $\psi_{\lambda_i}$, $\psi_{\lambda'_j}$ are given by \eqref{eq:psi-lambda}, and 
$\Ga$ acts on~$X$ via
\begin{equation}
\label{eq:Ga-action:X}
\textstyle
(x_1,x_2,x_3,y,z) \longmapsto \left( x_1+ x_2 t +\frac 12 x_3 t^2,\ x_2+ x_3 t,\ x_3,\,y,\,z\right).
\end{equation}
where $\varepsilon\in \Bbbk$ is a constant and the quadratic forms $\psi_{\lambda_i}$, $\psi_{\lambda'_j}$ are given by \eqref{eq:psi-lambda}.
The singular locus of~$X$ consists of a single point $P:=(1,0,0,0,0)$, which is of type \type{cD}{} or \type{cE}{}.
\end{proposition}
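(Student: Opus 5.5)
We would start from Setup~\ref{setup:GaGm}: by Lemma~\ref{lemma:aut} the $\Ga$-action is induced by an action on $\PP(1^3,2,3)$. We therefore work with the graded algebra $\Bbbk[x_1,x_2,x_3,y,z]$ and keep $X$ in the normal form \eqref{eq:eq}.

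\textbf{Step 1: the action on the coordinates.} The action preserves the graded pieces $V_1=\langle x_1,x_2,x_3\rangle$, $V_2=V_1^2\oplus\Bbbk y$ and $V_3$. The kernel $\Aut(X)_K$ is finite, hence $\Ga$ acts faithfully on $B=\PP^2$ through a unipotent subgroup of $\SL_3$. Since $\Ga$ has no nontrivial characters, $y$ can be replaced by a $\Ga$-invariant element of $V_2$ modulo $V_1^2$ that realizes the Tschirnhaus normalization. Concretely, the completing-the-square and completing-the-cube coordinate changes that produce \eqref{eq:eq} are canonical, because they are determined by the equation. So $z$ and $y$ may be taken invariant, and $\phi_4$, $\phi_6$ are $\Ga$-invariant forms. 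It remains to determine the unipotent element acting on $V_1$.

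\textbf{Step 2: the Jordan type on $V_1$.} The action on $V_1$ is either a single Jordan block of size $3$, which after a coordinate change is \eqref{eq:Ga-action:B}, or of type $(2,1)$, say $x_1\mapsto x_1+tx_2$ with $x_2$ and $x_3$ fixed. In the second case the orbits in $B$ are the lines through $(1,0,0)$, and the ring of invariants is $\Bbbk[x_2,x_3]$. Then $\phi_4$ and $\phi_6$ depend only on $x_2,x_3$, and $\Lambda_4$, $\Lambda_6$ are unions of lines through $Q=(1,0,0)$ with $\mult_Q\Lambda_4=4$ and $\mult_Q\Lambda_6=6$, contradicting Corollary~\ref{cor:w-blowup}. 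Hence the action is \eqref{eq:Ga-action:X}.

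\textbf{Step 3: the equation.} The invariants of \eqref{eq:Ga-action:B} in $\Bbbk[x_1,x_2,x_3]$ form the ring $\Bbbk[x_3,\psi_0]$ with $\psi_0=x_2^2-2x_1x_3$; this is the classical covariant ring of binary forms. Any invariant form of degree $4$ or $6$ is therefore a binary form in $x_3^2$ and $\psi_0$. Factoring it over $\Bbbk$ writes it as a product of members $\psi_\lambda=\psi_0+\lambda x_3^2$ of the pencil \eqref{eq:psi-lambda}, where $\psi_\infty=x_3^2$ is allowed. This gives \eqref{eq:eq:Ga:eq}, with $\varepsilon=0$ covering $\phi_4=0$.

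\textbf{Step 4: the singularities.} Every pair of the conics $\psi_\lambda$ meets only at $Q=(1,0,0)$. Take a singular point $P'\ne P$. Its image $\tau(P')$ lies in $\Sigma$, which consists of finitely many points; being invariant under the connected group $\Ga$, each of them is fixed. The fixed locus of \eqref{eq:Ga-action:B} on $B$ is $\{Q\}$, so $\tau(P')=Q$. Each fiber of $\tau$ contains at most one singular point, so $\Sing(X)=\{P\}$ with $P=(1,0,0,0,0)$, which lies over $Q$. That $X$ is in fact singular at $P$ can be read off the chart $x_1=1$: every $\psi_\lambda$ lies in the ideal $(x_2,x_3)^2+(x_3)$, so $\phi_4$ and $\phi_6$ vanish at $Q$ and have no linear terms there.

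To see that $P$ is not \type{cA}{}, apply Lemma~\ref{lemma:singularities:6}(ii). The point $Q$ lies on $\Lambda_4\cap\Lambda_6$, and we check two conditions. First, $\Lambda_6$ is a union of three conics all tangent to $L$ at $Q$, so $\mult_Q\Lambda_6\ge3$ (or $\phi_6$ has a factor $x_3^2$). Second, $\Lambda_4$ has multiplicity $\ge2$ at $Q$, or is empty, or is the whole plane. The degenerate cases require extra care, for instance when some $\lambda_i=\infty$, or when coincident $\lambda_i$ are excluded by Lemma~\ref{lemma:sing0}. I expect this last bookkeeping to be the main obstacle, together with justifying the canonical invariant choice of $y$ and $z$ in Step 1.
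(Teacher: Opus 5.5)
Your proposal is correct and follows essentially the same route as the paper: the two unipotent Jordan types on $B$, exclusion of type $(2,1)$ by Corollary~\ref{cor:w-blowup}, $\phi_4,\phi_6$ written as products of the $\psi_\lambda$, and the singular point located over the unique $\Ga$-fixed point $Q$; your Step~1 and the invariant ring $\Bbbk[x_3,\psi_0]$ make explicit what the paper only asserts. The bookkeeping you flag as the main obstacle is actually empty: every $\psi_\lambda$, including $\psi_\infty=x_3^2$, vanishes at $Q$, so $\phi_4\in\mathfrak{m}_Q^2$ and $\phi_6\in\mathfrak{m}_Q^3$ in all cases, hence $a=b_{i,j}=0$ in the proof of Lemma~\ref{lemma:singularities:6}, so $P$ is not of type \type{cA}{} and is therefore of type \type{cD}{} or \type{cE}{}.
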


\begin{proof}
Recall that the induced $\Ga$-action  on $B=\PP^2$ is faithful.
Any one-dimensional additive subgroup of $\PGL_3(\Bbbk)$ is
the image of the homomorphism
\begin{equation*}
\Ga \longrightarrow \PGL_3(\Bbbk),\qquad t \longmapsto \exp(tN),
\end{equation*}
where 
$N$ is a nilpotent matrix. 
Up to conjugacy there are two cases
\begin{equation*}
N=\begin{pmatrix}
0& 0 & 0
\\
0& 0 & 1
\\ 
0& 0 & 0
\end{pmatrix}
\quad\text{or} \quad 
N=\begin{pmatrix}
0& 1 & 0
\\
0& 0 & 1
\\ 
0& 0 & 0
\end{pmatrix}
\end{equation*}
In the former case the action has the form
\begin{equation*}
(x_1,x_2,x_3) \longmapsto (x_1,\ x_2+ x_3 t,\ x_3).
\end{equation*}
Then any $\Ga$-invariant curve is a line passing through $Q:=(0,1,0)$. 
In particular, all the components of $\Lambda_4$ and $\Lambda_6$ 
are lines passing through $Q$. 
This contradicts Corollary~\ref{cor:w-blowup}.

In the latter case $\Ga$ acts on $\PP^2$ as in \eqref{eq:Ga-action:B}.
Then any $\Ga$-invariant curve is a member of the pencil $\{ D_\lambda\}$ (see~\eqref{eq:Dlambda}).
Therefore, the equations of $\Lambda_4$ and $\Lambda_6$ must be products 
of quadratic forms $\psi_\lambda$ (see~\eqref{eq:psi-lambda}) and so they are given by
\begin{equation*}
\begin{array}{lll}
\Lambda_4&=& \{\psi_{\lambda'_1} \psi_{\lambda'_2}=0\} \quad \text{or}\quad \Lambda_4=\PP^2,
\\
\Lambda_6&=& \{\psi_{\lambda_1} \psi_{\lambda_2} \psi_{\lambda_3}=0\}.
\end{array} 
\end{equation*} 
Then in some coordinate system in $\PP(1^3,2,3)$ the equation of~$X$ can be written in the form~\eqref{eq:eq:Ga:eq} and the group $\Ga$ acts on~$X$ via \eqref{eq:Ga-action:X}.
On the other hand, this $\Ga$-action is unique by Lemma~\ref{lemma:group}.
Hence~\eqref{eq:Ga-action:X} is the only choice.

Since $Q=(1,0,0)$ is the only $\Ga$-fixed point,
any singularity of~$X$ is contained in the fiber $\tau^{-1}(Q)$.
Since $\tau^{-1}(Q)$ is an irreducible curve of genus $1$, we have $\Sing(X)=\Sing(\tau^{-1}(Q))$ is a single point.
Finally, this point is not of type \type{cA}{} by Lemma~\ref{lemma:singularities:6}.
\end{proof} 

\begin{example}
In the notation of Proposition~\xref{prop:Ga} assume that $\varepsilon=0$, the constants $\lambda_1,\lambda_2,\lambda_3$ are distinct, and 
$\lambda_i\neq \infty$ for $i=1,2,3$, then the singularity $P\in X$ is terminal of type \type{cD}{4}.
\end{example}

\begin{corollary}
Let~$X$ be a double Veronese cone. Then $\dim \Aut(X)\le 1$.
Therefore, in the case where $\Aut(X)$ is infinite, the identity component $\Aut(X)^0\subset \Aut(X)$ is isomorphic either to $\Ga$ or $\Gm$.
\end{corollary}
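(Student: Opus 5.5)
By Lemma~\ref{lemma:group}, if $\Aut(X)$ is infinite then $\Aut(X)^0$ is $\Gm$, $\Ga$, or $\Ga\rtimes\Gm$. Hence it suffices to rule out the semi-direct product, i.e.\ to show that $\dim\Aut(X)^0\le 1$. The plan is to argue by contradiction: assume $G:=\Aut(X)^0\simeq \Ga\rtimes\Gm$. Then $G$ contains a subgroup $\Ga$. By Lemma~\ref{lemma:aut} and Proposition~\ref{prop:Ga}, we may choose coordinates in which $X$ is given by \eqref{eq:eq:Ga:eq} and the subgroup $\Ga$ acts by \eqref{eq:Ga-action:X}. In particular $\Aut(X)^0_B$ lies in the normalizer in $\PGL_3(\Bbbk)$ of this unipotent one-parameter subgroup $U=\{\exp(tN)\}$, where $N$ is the regular nilpotent matrix.

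Next I will describe this normalizer explicitly. A torus normalizing $U$ can be taken diagonal, $\operatorname{diag}(s^2,s,1)$ up to scalars, because it must scale $N$. Together with $U$ this gives a Borel-type group. The diagonal torus acts on the pencil $\{D_\lambda\}$ of \eqref{eq:Dlambda} by rescaling: $\psi_\lambda(s^2x_1,sx_2,x_3)=s^2\psi_{\lambda/s^2}$, i.e.\ $\lambda\mapsto \lambda s^{-2}$. Its only fixed members are $D_0$ and $D_\infty=2L$.

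The curves $\Lambda_4$ (if $\Lambda_4\neq\PP^2$) and $\Lambda_6$ are $\Aut(X)_B$-invariant, so under this torus the parameters $\lambda_i,\lambda'_j$ must form invariant finite multisets in $\PP^1$, hence lie in $\{0,\infty\}$. Therefore $\phi_6=\psi_0^{a}x_3^{2b}$ with $a+b=3$, and $\phi_4=\psi_0^{c}x_3^{2e}$ with $c+e=2$ (or $\phi_4=0$).

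Finally I will contradict the isolated/terminal singularity results. If $a\ge 2$, then $\Lambda_6$ contains the conic $D_0$ with multiplicity $\ge 2$. Lemma~\ref{lemma:sing0} rules this out unless $\psi_0\nmid\phi_4$, and Corollary~\ref{cor:sing:comp} also applies. If $b\ge 2$, the line $L$ has multiplicity $\ge 4$ in $\Lambda_6$, and since $\phi_4$ is then divisible by $x_3^{2}$ or is coprime to $x_3$, Lemma~\ref{lemma:sing0} or Corollary~\ref{cor:w-blowup} applies. The main obstacle is the exhaustive case check over $(a,b,c,e)$, for instance $\phi_6=\psi_0^3$ with $\phi_4=x_3^4$. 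For these I plan to compute multiplicities at $Q=(1,0,0)$, where all $D_\lambda$ are tangent to each other. There $\mult_Q\psi_0=1$, but the intersection is highly non-transversal. If the multiplicity criteria do not suffice, I will do a direct weighted blowup at the singular point over $Q$, with weights adapted to the tangency (e.g.\ $\bar x_2$ weight $1$, $\bar x_1$ weight $2$), in the spirit of Corollary~\ref{cor:w-blowup}, and show that it yields discrepancy $\le 0$, contradicting terminality. Once every case is excluded, $\dim\Aut(X)\le1$, and the second sentence of the statement follows immediately from Lemma~\ref{lemma:group}.
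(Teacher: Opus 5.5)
\textbf{There is a genuine gap in the last step.} Your reduction is fine: the torus must scale $N$, so its only invariant members of the pencil are $D_\infty$ and one more, which you may take to be $D_0$. Invariance of $\Lambda_4$ and $\Lambda_6$ then gives $\phi_6=\psi_0^a x_3^{2b}$ and $\phi_4=\psi_0^c x_3^{2e}$ (or $\phi_4=0$).

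Lemma~\ref{lemma:sing0} removes $\phi_4=0$, removes $\phi_4=\psi_0x_3^2$, and removes every case with $a\ge 2,\ c\ge 1$ or with $b\ge 1,\ e\ge 1$. Exactly three configurations survive:
\[ (\phi_4,\phi_6)=(x_3^4,\ \psi_0^3),\qquad (\psi_0^2,\ x_3^6),\qquad (\psi_0^2,\ \psi_0x_3^4). \]

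No multiplicity criterion or weighted blowup can exclude these, because each one is a genuine double Veronese cone. Each is a member of the $\Ga$-family \eqref{eq:eq:Ga:eq}. From \eqref{eq:sing:n1} one checks that its only singular point lies over $Q=(1,0,0)$ and is an isolated \type{cD}{4} point, hence terminal.

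For example, take $(x_3^4,\psi_0^3)$, the very case you single out. In the chart $x_1=1$ the equation is $\bar z^2+\bar y^3+\bar y\bar x_3^4+(\bar x_2^2-2\bar x_3)^3=0$. Its quadratic part is $\bar z^2$ and its cubic part is $\bar y^3-8\bar x_3^3$. The weighted blowup you propose, with weights $(1,2,2,3)$ on $(\bar x_2,\bar x_3,\bar y,\bar z)$, has discrepancy $8-6-1=1>0$.

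Your claim that Corollary~\ref{cor:w-blowup} applies when $b\ge 2$ and $x_3\nmid\phi_4$ is also false. For $\phi_4=\psi_0^2$ one has $\mult_Q\Lambda_4=2<4$.

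\textbf{The missing idea.} The torus must act on the equation of $X$, not merely preserve $\Lambda_4$ and $\Lambda_6$ separately. The paper encodes this through the $\Aut(X)_B$-invariance of the discriminant $\Di$: its components must also lie in $D_0\cup D_\infty$. But $4\phi_4^3+27\phi_6^2$ splits off further members $D_\mu$ with $\mu\neq 0,\infty$ unless the two summands are proportional. That forces $3c=2a$ and $3e=2b$. Equivalently, in semi-invariant coordinates it is the weight condition \eqref{eq:wt:4-6}, since $\wt(\psi_0)=2$ and $\wt(x_3)=0$.

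This leaves only $(\psi_0^2,\psi_0^3)$ and $(x_3^4,x_3^6)$, and both are killed by Lemma~\ref{lemma:sing0}. In the three surviving cases above, $\Di$ contains such extra conics $D_\mu$. So what excludes them is that the torus does not act on those varieties, not that their singularities are bad.
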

\begin{proof}
Assume that $\dim \Aut(X)\ge 2$. Then $\Aut(X)^0\simeq \Ga \rtimes \Gm$
(see Lemma~\ref{lemma:group}).
Let $G_{\mathrm{u}}\subset \Aut(X)^0$ be the unipotent radical.
Thus $G_{\mathrm{u}}\simeq \Ga$ and $\Aut(X)^0$ contains a one-dimensional torus $G_{\mathrm{t}}\simeq \Gm$.
In this case $\Aut(X)^0$ preserves the pencil $\{D_\lambda\}$ (see Fig.~\ref {fig:Pencil}). 
If the induced $G_{\mathrm{t}}$-action  on this pencil is trivial, then $G_{\mathrm{t}}$ has two distinct fixed points $Q$ and $Q_\lambda$
on each member $D_\lambda$, $\lambda\neq \infty$. 
The tangent line $T_{Q_\lambda, D_\lambda}$ to $D_\lambda$ at $Q_\lambda$ is $G_{\mathrm{t}}$-invariant and meets other (invariant)
members of the pencil. This implies that the $G_{\mathrm{t}}$-action on $T_{Q_\lambda, D_\lambda}$ (and also on $B$) is trivial, a contradiction.

Thus $G_{\mathrm{t}}$ has exactly two invariant members in the pencil $\{D_\lambda\}$.
One of them must be $D_\infty$ and another one we denote by $D_{\lambda_0}$.
Then $\Supp(\Lambda_4),\,\Supp(\Lambda_6)\subset D_\infty\cup D_{\lambda_0}$. 
By Corollary~\ref{cor:sing:comp} we have $\Lambda_4\neq \varnothing$.
Thus, 
\begin{equation*}
\phi_4=c_1\psi_{\infty}^{k_1}\psi_{\lambda_0}^{k_0},
\qquad 
\phi_6=c_2\psi_{\infty}^{l_1}\psi_{\lambda_0}^{l_0},
\qquad 
4\phi_4^3+27\phi_6^2=4c_1^3\psi_{\infty}^{3k_1}\psi_{\lambda_0}^{3k_0}+27c_2^2\psi_{\infty}^{2l_1}\psi_{\lambda_0}^{2l_0},
\end{equation*}
where $k_1+k_0=2$ and $l_1+l_0=3$.
Since $\Supp(\Di)\subset D_\infty\cup D_{\lambda_0}$, we obtain $3k_1=2l_1$ and $3k_0=2l_0$. 
Then either $k_1\ge 1$ and $l_1\ge 3$ or $k_0\ge 1$ and $l_0\ge 3$.
This contradicts Lemma~\ref{lemma:sing0}.
\end{proof}

Finally, we investigate actions of the one‑dimensional torus $\Gm$ on double Veronese cones. 
\begin{notation}
We say that a semi‑invariant polynomial $\phi$ of the torus $\Gm$ \textit{has weight $w$} if $\Gm$ acts on $\phi$ via $\phi \longmapsto t^w\phi$. In this situation we write $\wt(\phi)=w$.
\end{notation}

\begin{proposition}
\label{prop:torus}
Let~$X$ be a double Veronese cone admitting an effective $\Gm$-action. Then the equation of~$X$ and the action are described in Table~\xref{table:Gm}. The singular locus of $X$ consists of at most
three points.

Conversely, a general member of each family
defined by $\phi_4$ and $\phi_6$ as in Table~\xref{table:Gm} is a double Veronese cone admitting an effective $\Gm$-action.

\begin{table}[h]
\centering
\begin{tabularx}{0.9999\textwidth}{|l|l|c|X|c|l|}
\hline
&\multicolumn{1}{|c|}{$\wt$}
&\multicolumn{2}{|c|}{$\phi_6$}
&\multicolumn{2}{|c|}{$\phi_4$}
\\\hhline{~~----}
&\multicolumn{1}{|c|}{$(x_1,x_2,x_3,y,z)$}
&\multicolumn{1}{|c|}{$\wt$}
&\multicolumn{1}{|c|}{\rm equation}
&\multicolumn{1}{|c|}{$\wt$}
&\multicolumn{1}{|c|}{\rm equation}
\\\hline
\nr 
\label{0,0,1-}
& $(0,0,6, 2,3)$ & $6$ & $x_3\psi(x_1,x_2)$, $\deg \psi=5$ & - & $0$ 
\\
\nr
\label{0,1,2-5}
& $(0,6,12,10,15)$ & $30$ & $(a_1x_1^2x_3^2 +a_2 x_1x_2^2x_3 + a_3 x_2^4)x_1x_2 $ & - & $0$
\\
\nr
\label{0,1,2-6} 
& $(0,1,2,2,3)$ & $6$ &
$ a_1x_1^3x_3^3+ a_2x_1^2 x_2^2x_3^2+ a_3x_1x_2^4x_3+a_{4}x_2^6$ 
& $4$ & $b_1x_1^2x_3^2+b_2 x_1x_2^2x_3+b_3x_2^4$
\\
\nr
\label{0,1,3-6}
& $(0,1,3,2,3)$ & $6$ & $a_1x_1^4x_3^2+ a_2x_1^2x_2^3x_3+ a_3x_2^6$ & $4$ & $(b_1x_1^2 x_3+b_2x_2^3)x_2$
\\
\nr
\label{0,1,3-7} 
& $(0,6,18,14,21)$ & $42$ & $(a_1x_1^2x_3+ a_2x_2^3)x_1x_2x_3$ & - & $0$
\\
\nr
\label{0,1,4-5} 
& $(0,6,24,10,15)$ & $30$ & $(a_1x_1^3 x_3 + a_2x_2^4)x_1x_2 $ & - & $0$ 
\\
\nr
\label{0,1,4-6} 
& $(0,1,4,2,3)$ & $6$ & $(a_1x_1^3x_3+ a_2x_2^4)x_2^2$ & $4$ & $b_1x_1^3x_3+b_2x_2^4$ 
\\
\nr
\label{0,1,4-8} 
& $(0,3,12,8,12)$ & $24$ & $(a_1x_1^3x_3 + a_2 x_2^4)x_1x_3 $ & - & $0$
\\
\nr
\label{0,1,4-9} 
& $(0,2,8,6,9)$ & $18$ & $(a_1x_1^3 x_3^2+ a_2x_2^4x_3) x_2$ & $12$ & $bx_1x_2^2x_3$ 
\\
\nr
\label{0,1,5-5} 
& $(0,6,30,10,15)$ & $30$ & $(a_1x_1^4x_3 + a_2x_2^5)x_1 $ & - & $0$
\\
\nr
\label{0,1,5-6}
& $(0,1,5,2,3)$ & $6$ & $(a_1x_1^4x_3+ a_2x_2^5)x_2$ & $4$ & $bx_2^4$
\\
\nr
\label{0,1,5-10} 
& $(0,3,15,10,15)$ & $30$ & $(a_1x_1^4 x_3 + a_2x_2^5)x_3$ & - & $0$
\\
\nr
\label{0,2,5-10} 
& $(0,6,15,10,15)$ & $30$ & $(a_1x_1^3x_3^2 + a_2x_2^5)x_1$ & - & $0$
\\
\nr
\label{0,2,5-12} 
& $(0,2,5,4,6)$ & $12$ & $(a_1x_1^3x_3^2+ a_2x_2^5)x_2$ & $8$ & $bx_2^4$
\\
\nr
\label{0,2,5-15} 
& $(0,4,10,10,15)$ & $30$ & $(a_1x_1^3x_3^2+ a_2x_2^5)x_3 $ & $20$ & $bx_1^2x_3^2$ 
\\
\nr
\label{0,1,6-6} 
& $(0,1,6,2,3)$ & $6$ & $a_1x_1^5x_3+ a_2x_2^6$ & $4$ & $bx_2^4$
\\
\nr
\label{0,1,8-12} 
& $(0,1,8,4,6)$ & $12$ & $ax_1x_2^4x_3 $ & $8$ & $bx_1^3x_3 $
\\
\nr
\label{0,1,10-15} 
& $(0,2,20,10,15)$ & $30$ & $ax_2^5x_3 $ & $20$ & $bx_1^3x_3 $
\\
\nr
\label{0,3,10-15}
& $(0,6,20,10,15)$ & $30$ & $ax_1x_2^5$ & $20$ & $bx_1^3x_3 $
\\\hline
\end{tabularx}
\caption{Double Veronese cones admitting effective $\Gm$-actions.}
\label{table:Gm}
\end{table}
\end{proposition}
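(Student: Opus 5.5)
By Setup~\ref{setup:GaGm} and Lemma~\ref{lemma:aut} we may assume that $X$ is given by \eqref{eq:eq}, that $\Gm$ acts through $\PP(1^3,2,3)$, and that $x_1,x_2,x_3,y,z$ are semi-invariants. Since the equation must be semi-invariant, its monomials $z^2$ and $y^3$ force $\wt(z^2)=\wt(y^3)$. Write $\wt(y)=2w$ and $\wt(z)=3w$ after normalization; then $\phi_6$ is semi-invariant of weight $6w$, and $\phi_4$ is either $0$ or semi-invariant of weight $4w$. The weights of $x_1,x_2,x_3$ are defined only up to a common shift. We normalize them to $(0,a,b)$ with $0\le a\le b$ and $\gcd(a,b,\ldots)$ chosen so that the action is effective. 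The action on $B$ is nontrivial, since $\Aut(X)_K$ is finite, so $b>0$.

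The plan is then a finite combinatorial classification. A semi-invariant $\phi_6$ of weight $6w$ is a sum of monomials $x_1^{i}x_2^{j}x_3^{k}$ with $i+j+k=6$ and $ja+kb=6w$. The constraints available to cut down the possibilities are:
\begin{enumerate}
\item $\phi_6\neq0$, and by Lemma~\ref{lemma:sing0} no $\psi$ divides both $\phi_4$ and $\psi^2\mid\phi_6$. In particular, if $\phi_4=0$, then $\phi_6$ is not divisible by a square.
\item By Corollary~\ref{cor:w-blowup}, at each of the coordinate points (the fixed points of $\Gm$ on $B$) we cannot have both $\mult\Lambda_4\ge4$ and $\mult\Lambda_6\ge6$. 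For instance, at $(1,0,0)$ some monomial of $\phi_6$, or of $\phi_4$ with the correct degree, must involve $x_1$ to a high power.
\item The singularities are isolated, so $\Lambda_6$ cannot have a multiple component shared with $\Lambda_4$ (Corollary~\ref{cor:sing:comp}).
\end{enumerate}

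Condition (ii) at the point $(1,0,0)$ forces $\phi_6$ to contain a monomial $x_1^{5}x_j$ or $x_1^{6}$, or $\phi_4$ to contain $x_1^3x_j$ or $x_1^4$. This pins down $6w$ (respectively $4w$) as one of $0$, $a$, or $b$. Similar conditions at $(0,0,1)$ and $(0,1,0)$ give further linear relations among $a$, $b$, $w$. Enumerating the resulting small cases should produce exactly the rows of Table~\ref{table:Gm}, with each row's $\phi_4,\phi_6$ being the general semi-invariant of the indicated weight. For example, $6w=b$ with $a=0$ gives row \ref{0,0,1-}. Case \ref{0,0,1-} arises when $a=0$, which forces $\phi_4=0$, since otherwise $\phi_4$ would vanish to order $4$ at the fixed point line.

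The main obstacle is this bookkeeping: organizing the cases by which monomials survive at the three coordinate points so that nothing is missed or double counted. I would organize it by the pair $(6w\text{ relation at }(1,0,0),\ \text{relation at }(0,0,1))$ and discard degenerate subcases using (i)–(iii).

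For the bound on singularities, note that $\Gm$ acts on $B$ with finitely many fixed points, at most three in the generic-weight case. Every singular point maps into $\Sigma$, which is $\Gm$-invariant and finite, so it consists of fixed points. In the case $a=0$ the fixed locus contains the line $x_3=0$ and the point $(0,0,1)$. There we would check directly from Lemma~\ref{lemma:singularities} that $\Sing(\Di)$ meets the fixed line in few points. Since each fiber of $\tau$ has at most one singular point, $|\Sing X|\le3$.

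For the converse, in each row we verify for general coefficients that $X$ avoids the orbifold points and that the singular points over the (at most three) fixed points are isolated with nonzero discriminant. This gives terminality via the $\mathrm{cA}/\mathrm{cD}/\mathrm{cE}$ criterion, and effectiveness is immediate from the weights.
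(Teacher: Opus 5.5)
Your set-up agrees with the paper: semi-invariant coordinates, $2\wt(z)=3\wt(y)$, weights $(0,a,b)$ on $B$, and pruning by Lemma~\ref{lemma:sing0}. However, the step that is supposed to make the enumeration finite is false.

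\textbf{The finiteness step.} Corollary~\ref{cor:w-blowup} at $(1,0,0)$ only says that $\mult_{(1,0,0)}\Lambda_6<6$ or $\mult_{(1,0,0)}\Lambda_4<4$. That is, $\phi_6$ or $\phi_4$ contains \emph{some} monomial of positive $x_1$-degree. It does not force $x_1^6$, $x_1^5x_j$, $x_1^4$ or $x_1^3x_j$. Row~\ref{0,1,2-6} already violates your version: there $\mult_{(1,0,0)}\Lambda_6=3$, $\mult_{(1,0,0)}\Lambda_4=2$, and $6w=6=3b\notin\{0,a,b\}$ (similarly $4w=2b$). So a case list organized by ``which of $0,a,b$ equals $6w$'' misses rows. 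The correct form of the condition puts no restriction on $(a,b)$ at all: a single monomial is semi-invariant for every choice of weights and merely determines $w$.

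The missing idea is that the bound comes from the weight equations themselves.
\begin{enumerate}
\item First use $t\mapsto t^{-1}$ and a shift to reach weights $(0,m,n)$ with $0<m\le n/2$.
\item Two distinct monomials of $\phi_6$ of equal weight give $m^\sharp(k_{2,2}-k_{2,1})=n^\sharp(k_{3,1}-k_{3,2})$, hence $n^\sharp\le 6$ and $m^\sharp\le 2$.
\item If $\phi_6$ is a single monomial, then $\Lambda_6$ has a multiple component on the coordinate triangle, so $\phi_4\neq 0$. Two monomials in $\phi_4$ again bound $n^\sharp$. A single monomial in $\phi_4$ gives, via $2w_6=3w_4$, that $n^\sharp$ divides $2k_2-3l_2$, so $n^\sharp\le 12$.
\end{enumerate}
Only then do Lemma~\ref{lemma:sing0} and the split $w_6\equiv 0$ versus $w_6\not\equiv 0\pmod 3$ cut the list down to the table. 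The latter case forces $\phi_4=0$ and $\phi_6$ square-free.

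\textbf{The fixed-line case.} Your justification here is also off. When $\phi_4\neq0$, you need $3k=2l$ together with Lemma~\ref{lemma:sing0} to get $k=l=0$, and then Corollary~\ref{cor:w-blowup} at $(0,0,1)$. When $\phi_4=0$, Lemma~\ref{lemma:sing0} gives $l\le 1$ and Corollary~\ref{cor:w-blowup} excludes $l=0$. Note that in your normalization $a=b$ is a second fixed-line case.

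\textbf{The bound on the singular locus.} Your argument for $|\Sing(X)|\le 3$ is correct when $\Gm$ has three isolated fixed points on $B$: then the finite invariant set $\Sigma$ is fixed pointwise. This is also all the paper implicitly relies on; its proof does not discuss the bound.

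The check you propose for $a=0$ cannot succeed. In row~\ref{0,0,1-} one has $\Lambda_4=\PP^2$ and $\Di=2\Lambda_6$, so $\Sing(\Di)$ gives nothing, and \eqref{eq:singularities:3} yields $\Sigma=\Sing(\Lambda_6)$. Since $\psi$ must be square-free by Lemma~\ref{lemma:sing0}, this set is $(0,0,1)$ together with the five points $\{x_3=\psi=0\}$. Directly: in the chart $x_1=1$ near a root of $\psi(1,x_2)$, the equation reads $z^2+y^3+x_3u=0$ with $u$ a local coordinate, which is a \type{cA}{2} point.

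Hence every $X$ in row~\ref{0,0,1-}, for example \eqref{eq:E8}, has exactly six singular points: one \type{cE}{8} point and five \type{cA}{2} points. The bound ``at most three'' can only be proved for the other rows.

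\textbf{The converse.} Isolatedness does not give terminality. At singular points over $\Lambda_6$ you must also check that $\mult_Q\Lambda_4<4$ or $\mult_Q\Lambda_6<6$. Otherwise the $(1,1,2,3)$ weighted blowup has discrepancy $0$.
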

\begin{example}
The double Veronese cone given by \eqref{eq:E8} is as in \ref{0,0,1-}
of Table~\ref{table:Gm}.
It has a unique singular point that is of type \type{cE}{8}.
Recall that it is $\QQ$-factorial by Example~\ref{ex:Q-fact}.
\end{example}

\begin{proof}
By the arguments in~\ref{setup:GaGm} we may assume that our double Veronese cone
$X$ is given by the equation \eqref{eq:eq} so that 
the coordinates $x_1,x_2,x_3, y,z$ are semi-invariants.
Then the polynomials $\phi_6$ and $\phi_4$ are also semi-invariants with 
\begin{equation}
\label{wt:6}
\wt(\phi_6)=3\wt(y)=2\wt(z).
\end{equation} 
It follows that the $\Gm$-action on~$X$ is determined by the one on $x_1,x_2,x_3$. 
Thus for our purposes it is sufficient to describe the induced $\Gm$-actions on $B=\PP^2$.
Note also that in the case $\phi_4\neq 0$ we have
\begin{equation}
\label{eq:wt:4-6}
2\wt(\phi_6)=3\wt(\phi_4).
\end{equation} 

First, assume that the fixed point locus of $\Gm$ on $B=\PP^2$ is one-dimensional. 
Then its one-dimensional component must be a line and in suitable coordinates we may assume that $\Gm$ acts on $B$ 
so that $x_1$ and $x_2$ are invariant.
In this case,
\begin{equation*}
\phi_4=x_3^k\psi_{4-k} (x_1,x_2),\qquad \phi_6=x_3^l\psi_{6-l} (x_1,x_2),
\end{equation*}
where $\psi_{4-k}$ and $\psi_{6-l}$ homogeneous forms of degree~$4-k$ and $6-l$, respectively.
If $\phi_4=0$, then $l=1$ by Lemma~\ref{lemma:sing0} and we obtain the case \ref{0,0,1-}.
Thus we may assume that $\phi_4\neq 0$.
In this case $2l=2\wt (x_3^l\psi_{6-l}) =3\wt (x_3^k\psi_{4-k})=3k$ by \eqref{eq:wt:4-6}
and so $k=l=0$ again by Lemma~\ref{lemma:sing0}. 
Therefore $\Lambda_4$ and $\Lambda_{6}$ are unions of lines passing through 
the point $(0,0,1)$. This contradicts Corollary~\ref{cor:w-blowup}.

Now, consider the case where the fixed point locus is zero-dimensional. Then it consists of three 
distinct points $Q_1,Q_2,Q_3\in \PP^2$ in general position.
We may assume that
the $\Gm$-action  on $B=\PP^2$ is given by
\begin{equation*}
(x_1,x_2,x_3) \longmapsto (x_1,t^m x_2,t^nx_3),\qquad 0<m\le n/2.
\end{equation*}
Let $L_i$ be the line given by $x_i=0$. Put
\begin{equation*}
d:=\gcd(n,m),\quad n^{\sharp}:=n/d,\quad m^{\sharp}:=m/d. 
\end{equation*} 
Denote $w_6:=\wt(\phi_6)/d$ and $w_4:=\wt(\phi_4)/d$.
We can write
\begin{equation*}
\phi_6=\sum _{i=1}^N a_ix_1^{k_{1,i}}x_2^{k_{2,i}}x_3^{k_{3,i}},\qquad
\phi_4=\sum _{j=1}^{N'} b_jx_1^{l_{1,j}}x_2^{l_{2,j}}x_3^{l_{3,j}},
\end{equation*}
where 
\begin{align}
\label{eq:k}
k_{1,i}+k_{2,i}+k_{3,i}&=6,\quad
m^{\sharp}k_{2,i}+n^{\sharp}k_{3,i}=w_6,
\\
\label{eq:l}
l_{1,j}+l_{2,j}+l_{3,j}&=4,\quad
m^{\sharp}l_{2,j}+n^{\sharp}l_{3,j}=w_4.
\end{align}
Admitting $a_i=0$ or $b_j=0$, we may assume that $\phi_6$ (resp. $\phi_4$) contains all the terms satisfying~\eqref{eq:k} (resp.~\eqref{eq:l}).

First consider the case where $N>1$, i.e. the system of equations~\eqref{eq:k} has at least two solutions
$(k_{1,1},k_{2,1},k_{3,1})\neq (k_{1,2},k_{2,2},k_{3,2})$.
We may assume that $k_{3,1}\ge k_{3,2}$. Then by~\eqref{eq:k}
\begin{equation*}
m^{\sharp}(k_{2,2}-k_{2,1})=n^{\sharp}(k_{3,1}-k_{3,2}), 
\end{equation*}
where both sides are positive. 
Since $\gcd(n^{\sharp},m^{\sharp})=1$, we have $k_{2,2}\equiv k_{2,1}\mod n^{\sharp}$.
In particular, $n^{\sharp}\le k_{2,2}-k_{2,1}\le 6$ and $m^{\sharp}\le 2$.
If $w_6 \equiv 0\mod 3$, we obtain the cases
\ref{0,1,2-6},
\ref{0,1,3-6},
\ref{0,1,4-6},
\ref{0,1,4-9},
\ref{0,1,5-6},
\ref{0,2,5-12},
\ref{0,2,5-15},
\ref{0,1,6-6}
as well as the following ones:
\par
\begin{tabularx}{0.9\textwidth}{llllXXl}
& $(m^{\sharp},n^{\sharp})$ & $w_6$ & $w_4$ & $\phi_6$ & $\phi_4$& $\gcd(\phi_6,\, \phi_4)$
\\
\nr & $(1,2)$ & $9$ & $6$ & $(a_1x_1x_3 + a_2x_2^2) x_2x_3^3$ & $(b_1x_1x_3 + b_2x_2^2)x_3^2$ & $x_3^2$
\\
\nr
\label{0,1,2-3}
& $(1,2)$ & $3$ & $2$ & $(a_1x_1x_3+ a_2x_2^2)x_1^3x_2 $ & $(b_1x_1x_3+b_2 x_2^2)x_1^2 $ & $x_1^2$
\\\nr
\label{0,1,3-3}
& $(1,3)$ & $3$ & $2$ & $(a_1x_1^2x_3+ a_2x_2^3)x_1^3 $ & $b_1x_1^2 x_2^2 $ & $x_1^2$ 
\\\nr
\label{0,1,3-9}
& $(1,3)$ & $9$ & $6$ & $(a_1x_1^3x_3+ a_2x_1x_2^3)x_3^2$ & $(b_1x_1^2x_3+b_2x_2^3)x_3 $ & $x_3$
\\\nr
\label{0,1,3-12}
& $(1,3)$ & $12$ & $8$ & $(a_1x_1^2x_3+ a_2x_2^3)x_3^3 $ & $bx_2^2x_3^2$& $x_3^2$
\\\nr
\label{0,1,4-12}
& $(1,4)$ & $12$ & $8$ & $(a_1x_1^3x_3+ a_2x_2^4)x_3^2$ & $bx_1^2x_3^2$& $x_3^2$
\end{tabularx}
\par
But in the cases 
\ref{0,1,2-3},\ref{0,1,3-3},
\ref{0,1,3-12}, and \ref{0,1,4-12} the polynomials $\phi_6$
and $\phi_4$ have a common multiple divisor,
and in the case \ref{0,1,3-9} they have a common divisor $x_3$ 
that is a multiple for $\phi_6$. This contradicts 
Lemma~\ref{lemma:sing0}.

Thus we may assume that $w_6 \not\equiv 0\mod 3$.
Then 
$\phi_4=0$ by \eqref{eq:wt:4-6}.
Then the polynomial $\phi_6$ has no multiple factors by Lemma~\ref{lemma:sing0}. 
In particular, $\phi_6$ is not divisible by $x_i^2$ for $i=1,2,3$.
We obtain the cases 
\ref{0,1,2-5},
\ref{0,1,3-7},
\ref{0,1,4-5},
\ref{0,1,5-5},
\ref{0,1,5-10},
\ref{0,2,5-10}, as well as the following one:
\begin{itemize}

\item 
$m^{\sharp}=1$, $n^{\sharp}=2$, $\phi_6=(a_1x_1^2x_3^2 + a_2x_1x_2^2x_3 + a_3x_2^4)x_2x_3$.
\end{itemize}
But the latter case can be reduced to 
\ref{0,1,2-5} by switching $x_1$ and $x_3$.

Now 
consider the case where $N=1$, i.e. the system of equations~\eqref{eq:k} has exactly one solution
$(k_{1,1},k_{2,1},k_{3,1})=(k_1,k_2,k_3)$. In other words, 
$\Supp(\Lambda_6)\subset L_1\cup L_2\cup L_3$. 
Then $\Lambda_6$ has a multiple component, hence $\Lambda_4\neq\PP^2$ and $2(m^{\sharp}k_2+n^{\sharp}k_3)=2w_6=3w_4\equiv 0 \mod 3$.

If $N'>1$, 
then by~\eqref{eq:l}
\begin{equation*}
m^{\sharp}(l_{2,2}-l_{2,1})=n^{\sharp}(l_{3,1}-l_{3,2}), 
\end{equation*}
where we may assume that both sides are positive.
Since $\gcd(n^{\sharp},m^{\sharp})=1$, we have $l_{2,2}\equiv l_{2,1}\mod n^{\sharp}$.
In particular, $n^{\sharp}\le l_{2,2}-l_{2,1}\le 4$ and $m^{\sharp}=1$. We obtain $w_6=6$ or $9$ 
and $N>1$. This contradicts our assumption. 
Therefore, $N'=1$, that is,
$\Lambda_4\subset L_1\cup L_2\cup L_3$ and $\phi_4=bx_1^{l_1}x_2^{l_2}x_3^{l_3}$,
where $b\in \Bbbk^*$, $l_1+l_2+l_3= 4$, and $2(m^{\sharp}k_2+n^{\sharp}k_3)=3(m^{\sharp}l_2+n^{\sharp}l_3)$.
Then $2k_2-3l_2$ is divisible by $n^{\sharp}$. In particular, $n^{\sharp}\le \max\{2k_2,\, 3l_2\}\le 12$.
Then $l_i=0$ wherever $k_i\ge 2$ by Lemma~\ref{lemma:sing0}.
We obtain the cases \ref{0,1,8-12},
\ref{0,1,10-15},
\ref{0,3,10-15}.
\end{proof}

\begin{proposition}
Let~$X$ be a nodal double Veronese cone. Then $\Aut(X)$ is finite.
\end{proposition}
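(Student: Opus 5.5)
We argue by contradiction: assume that $X$ is nodal and that $\Aut(X)$ is infinite. By the Corollary following Proposition~\ref{prop:Ga}, the identity component $\Aut(X)^0$ is then isomorphic to $\Ga$ or to $\Gm$. If $X$ is smooth, $\Aut(X)$ is already known to be finite (\cite[Lemma~4.4.1]{KPS:Hilb}). So we may assume that $X$ is singular and nodal, and all conditions of Corollary~\ref{cor:nodal} are available. The plan is to rule out the two cases separately.

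\emph{The case $\Aut(X)^0\simeq\Ga$.} By Proposition~\ref{prop:Ga}, $X$ has a unique singular point, and it is of type \type{cD}{} or \type{cE}{}. A node is of type \type{cA}{1}, so this case is impossible at once.

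\emph{The case $\Aut(X)^0\simeq\Gm$.} By Proposition~\ref{prop:torus}, in suitable coordinates $X$ is one of the families \ref{0,0,1-}--\ref{0,3,10-15} of Table~\ref{table:Gm}. We go through the rows and show that each violates Corollary~\ref{cor:nodal}.
\begin{enumerate}
\item In every row with $\phi_4=0$ we have $\Lambda_4=\PP^2$. This contradicts Corollary~\ref{cor:nodal}\ref{cor:nodal:0}, since $X$ is singular.
\item In rows \ref{0,1,4-9}, \ref{0,1,5-6}, \ref{0,2,5-12}, \ref{0,2,5-15}, \ref{0,1,6-6}, \ref{0,1,8-12}, \ref{0,1,10-15}, \ref{0,3,10-15}, the quartic $\phi_4$ is a monomial with a repeated factor ($bx_2^4$, $bx_1x_2^2x_3$, $bx_1^2x_3^2$, $bx_1^3x_3$). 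Hence $\Lambda_4$ has a multiple component, contradicting Corollary~\ref{cor:nodal}\ref{cor:nodal:Lambda4}.
\item The remaining rows are \ref{0,1,2-6}, \ref{0,1,3-6} and \ref{0,1,4-6}. In each of them we look at the torus-fixed point $Q_3=(0,0,1)$, working in the chart $x_3=1$. The aim is to show that $Q_3$ lies on $\Lambda_6$ and is a singular point of $\Lambda_4$, which contradicts Corollary~\ref{cor:nodal}\ref{cor:nodal:1}.
\begin{enumerate}
\item Row \ref{0,1,2-6}: $\phi_4^\bullet=b_1x_1^2+b_2x_1x_2^2+b_3x_2^4$ has order $\ge2$ at the origin, and every monomial of $\phi_6^\bullet$ vanishes there.
\item Row \ref{0,1,3-6}: $\phi_4^\bullet=(b_1x_1^2+b_2x_2^3)x_2$ has order $\ge3$, and again $\phi_6^\bullet(0,0)=0$.
\item Row \ref{0,1,4-6}: $\phi_4^\bullet=b_1x_1^3+b_2x_2^4$ is singular at the origin, and $\phi_6$ is divisible by $x_2^2$, so it vanishes at $Q_3$.
\end{enumerate}
If some coefficients vanish, $\Lambda_4$ may degenerate further. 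It then either acquires a multiple component or keeps the singular point at $Q_3$, and either outcome still contradicts Corollary~\ref{cor:nodal}.
\end{enumerate}

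The only step requiring care is this last check against Table~\ref{table:Gm}. One must make sure that specialisations of the coefficients $a_i,b_j$ do not escape the argument. The approach is to check that in each row the offending feature persists: either $Q_3\in\Lambda_6\cap\Sing(\Lambda_4)$, or a repeated factor in $\phi_4$. In both cases $X$ is not nodal, a contradiction, so $\Aut(X)$ is finite.
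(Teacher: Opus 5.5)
Your route is the paper's. You reduce to $\Aut(X)^0\simeq\Ga$ or $\Gm$, dispose of $\Ga$ via Proposition~\ref{prop:Ga} (quoting its \type{cD}{}/\type{cE}{} conclusion is fine), and go through Table~\ref{table:Gm}. The rows with $\phi_4=0$ are correctly excluded by Corollary~\ref{cor:nodal}\ref{cor:nodal:0}. The gap is in your items (2) and (3), where you use Corollary~\ref{cor:nodal}\ref{cor:nodal:1} and~\ref{cor:nodal:Lambda4} in their literal form. That form does not follow from Lemma~\ref{lemma:singularities} and Lemma~\ref{lemma:singularities:6}, and it fails in general.

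By \eqref{eq:singularities:3}, $X$ is singular over a point $Q\in\Lambda_6$ only if $Q\in\Lambda_4\cap\Sing(\Lambda_6)$. If $\Lambda_6$ is smooth at $Q$, then $\phi_6^\bullet$ puts a nonzero linear term into the local equation, so $X$ is smooth over $Q$ however singular $\Lambda_4$ is there. For a concrete case, take $\phi_4=\ell^2q$ with $q$ a general conic through a point $Q'\notin\ell$, and $\phi_6$ a general sextic singular at $Q'$. By Bertini and Lemma~\ref{lemma:singularities:6}, a general such $X$ has exactly one node, lying over $Q'$. Yet $\Lambda_4$ has a double component, and every point of $\ell\cap\Lambda_6$ lies in $\Sing(\Lambda_4)\cap\Lambda_6$. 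So neither ``$Q_3\in\Lambda_6\cap\Sing(\Lambda_4)$'' nor ``$\phi_4$ has a repeated factor'' gives a contradiction by itself.

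In rows \ref{0,1,2-6}, \ref{0,1,3-6}, \ref{0,1,4-6} you need what the paper actually checks: $Q_3$ is singular on \emph{both} curves. Here $\phi_6(x_1,x_2,1)$ has order $\ge 3$, $\ge 4$, $\ge 5$ at the origin, so $Q_3\in\Lambda_4\cap\Sing(\Lambda_6)$ and $X$ is singular over $Q_3$. That point is not a node because $\phi_4(x_1,x_2,1)$ has no linear term. Both facts survive every specialization of the $a_i,b_j$ with $\phi_4\neq 0$.

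For item (2), replace the multiple-component argument as follows.
\begin{itemize}
\item In rows \ref{0,1,4-9}, \ref{0,1,5-6}, \ref{0,2,5-12}, \ref{0,2,5-15}, \ref{0,1,8-12}, \ref{0,1,10-15}, \ref{0,3,10-15}, the forms $\phi_4$ and $\phi_6$ share a linear factor $x_i$. This contradicts Corollary~\ref{cor:nodal}\ref{cor:nodal:CC}, which is a genuine consequence of the lemmas. The common line meets the residual quintic of $\Lambda_6$ in points of $\Lambda_4\cap\Sing(\Lambda_6)$, and there the singularity cannot be a node.
\item Row \ref{0,1,6-6} goes exactly like (3), since $\phi_6(x_1,x_2,1)=a_1x_1^5+a_2x_2^6$.
\end{itemize}
The paper's own sentence excluding $\Lambda_4\subset L_1\cup L_2\cup L_3$ leans on Corollary~\ref{cor:nodal} in the same literal way, so it needs the same repair.
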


\begin{proof}
Assume the contrary.
Then there exists a one-dimensional connected subgroup $G\subset \Aut(X)$ so that $G\simeq \Ga$ or $\Gm$.
Note that $\Lambda_4\neq\PP^2$ by Corollary~\ref{cor:nodal}\ref{cor:nodal:0}
and $\Lambda_4$ and $\Lambda_6$ have no common components by Corollary~\ref{cor:nodal}\ref{cor:nodal:CC}.
If $G\simeq \Ga$, then~$X$ has a unique singular point, say~$P$,
and both $\Lambda_4$ and $\Lambda_6$ are singular at $\tau(P)$ by Proposition~\ref{prop:Ga}.
This contradicts Corollary~\ref{cor:nodal}\ref{cor:nodal:1}.

Thus we may assume that $G\simeq \Gm$. We use the notation of the proof of Proposition~\ref{prop:torus}.
If the fixed point locus of $G$ on $B$ has positive dimension, then we are in the case \ref{0,0,1-}
of Proposition~\ref{prop:torus}. But then $\Lambda_4=\PP^2$, a contradiction.

Therefore, the fixed point locus of $G$ on $B$ consists of 
three distinct points $Q_1,Q_2,Q_3\in \PP^2$ in general position.
If $\Lambda_6\subset L_1\cup L_2\cup L_3$, then $\Lambda_6=2(L_1+ L_2+ L_3)$ by Corollary~\ref{cor:nodal}\ref{cor:nodal:Lambda6}.
In this case the variety
$X$ has a singularity over each point in $\Supp(\Lambda_4)\cap\Supp(\Lambda_6)$ 
by \eqref{eq:singularities:3}.
On the other hand,
each point in $\Supp(\Lambda_4)\cap\Supp(\Lambda_6)$ must be fixed by $G$,
hence $\Supp(\Lambda_4)\cap\Supp(\Lambda_6)\subset \{Q_1,Q_2,Q_3\}$.
Then by Lemma~\ref{lemma:singularities:6} we have
\begin{equation*}
24= \Lambda_4\cdot \Lambda_6=\sum_{i=1}^3 (\Lambda_4\cdot \Lambda_6)_{Q_i}\le 6,
\end{equation*}
a contradiction. Therefore, $\Lambda_6\not\subset L_1\cup L_2\cup L_3$.
By Corollary~\ref{cor:nodal}\ref{cor:nodal:Lambda6} we also have $\Lambda_4\not\subset L_1\cup L_2\cup L_3$.
Thus we are left with the cases \ref{0,1,2-6}, 
\ref{0,1,3-6},
\ref{0,1,4-6}.
However, in these cases 
the point $Q_3$ 
is singular for both $\Lambda_4$ and $\Lambda_6$. 
This contradicts Corollary~\ref{cor:nodal}\ref{cor:nodal:1}.
\end{proof}

\section{Rationality and unirationality}
\label{sec:rat}

It is known that a smooth double Veronese cone is not rational.
For general varieties of this type this was proved via degeneration of intermediate Jacobians \cite{Tyurin-middle-Jacobian-e}.
The non‑rationality of arbitrary smooth double Veronese cones was established in \cite{Grinenko:V1MFS}. Moreover \cite{Grinenko:V1MFS} proves that 
there are no any birational conic bundle structures in the smooth case.
In this section we discuss rationality and unirationality questions of \textit{singular} double Veronese cones.
Our main result is Theorem~\ref{thm:rat-unirat}.

\begin{theorem}
\label{thm:rat-unirat}
Let~$X$ be a double Veronese cone.
\begin{enumerate}

\item 
\label{thm:rat-unirat:u} 
If~$X$ is singular, then~$X$ is unirational.

\item 
\label{thm:rat-unirat:r} 
If~$X$ has a singular point $P$ of type \type{cE}{7}, \type{cE}{8} or \type{cD}{n} with $n\ge 6$, then~$X$ is rational.

\item 
\label{thm:rat-unirat:c} 
If~$X$ has a singular point $P$ that is not of type \type{cA}{1}, \type{cA}{2} nor of type \type{cD}{4}, then~$X$ has a \textup(birational\textup) conic bundle structure.
\end{enumerate}
\end{theorem}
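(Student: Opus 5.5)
The plan is to reduce everything to the geometry of a degree~$1$ Du Val del Pezzo surface over a non-closed field, using the pencil of members of $|A_X|$ through the singular point. Let $P\in X$ be a singular point and $Q:=\tau(P)\in B$. Since $O$ is a smooth point, $\tau$ is defined near $P$. The members of $|A_X|$ passing through $P$ are exactly the surfaces $S_l:=\tau^{-1}(l)$ for lines $l\subset B$ through $Q$, so they form a pencil $\{S_l\}_{l\ni Q}$. Its base locus is the fibre $\tau^{-1}(Q)$, which is an irreducible curve of arithmetic genus~$1$ containing $O$ and $P$.

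\emph{Step 1: the generic fibre.} Blowing up the base curve $\tau^{-1}(Q)$ turns this pencil into a fibration $X'\to\PP^1$. By Proposition~\ref{prop:finite}, all but finitely many $S_l$ have at worst Du Val singularities. So the generic fibre $S_\eta$, over $K:=\Bbbk(\PP^1)$, is a Du Val del Pezzo surface of degree~$1$. It has two $K$-rational points: the smooth point $O$, and the point $P$, which is singular on $S_\eta$ because $P$ is a singular point of the threefold $X$ lying on every $S_l$.

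The singularity type of $S_\eta$ at $P$ is at least as bad as that of a general hyperplane section through $P$. Indeed, the $S_l$ are special hyperplane sections, and specializing a section only makes its singularity worse. Hence, if $P$ is of type \type{cE}{7}, \type{cE}{8}, or \type{cD}{n}, then $S_\eta$ has at $P$ a point of type \type{E}{7}, \type{E}{8}, or \type{D}{m} with $m\ge n$, or worse. The precise type is visible from the local equation \eqref{eq:eq-chart} restricted to a general line through $Q$.

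\emph{Step 2: arithmetic of $S_\eta$.} This is where the appendix on Du Val del Pezzo surfaces over non-closed fields enters. I would establish, or invoke, three statements for a degree~$1$ Du Val del Pezzo surface $S$ over a field $K$ of characteristic~$0$ with a $K$-rational singular point.
\begin{enumerate}
\item $S$ is $K$-unirational. One route: on the minimal resolution, the exceptional curves over $P$ form a Galois-stable configuration. Contracting a suitable Galois-stable set of $(-1)$-curves, or using the projection from $P$, gives a del Pezzo surface of higher degree with a $K$-point, and such a surface is $K$-unirational by the classical results of Segre and Manin.
\item If the singularity at $P$ is of type \type{E}{7}, \type{E}{8}, or \type{D}{m} with $m\ge6$, then $S$ is $K$-rational. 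Here the rank of the Galois-invariant Picard lattice of the resolution is forced to be large. Contracting Galois orbits of disjoint $(-1)$-curves should reach $\PP^2_K$, or a quadric with a $K$-point.
\item If the type is none of \type{A}{1}, \type{A}{2}, \type{D}{4}, then the invariant lattice contains a conic class, that is, a class $F$ with $F^2=0$ and $-K\cdot F=2$, defined over $K$. So $S$ is $K$-birational to a conic bundle over $\PP^1_K$.
\end{enumerate}

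\emph{Step 3: globalizing.} $K$-unirationality of $S_\eta$ gives a dominant rational map $\PP^2\times\PP^1\dashrightarrow X$, which proves~\ref{thm:rat-unirat:u}. $K$-rationality of $S_\eta$ gives $X\sim_{\mathrm{bir}} S_\eta\times_K\PP^1\sim_{\mathrm{bir}}\PP^3$, which proves~\ref{thm:rat-unirat:r}. A conic bundle structure on $S_\eta$ over $\PP^1_K$ spreads out to a conic bundle over a rational surface, which proves~\ref{thm:rat-unirat:c}.

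The main obstacle is Step~2. It requires a case-by-case analysis of the possible Galois actions on the configuration of $(-1)$-curves and exceptional curves on the resolution of $S_\eta$, for each Du Val type. The excluded types \type{A}{1}, \type{A}{2}, \type{D}{4} should be exactly those where the Galois group can act so that no invariant conic class exists. I would first treat the types containing $P$ that give a large invariant sublattice, namely \type{E}{n} and \type{D}{n}, and then handle the \type{A}{n} types via explicit blow-down models.
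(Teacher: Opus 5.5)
Your overall architecture is the paper's: blow up the fibre $C=\tau^{-1}(Q)$ through $P$, pass to the generic fibre $S_\eta$ over $\KK=\Bbbk(\PP^1)$, observe that it is a Du Val del Pezzo surface of degree $1$ with a $\KK$-rational singular point no better than the general hyperplane section of $P\in X$, and spread out. But part (i) has a genuine gap.

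The only $\KK$-points you produce are $O$ and $P$, together with the points of $C_\KK$, which all lie on one anticanonical curve. You then invoke Segre--Manin for a higher-degree del Pezzo surface with a $\KK$-point. That works in degree $\ge 3$, but the $\KK$-MMP $\psi\colon\tilde S\to\hat S$ on the minimal resolution need not stop there:
\begin{itemize}
\item It can end on a degree-$2$ del Pezzo surface with $\uprho(\hat S)=1$. For example, if $P$ is of type \type{A}{1} and $\uprho(S_\eta)=1$, then $\uprho(\tilde S)=2$, and $\psi$ contracts the $\KK$-rational $(-1)$-curve $\tilde C$, the proper transform of the anticanonical curve through $P$.
\item It can also end on a conic bundle, where you would need Koll\'ar--Mella.
\end{itemize}
In degree $2$ the known criteria (Manin, Salgado--Testa--V\'arilly-Alvarado) require a $\KK$-point off the exceptional curves and off the ramification curve. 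Your points are exactly in bad position. In the example above, $O\in\tilde C$ is sent to the singular point of the anticanonical curve $\psi_*E\in|-K_{\hat S}|$, and that point lies on the ramification curve of $\hat S\to\PP^2$.

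The paper closes this by proving that $S_\eta(\KK)$ is Zariski dense. By Theorem~\ref{thm:lines} and Corollary~\ref{cor:lines}, proper transforms of lines disjoint from $C$ are sections of $\hat\varphi$ through a general point of $\hat X$ (Proposition~\ref{prop:fib}). This is the whole purpose of Section~\ref{sec:lines}, and your proposal has no substitute for it.

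For (ii) and (iii), Step~2 is a programme rather than an argument, and the case-by-case Galois analysis you anticipate can be avoided. The missing input is Lemma~\ref{lemma:kP}: Galois permutes the exceptional curves over the $\KK$-point $P$ through automorphisms of the Dynkin graph, so $\uprho(\tilde S/S)\ge \mathrm{s}(P)$. Combined with the $\KK$-MMP $\tilde S\to\hat S$ (with $\uprho(\hat S)\le 2$), this gives $K_{\hat S}^2\ge 2+\mathrm{s}(P)-\uprho(\hat S)$. Theorem~\ref{thm:smoothDP} then yields:
\begin{itemize}
\item rationality when $\mathrm{s}(P)\ge 5$;
\item a conic bundle unless $\hat S$ is a del Pezzo surface of degree $\le 3$ with $\uprho(\hat S)=1$, which forces $\mathrm{s}(P)\le 2$.
\end{itemize}
However, \type{A}{3} and \type{A}{4} have $\mathrm{s}(P)=2$, just like \type{D}{4}, so no rank count separates them. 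Your expected dichotomy needs the paper's extra step. There, $\tilde C$ is a $\KK$-rational $(-1)$-curve with $\tilde C\cdot\sum a_iE_i=2$. This forces the second curve contracted onto the cubic surface $\hat S$ to be some $E_j$. That rules out \type{A}{4} by a Picard rank count. For \type{A}{3}, it shows $\psi_*\sum a_iE_i$ is a reducible hyperplane section of $\hat S$, and its conic component gives the conic bundle. Your sketch, with its ``explicit blow-down models'', leaves exactly this step open.
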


\begin{remark}
\begin{enumerate}
 \item
The existence of conic bundle structures on singular Fano threefolds 
was studied in \cite{P:fano-conic}. Thus the part \ref{prop:surf:c} of the theorem
is a small supplement to Theorem~1.2 in \cite{P:fano-conic}. 

\item 
Note that a double Veronese cone~$X$ is rational if $\rr(X)\ge 4$ 
and it has a conic bundle structure $\rr(X)\ge 3$ (see \cite{P:GFano1}).

\item 
Also any double Veronese cone whose automorphism group 
is infinite (see Theorem~\ref{thm:aut}) is also rational.
Indeed, let $G\subset \Aut(X)$ be a one-dimensional connected subgroup.
Thus $G\simeq \Ga$ or $\Gm$. There exists a non-empty open subset $U\subset X$
such that the stabilizer in $G$ of any point $P\in U$ is trivial.
Then the quotient $h: U \to U/G$ is a principal homogeneous space of $G$,
where $U/G$ is a rational surface (because $U/G$ is dominated by a general member 
of $|A_X|$). Moreover, the general fiber $U_\eta$ has a $\Bbbk(U/G)$-point,
hence $h$ has a rational section. Thus $U$ is birationally equivalent to 
the product $(U/G)\times G$, hence it is rational.
\end{enumerate}
\end{remark}

The proof of Theorem~\ref{thm:rat-unirat} is based in the following easy construction
and corresponding facts on singular del Pezzo surfaces over nonclosed fields
see Appendix~\ref{sec:app}.

\begin{proposition}
\label{prop:fib}
In the notation of \eqref{eq:preV1} let~$C$ be a fiber of $\tau$ and let $\PPP\subset |A_X|$ be the pencil of members passing through~$C$.
Let $\hat{\sigma}: \hat{X}\to X$ be the blowup of~$C$.
Then the proper transform $\hat \PPP$ of $\PPP$ on $\hat{X}$ is base point free and defines a contraction $\hat{\varphi}: \hat{X}\to \PP^1$ whose general geometric fiber $\hat{F}$ is a Du Val del 
Pezzo surface of degree~$1$. 
The restriction $\hat{\sigma}_F: \hat{F}\to \hat{\sigma}(\hat{F})$ is an isomorphism and $\hat{\sigma}(F)$ is smooth outside $\Sing(C)$. In particular, $\hat{F}$ has at most one singular point.
Moreover, there is a Zariski open subset $\hat U\subset \hat X$ such that for any $P\in\hat U$ there exists a section of $\hat{\varphi}$ passing through $P$.
\end{proposition}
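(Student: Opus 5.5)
The plan is to work in coordinates. Let $C=\tau^{-1}(Q)$ with $Q\in B$, and choose coordinates so that $Q=(0,0,1)$. Then $\PPP$ is the pencil $\{\lambda x_1+\mu x_2=0\}$ and $C=\{x_1=x_2=0\}\subset X$. The curve $C$ is a scheme-theoretic complete intersection of two Cartier divisors $S_1,S_2\in|A_X|$; by Remark~\ref{rem:fiber} it is reduced and irreducible. Hence the blowup $\hat{X}=\mathrm{Bl}_C X$ embeds in $X\times\PP^1$ as the hypersurface $\{\mu x_1-\lambda x_2=0\}$. The map $\hat\varphi$ is the second projection, and $\hat\PPP$ is base point free because it is the pullback of $|\OOO_{\PP^1}(1)|$.

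For $[\lambda:\mu]\in\PP^1$, the fiber $\hat{F}$ maps isomorphically onto the member $S_{[\lambda:\mu]}=\{\mu x_1=\lambda x_2\}\cap X$. This holds because the projection $\hat X\to X$ restricted to $\hat X\cap (X\times\{pt\})$ is a closed embedding with image exactly that member. The general member of $\PPP$ has at worst Du Val singularities: by Proposition~\ref{prop:finite}, only finitely many members of $|A_X|$ have singularities worse than Du Val, and the members of non-normal type also form a finite set. So the general geometric fiber of $\hat\varphi$ is a Du Val del Pezzo surface of degree~$1$.

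For smoothness outside $\Sing(C)$, I will apply a Bertini argument to the pencil. Its base locus is $C$, so the general member is smooth away from $C\cup\Sing(X)$. Every point of $\Sing(X)$ lying on a member of $\PPP$ lies on $C$, since the members are unions of fibers of $\tau$; the fibers of $\tau$ through other singular points lie in a single member, and the general member avoids them. Along $C$, at a smooth point $p$ of $C$, the curve $C$ is a local complete intersection of $S_1,S_2$, and $p$ is a smooth point of $X$. Indeed, if $X$ were singular at $p$, then $C$ would be singular at $p$, since $C$ is cut out by two equations in a singular threefold, which forces the embedding dimension $\ge 2$ for a curve. Hence the local equations $x_1,x_2$ form part of a regular system of parameters at $p$, and every member $\lambda x_1+\mu x_2$ is smooth at $p$. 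Thus $S_{[\lambda:\mu]}$ is smooth outside $\Sing(C)$, which has at most one point since $C$ is a nodal or cuspidal curve or is smooth. Consequently $\hat F$ has at most one singular point.

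For the sections, consider the $\sigma$-exceptional divisor structure. The base point $O\in C$, and each member of $\PPP$ is smooth at $O$ (Theorem~\ref{thm:preV1}) with $C$ passing through $O$. Sections of $\hat\varphi$ can be produced as follows. The curve $C$ meets each $\hat F$, viewed as a curve in $\hat X$: the exceptional divisor $\hat E=\hat\sigma^{-1}(C)\simeq C\times\PP^1$ (over smooth points), and for each point $c\in C$ the fiber $\{c\}\times\PP^1$ is a section of $\hat\varphi$. This gives sections only through $\hat E$, though, so to cover an open set I will instead use the lines. By Theorem~\ref{thm:lines} and Corollary~\ref{cor:lines}, a general point $P\in X$ lies on a line $L$ not contained in any member of $\PPP$; equivalently, $\tau(L)$ is a line not through $Q$. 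Then $L\cdot S=A_X\cdot L=1$ for $S\in\PPP$, and $L\cap C=\varnothing$, because $\tau(L)\not\ni Q$ and $O\notin L$. So the proper transform of $L$ meets each fiber $\hat F$ in exactly one point and is a section through $\hat\sigma^{-1}(P)$.

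The main obstacle is guaranteeing that through a general point there is a line with $\tau(L)\not\ni Q$. Lines through a general point move in a family that covers $X$, and by Corollary~\ref{cor:lines} the lines with $\tau(L)\ni Q$ form at most a one-dimensional family. Their union is therefore at most a surface, so its complement gives the required open set $\hat U$.
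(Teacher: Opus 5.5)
Your proposal is correct and is essentially the paper's argument. You blow up the base scheme $C$ of the pencil; your model $\hat X=\{\mu x_1-\lambda x_2=0\}\subset X\times\PP^1$ is valid because $x_1,x_2$ are locally a regular sequence on the Cohen--Macaulay threefold $X$, and it is just a concrete form of the paper's remark that $\hat F$ is the blowup of $F$ along the Cartier divisor $C$. You then use Bertini and Proposition~\ref{prop:finite} for the fibers, and take proper transforms of lines missing $C$ (Theorem~\ref{thm:lines}, Corollary~\ref{cor:lines}) as sections.

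Two small points. Your clause that every singular point of $X$ on a member of $\PPP$ lies on $C$ is false as written, though what you actually need, and do justify, is that a general member misses the finite set $\Sing(X)\setminus C$. You also omit the normality of $\hat X$, which the paper checks in order to call $\hat\varphi$ a contraction. It follows at once from your model: $\hat X$ is a Cartier divisor in $X\times\PP^1$, hence Cohen--Macaulay, and it is smooth outside $\hat\sigma^{-1}\bigl((\Sing(X)\setminus C)\cup\Sing(C)\bigr)$, a set of dimension at most one.
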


\begin{proof}
All assertions are almost obvious. Indeed, since~$C$ is the scheme base locus of $\PPP$,
the proper transform $\hat \PPP$ of $\PPP$ under the blowup of~$C$ is base point free, hence it defines 
a morphism $\hat{\varphi}: \hat{X}\to \PP^1$ so that any fiber $\hat{F}$ of $\hat{\varphi}$ is 
a member of $\hat \PPP$. Let $F:=\hat{\sigma}(\hat{F})$. Then $F\in \PPP$ and $\hat{F}$ is the blowup of 
$F$ along the curve~$C$ which is a Cartier divisor on $F$. Therefore, $\hat{F}\simeq F$
is a Gorenstein (possibly non-normal) del Pezzo surface of degree~$1$.
In particular, $\hat{F}$ is Cohen-Macaulay and so $\hat{X}$ is.
On the other hand, by Bertini's theorem a general member $F\in \PPP$ is smooth outside $\Sing(C)$ and a general fiber of $\hat{\varphi}$ has at worst Du Val singularities by 
Proposition~\ref{prop:finite}.
Thus $\hat{X}$ is smooth in codimension one, hence it is normal. We obtain
a contraction $\hat{\varphi}: \hat{X}\to \PP^1$ satisfying the desired properties and the following diagram
\begin{equation}
\label{eq:diag:bl-c}
\vcenter{
\xymatrix @R=1.5em@C=4em{
&\hat{X}\ar[dl]_{\hat \sigma}\ar[dd]_{\hat{\varphi}}
\\
X\ar@{-->}[d]^{\tau}& 
\\
\PP^2\ar@{-->}[r]^{\beta} &\PP^1
}}
\end{equation} 

The proper transform $\hat L\subset\hat X$ of a line $L\subset X$ is a section of 
$\hat{\varphi}$ if and only if $L$ does not meet~$C$.
By Theorem~\ref{thm:lines} and Corollary~\ref{cor:lines} such a line exists 
and moreover, these lines fill a Zariski open subset.
\end{proof} 

\begin{proof}[Proof of Theorem~\xref{thm:rat-unirat}]
In the notation of Proposition~\ref{prop:fib} take the fiber~$C$
passing through a singular point $P\in X$.
Let $\hat X_{\eta}$ be the generic scheme fiber of $\hat \varphi$. This is a Du Val del Pezzo surface over the field $\KK:=\Bbbk(\PP^1)$ and
the set $X_{\eta}(\Bbbk)$ of $\KK$-points is Zariski dense 
in $X_{\eta}$. 
Moreover, $\hat\sigma^{-1}(P)$ is a section of $\hat\varphi$ that induces 
a singular point $P_\eta\in \hat X_{\eta}$. The type of the singularity 
$P_\eta\in \hat X_{\eta}$ is ``not better'' 
than the one of a general geometric hyperplane section of 
$P\in X$.
Now Theorem~\ref{thm:rat-unirat} follows from Proposition~\ref{prop:surf} in the appendix.
\end{proof}

\begin{remark}
Note that the construction \eqref{eq:diag:bl-c} is equivariant with respect to the Bertini involution $\upbeta: X\to X$.
\end{remark}

\appendix 

\section{Du Val del Pezzo surfaces over nonclosed fields}
\label{sec:app}

In this section we work over an arbitrary field $\KK$ of characteristic zero and $\bar \KK$ denotes its algebraic closure.
We discuss birational properties of del Pezzo surfaces of degree~$1$ over $\KK$.

\begin{proposition}
\label{prop:surf}
Let~$S$ be a Du Val del Pezzo surface of degree~$1$. 
\begin{enumerate}

\item 
\label{prop:surf:u} 
If~$S$ is singular and the set $S(\KK)$ of $\KK$-points
is Zariski dense in~$S$, then~$S$ is unirational.

\item 
\label{prop:surf:r} 
If~$S$ has a singular point $P$ of type \type{E}7, \type{E}8, or \type{D}n with $n\ge 6$, then~$S$ is rational.

\item 
\label{prop:surf:c} 
If~$S$ has a singular point $P\in S$ that is not of type \type{A}1, \type{A}2 nor of type \type{D}4, then~$S$ has a \textup(birational\textup) conic bundle structure.
\end{enumerate}
\end{proposition}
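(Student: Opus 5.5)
Throughout, I use the minimal resolution $\mu:\tilde S\to S$ over $\KK$. It is a weak del Pezzo surface of degree~$1$, and the Galois group $\Gamma=\mathrm{Gal}(\bar\KK/\KK)$ acts on $\Pic(\tilde S_{\bar\KK})$. It preserves $K_{\tilde S}$ and the configuration of $(-2)$-curves, i.e. it acts on the Dynkin diagram of $\Sing(S_{\bar\KK})$ by diagram automorphisms. Since $K_S^2=1$, $S$ has the $\KK$-point $O=\Bs|-K_S|$, and $|-2K_S|$ exhibits $S$ as a double cover of a quadric cone. The common engine is to run the $\Gamma$-equivariant MMP on $\tilde S$ after blowing up suitable rational points or contracting $\Gamma$-invariant configurations. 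The singular point $P$ in \ref{prop:surf:r} and \ref{prop:surf:c} is unique of its type, or at least its orbit is controlled. A point of type \type{E}{7}, \type{E}{8} or \type{D}{n} with $n\ge 6$ is the unique singular point of that type, since the total Milnor rank is at most $8$, so it is defined over $\KK$.

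For \ref{prop:surf:c}, I take the $\KK$-point $P$ and consider the exceptional configuration $\mu^{-1}(P)$. Its Galois-stable part determines a $\Gamma$-invariant class in $\Pic(\tilde S_{\bar\KK})^\Gamma$. I aim to find a $\Gamma$-invariant class $F$ with $F^2=0$ and $-K\cdot F=2$, namely a conic class, supported near $\mu^{-1}(P)$. Concretely, for an $\mathrm{A}_n$ point with $n\ge3$ I would blow up $P$ on $S$ (or use the projection from $P$). For $n\ge 3$ the tangent cone is a pair of planes over $\bar\KK$, possibly conjugate, but the linear system $|-K_S-\ldots|$ of curves through $P$ with a prescribed multiplicity is Galois-invariant. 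So the pencil $|-2K_S - 2\mathcal{E}_P|$ gives a conic fibration, where $\mathcal E_P$ is the fundamental cycle. For $\mathrm{D}_n$ with $n\ge5$ and for $\mathrm{E}_n$, the fundamental cycle has multiplicity-$2$ components. Using these, the double cover description shows that projection from $P$ on the quadric cone gives a pencil of conics. The exclusion of \type{A}1, \type{A}2, \type{D}4 should correspond exactly to the cases where this class is not $\Gamma$-invariant or not of square $0$: triality on \type{D}4, and the Galois swap for \type{A}2.

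For \ref{prop:surf:r}, I start from the conic bundle of \ref{prop:surf:c} and show that it has a rational section, or that the $\Gamma$-minimal model has $K^2\ge5$. With a point of type \type{E}{7}, \type{E}{8} or \type{D}{n}, $n\ge6$, the Picard rank of $\Pic(\tilde S)^\Gamma$ is forced to be large, because the diagram has no or only small automorphisms. The number of degenerate fibres of the conic bundle is then at most $3$, so by Iskovskikh's criterion the surface is rational. Equivalently, I contract $\Gamma$-invariant $(-1)$-curves meeting the end of the long chain step by step, reaching a del Pezzo surface of degree $\ge5$ with a rational point, which is rational.

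For \ref{prop:surf:u}, Zariski density of $S(\KK)$ lets me pick a general rational point. Following the Manin--Kollár approach for low-degree del Pezzo surfaces, I blow up a singular geometric point structure. Alternatively, I use the base change trick: take a rational curve $C\subset S$ through the singular locus, e.g. a member of $|-K_S|$ through $P$ with a node, and base-change $S$ over $C$ to produce a surface with a section. Concretely, projection from the singular point via $|-2K_S|$ gives a map that is generically a double cover of the quadric cone, and its restriction yields a degree-$2$ map to a rational surface over a quadratic extension parametrized by rational points. The main obstacle is \ref{prop:surf:u} when the singular point is not defined over $\KK$, e.g. a Galois orbit of \type{A}{1} points. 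There I would first try to show that the quotient by the Bertini involution, a quadric cone, together with a rational point on a singular anticanonical curve, gives a unirational parametrization via the standard tangent-line trick on the double cover.
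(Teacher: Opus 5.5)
Your proposal has genuine gaps in parts (i) and (iii).

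\textbf{Part (i).} This part is not actually argued. The options you list (blowing up ``a singular geometric point structure'', base change over a nodal anticanonical curve, a tangent-line trick) are never carried out. The ``concrete'' one goes the wrong way: $|-2K_S|$ is the double cover of the quadric cone, not a projection from $P$, and a degree-$2$ map \emph{from} $S$ onto a rational surface says nothing about unirationality of $S$. You also leave open the case of non-$\KK$-rational singular points.

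The missing idea is that no case distinction is needed. The minimal resolution $\tilde S$ and a $\KK$-MMP $\psi\colon\tilde S\to\hat S$ exist over $\KK$ regardless of where the singular points lie. Since $S$ is singular, $-K_{\tilde S}$ is not ample. So if $\hat S$ is a del Pezzo surface of Picard rank $1$, then $\psi$ is not an isomorphism and $K_{\hat S}^2\ge 2$; Manin's theorem (Salgado--Testa--V\'arilly-Alvarado in degree $2$, using density of rational points) then gives unirationality. Otherwise $\hat S$ is a conic bundle with $K_{\hat S}^2\ge 1$ and dense rational points, which is unirational by Koll\'ar--Mella.

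\textbf{Part (ii).} Your second formulation is the paper's idea, but it must be made quantitative. We have $\uprho(\tilde S)\ge 1+\mathrm{s}(P)$ and $\uprho(\hat S)\le 2$, and each contraction raises $K^2$ by at least the drop in $\uprho$. Hence $K_{\hat S}^2\ge\mathrm{s}(P)\ge 5$, and Iskovskikh's theorem applies. The ``at most three degenerate fibres'' claim is unsupported and unnecessary.

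\textbf{Part (iii).} The class you propose is not a conic class. The fundamental cycle has $\mathcal E_P^2=-2$ and $K\cdot\mathcal E_P=0$, so $(-2K-2\mathcal E_P)^2=4-8=-4$. It is also always Galois-invariant, so it cannot account for the excluded types; indeed the \type{A}{2} lattice has no vector of square $-4$ at all. The \type{D}{n}, \type{E}{n} case is only asserted.

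Your idea of a Galois-invariant conic class supported on $\mu^{-1}(P)$ can be repaired. Choose a Galois-stable pair of disjoint components $E_a,E_b$ of $\mu^{-1}(P)$:
the ends of an \type{A}{n}-chain with $n\ge 3$;
the ends of the long chain for \type{E}{6};
the two short legs for \type{D}{n} with $n\ge 5$;
any disjoint pair for \type{E}{7}, \type{E}{8}.
Set $F=-2K_{\tilde S}-E_a-E_b$, so that $F^2=0$ and $-K_{\tilde S}\cdot F=2$.

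This $F$ is usually not nef (e.g.\ $F\cdot E_2<0$ for \type{A}{n}). You must therefore move it by the Weyl group of the $(-2)$-curves into the chamber where it meets all $(-2)$-curves non-negatively. That representative $F_0$ is unique, hence Galois-invariant, and it is effective by Riemann--Roch. It is nef: if $F_0\cdot\ell<0$ for a $(-1)$-curve $\ell$, then $(F_0-\ell)^2\ge1$ and $-K_{\tilde S}\cdot(F_0-\ell)=1$. The Hodge index theorem then forces $F_0-\ell\equiv-K_{\tilde S}$, giving $F_0^2=2$, a contradiction. So $|F_0|$ gives a conic bundle over $\KK$.

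The paper proceeds differently. It uses the MMP bound $K_{\hat S}^2\ge\mathrm{s}(P)+1$ and Iskovskikh's theorem to reduce to $K_{\hat S}^2=3$ with $P$ of type \type{A}{3} or \type{A}{4}. It then shows that the proper transform of the anticanonical curve through $P$ is a $\KK$-rational $(-1)$-curve, which leads to a $\KK$-conic on the cubic surface $\hat S$.
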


Recall the following classical fact.

\begin{theorem}[{\cite{Iskovskikh:79s-e}, \cite{Iskovskikh:Factorization-e}}]
\label{thm:smoothDP}
Let~$S$ be a smooth geometrically rational surface with $S(\KK)\neq \varnothing$. 
\begin{enumerate}

\item 
If $K_S^2\ge 5$, then~$S$ is rational.

\item 
If $K_S^2\ge 4$, then~$S$ has a conic bundle structure.
\end{enumerate}
\end{theorem}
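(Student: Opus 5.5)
The plan is to run the minimal model program for surfaces over $\KK$ and then treat each possible minimal model separately.

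\textbf{Reduction to a minimal model.} Contract Galois-invariant sets of pairwise disjoint $(-1)$-curves, defined over $\KK$, one orbit at a time, until no such set remains. This produces a birational morphism $S\to S'$ onto a $\KK$-minimal surface. Each contraction strictly increases $K^2$, so $K_{S'}^2\ge K_S^2$. The existence of a $\KK$-point is preserved, since the image of a $\KK$-point is a $\KK$-point. By Iskovskikh's classification of minimal geometrically rational surfaces, $S'$ is one of two kinds:
\begin{enumerate}
\item a del Pezzo surface with $\uprho(S')=1$;
\item a conic bundle $S'\to C$ with $\uprho(S')=2$, where $C$ is a conic.
\end{enumerate}
In the second case the conic $C$ has a $\KK$-point (the image of the one on $S'$), so $C\simeq\PP^1$ and $S$ already has a conic bundle structure. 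This gives (ii) in that case.

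\textbf{Rationality when $K^2\ge 5$.} Suppose $S'$ is a conic bundle with $K_{S'}^2\ge5$. Then it has at most $8-K_{S'}^2\le 3$ degenerate fibres. A relatively minimal conic bundle of this kind with a $\KK$-point is not minimal over $\KK$ unless $K^2\in\{8\}$, and in that case it is a $\PP^1$-bundle. So I would invoke Iskovskikh's results to conclude that $S'$ is either rational directly or a del Pezzo surface of rank one, and go on to the del Pezzo case.

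For rank-one del Pezzo surfaces with a $\KK$-point $p$ and $K^2\ge5$, the degree is $9$, $8$, $6$ or $5$ (degree $7$ is never minimal).
\begin{enumerate}
\item Degree $9$: $S'$ is a Severi--Brauer surface with a point, hence $S'\simeq\PP^2$.
\item Degree $8$: $S'$ is a quadric with a point, and projection from $p$ gives rationality.
\item Degree $6$: blow up $p$. If $p$ lies off the hexagon of lines, this gives a degree $5$ surface, or projection from $p$ yields a birational map to a surface of higher degree.
\item Degree $5$: the surface is always rational (Enriques--Swinnerton-Dyer). The standard argument uses the $\KK$-form of the set of five conic bundles, or projection from a point.
\end{enumerate}

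\textbf{The case $K^2=4$ for (ii).} Suppose $S'$ is a rank-one quartic del Pezzo surface with a point $p$. If $p$ lies on no line, blowing up $p$ gives a cubic surface containing the exceptional $\KK$-line. The linear system of conics residual to that line in plane sections through it gives a conic bundle over $\PP^1$.

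The main obstacle is the case where every $\KK$-point lies on lines. Here I would first show that a quartic del Pezzo surface with one $\KK$-point is unirational, hence has Zariski dense $\KK$-points. The tool is the classical construction via the tangent section at $p$, which is a curve of arithmetic genus one with a singular point. Density then lets me move $p$ off the lines. The degenerate situation, where $p$ lies on a line defined over $\KK$, contradicts minimality, because that line would be a contractible $\KK$-curve.
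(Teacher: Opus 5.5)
The paper gives no proof of this statement; it is quoted from Iskovskikh. Your outline is the classical argument behind those references: pass to a $\KK$-minimal model, use the dichotomy between rank-one del Pezzo surfaces and rank-two conic bundles, then go through the degrees. It is sound modulo the inputs you cite, namely non-minimality of standard conic bundles with $K^2\in\{5,6,7\}$ and rationality of quintic del Pezzo surfaces. Some routine steps should be made explicit:
\begin{itemize}
\item In (ii), if $K_{S'}^2\ge 5$ you simply apply (i).
\item A minimal conic bundle with $K^2=8$ and a $\KK$-point is rational because its generic fibre has a $\KK(t)$-point. For $\bar S'\simeq\mathbb F_n$ with $n\ge 1$ the negative section is defined over $\KK$. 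For $n=0$ the fibre of the second ruling through the $\KK$-point is Galois-stable, hence a $\KK$-section.
\item In degree $8$, minimality excludes $\mathbb F_1$. Also $-\tfrac12K_{S'}$ is defined over $\KK$ because $S'(\KK)\neq\varnothing$, so $S'$ really is a quadric in $\PP^3$.
\end{itemize}

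The weak spot is your degree-$4$ ``main obstacle''. The same issue arises in degree $6$ when $p$ lies on the hexagon; there your alternative is not an argument, since projecting the sextic from a point on it gives a quintic, not a surface of higher degree.

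First, the proposed detour fails precisely in the case it is meant for. If $p$ lies on two lines $\ell_1,\ell_2$, every hyperplane section tangent at $p$ is $\ell_1+\ell_2$ plus a conic. It is not an irreducible singular curve of arithmetic genus one, so the tangent-section unirationality construction gives nothing there.

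Second, and more to the point, the case never occurs. On a del Pezzo surface of degree $4$ or $6$, at most two lines pass through a point: three pairwise meeting lines $D=\ell_1+\ell_2+\ell_3$ would give $D^2=3$ and $-K\cdot D=3$, contradicting $D^2K^2\le (K\cdot D)^2$.
\begin{itemize}
\item If a $\KK$-point lies on exactly one line, that line is defined over $\KK$, as you observed.
\item If it lies on two, the class $\ell_1+\ell_2$ is Galois-invariant with $(\ell_1+\ell_2)^2=0$ and $-K\cdot(\ell_1+\ell_2)=2$. This is impossible when $\uprho(S')=1$, since then every invariant class is a rational multiple of $K_{S'}$ and $K_{S'}^2>0$. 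In any case $|\ell_1+\ell_2|$ would itself already be the desired conic bundle.
\end{itemize}
Hence on a rank-one $S'$ of degree $4$ or $6$ every $\KK$-point lies off the lines. Blowing it up gives a smooth cubic surface, respectively a quintic del Pezzo surface, directly. No unirationality or density of $\KK$-points is needed.
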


For a Du Val singularity $P\in S$ over $\KK$ define
the number $\mathrm{s}(P)$ as follows:
\begin{equation}
\label{eq:kP}
\mathrm{s}(P)=
\begin{cases}
n & \text{if $P\in S$ is of type \type{E}{n} with $n=7$ or $8$,}
\\
4 & \text{if $P\in S$ is of type \type{E}{6},}
\\
2 & \text{if $P\in S$ is of type \type{D}{4},}
\\
n-1 & \text{if $P\in S$ is of type \type{D}{n} with $n\ge 5$,}
\\
\lceil n/2 \rceil & \text{if $P\in S$ is of type \type{A}{n}.}
\end{cases}
\end{equation}
This is the number of orbits of the action of the automorphism group of the corresponding Dynkin graph on the set of its vertices.
Then we have the following easy fact.

\begin{lemma}
\label{lemma:kP}
Let~$S$ be a surface over $\KK$ and let $P\in S$ be a Du Val $\KK$-point.
Let $\mu: \tilde S\to S$ be the minimal resolution.
Then 
\begin{equation*}
\uprho(\tilde S/S)\ge \mathrm{s}(P).
\end{equation*}
\end{lemma}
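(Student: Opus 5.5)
The plan is to use Galois descent on the exceptional curves of the minimal resolution. Since $P$ is a $\KK$-point, the minimal resolution $\mu:\tilde S\to S$ is defined over $\KK$, because minimal resolutions are unique and hence commute with base change to $\bar\KK$. The exceptional locus $\mu^{-1}(P)_{\bar\KK}$ is a chain, or tree, of $(-2)$-curves $E_1,\dots,E_r$. Its dual graph is the Dynkin diagram $\Gamma$ of the singularity. The absolute Galois group $\mathrm{Gal}(\bar\KK/\KK)$ permutes the $E_i$ and preserves intersection numbers, so it acts on $\Gamma$ through a subgroup $H\subset\Aut(\Gamma)$.

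Next I compute the relative Picard number. The classes $[E_1],\dots,[E_r]$ are linearly independent in $\Pic(\tilde S_{\bar\KK})\otimes\QQ$, since their intersection matrix is negative definite. The relative Néron--Severi group of $\tilde S_{\bar\KK}$ over $S_{\bar\KK}$ near $P$ is spanned by them. Passing to Galois invariants, $\uprho(\tilde S/S)$ is at least the dimension of the invariant subspace $\bigl(\bigoplus_i\QQ[E_i]\bigr)^{\mathrm{Gal}}$. For a permutation representation this dimension equals the number of Galois orbits on $\{E_1,\dots,E_r\}$. Concretely, the orbit sums $\sum_{E\in\mathcal O}E$ are $\KK$-defined divisors on $\tilde S$ whose classes are independent and contracted by $\mu$.

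It remains to bound the number of orbits from below. The orbits of $H$ are unions of orbits of $\Aut(\Gamma)$, so the number of $H$-orbits is at least the number of $\Aut(\Gamma)$-orbits. I check that this last number equals $\mathrm{s}(P)$ case by case:
\begin{itemize}
\item For $\mathrm{A}_n$, $\Aut(\Gamma)=\ZZ/2$ acts by reflection, giving $\lceil n/2\rceil$ orbits.
\item For $\mathrm{D}_4$, $\Aut(\Gamma)=S_3$ permutes the three outer vertices, giving $2$ orbits.
\item For $\mathrm{D}_n$ with $n\ge5$, $\Aut(\Gamma)=\ZZ/2$ swaps the two short legs, giving $n-1$ orbits.
\item For $\mathrm{E}_6$, $\Aut(\Gamma)=\ZZ/2$, giving $4$ orbits.
\item For $\mathrm{E}_7$ and $\mathrm{E}_8$, $\Aut(\Gamma)$ is trivial, giving $n$ orbits.
\end{itemize}

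The only delicate point is to make sure that $\uprho(\tilde S/S)$ is meant as the rank of $\mu$-exceptional classes over $\KK$. I will justify that the Galois-invariant part of $\Pic(\tilde S_{\bar\KK})$ is spanned by classes of $\KK$-divisors, at least after tensoring with $\QQ$. If $\Pic$ descent is an issue, I will argue directly with the $\KK$-divisors given by the orbit sums, which are independent because they are supported on disjoint sets of curves.
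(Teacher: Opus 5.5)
Your proposal is correct and is exactly the argument the paper intends. The paper gives no written proof, only the remark that $\mathrm{s}(P)$ is the number of orbits of the automorphism group of the Dynkin graph on its vertices. Your Galois-orbit count fills this in faithfully: each orbit sum is a $\KK$-divisor contracted by $\mu$, and the orbit sums are numerically independent over $S$ because the intersection form restricted to their span is still negative definite. Disjointness of supports alone would not suffice for this; it is the negative definiteness that does the work. Your case-by-case orbit counts all agree with \eqref{eq:kP}.
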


\begin{proof}[Proof of Proposition~\xref{prop:surf}]
Consider the minimal resolution $\mu: \tilde{S}\to S$.
Then $-K_{\tilde{S}}$ is nef and big, i.e. $\tilde{S}$ is a smooth weak del Pezzo surface.
Run the MMP:
\begin{equation*}
\psi : \tilde{S}\longrightarrow\hat{S}. 
\end{equation*}
Note that $\tilde{S}(\KK)\neq \varnothing$ since the point
$\Bs |-K_S|$ is smooth and defined over $\KK$.
We end up with a smooth weak del Pezzo surface $\hat{S}$ such that 
one of the following holds:
\renewcommand\labelenumi{\rm \Alph{enumi})}
\renewcommand\theenumi{\rm \Alph{enumi})}
\begin{enumerate}

\item 
\label{A}
$\uprho(\hat{S})=1$ and $\hat{S}$ is a del Pezzo surface;

\item 
\label{B}
$\uprho(\hat{S})=2$ and there is a conic bundle structure $\hat{S}\to \PP^1$.
\end{enumerate}
In particular, $\uprho(\hat{S})\le 2$ and we have
\begin{equation}
\label{eq:DP1:2}
\begin{array}{rcl}
K_{\hat{S}}^2&\ge& K_{\tilde{S}}^2 + \uprho(\tilde{S})-\uprho(\hat{S})
= K_{S}^2+ \uprho(S) + \uprho(\tilde{S}/S) -\uprho(\hat{S})
\\
&\ge& 1+ \uprho(S) +\mathrm{s}(P) -\uprho(\hat{S})\ge 2+\mathrm{s}(P)-\uprho(\hat{S})\ge \mathrm{s}(P). 
\end{array}
\end{equation} 

To prove \ref{prop:surf:u} we note that in the case~\ref{B} the surface $\hat{S}$ is unirational by \cite[Corollary~8]{Kollar-Mella-unirationality}.
Thus we may assume that $\hat{S}$ is a del Pezzo surface. In this case $\psi$ is not an isomorphism, because $-K_{\tilde X}$ is not ample 
and so $K_{\hat{S}}^2>K_{\tilde{S}}^2$. Thus $K_{\hat{S}}^2\ge 2$.
Then \cite[Theorem IV.7.8]{Manin:book:74} (see also \cite{Kollar2002a} and \cite[Corollary~3.3]{Salgado-Testa-Varilly})
and conclude that $\hat{S}$ is unirational. This proves~\ref{prop:surf:u}.
For \ref{prop:surf:r} we just note that $K_{\hat{S}}^2\ge \mathrm{s}(P)\ge 5$
by our assumptions, \eqref{eq:kP}, and \eqref{eq:DP1:2}. Then the rationality of $\hat S$ follows from 
Theorem~\ref{thm:smoothDP}.

To prove \ref{prop:surf:c} assume that~$S$ has no conic bundle structures.
Then the case~\ref{B} does not occur and $3\ge K_{\hat{S}}^2\ge \mathrm{s}(P)+1$ again by Theorem~\ref{thm:smoothDP}.
Thus $K_{\hat{S}}^2=3$, $\mathrm{s}(P)=2$, and $P\in \bar S$ is of type \type{A}{3} or \type{A}{4} by \eqref{eq:kP}. 

Let $C\subset\bar S$ be the element of 
$|-K_{S}|$ passing through $P$ and let $\tilde C\subset \tilde S$ 
be its proper transform. 
Write
\begin{equation*}
\mu^* C=\tilde C+\sum_{i=1}^N a_i E_i, 
\end{equation*}
where $E_i$ are geometric components of the $\mu$-exceptional divisor and $a_i$ are 
positive integers. Then the number $(\sum a_i E_i)^2$ is strictly negative because the 
intersection matrix $\|E_i\cdot E_j\|$ is negative definite.
On the other hand, this number is even because $(\sum a_i E_i)^2=2\p(\sum a_i E_i)-2$.
Hence $(\sum a_i E_i)^2\le -2$.  Thus
\begin{equation}
\label{eq:C2}
\textstyle
\tilde C^2 =C^2+(\sum a_i E_i)^2\le -1,\qquad
\tilde C\cdot K_{\tilde S}= C\cdot K_{S} =-1.
\end{equation} 
Since $-2\le 2\p(\tilde C)-2= \tilde C\cdot K_{\tilde S}+ \tilde C^2$, we obtain $\p(\tilde C)=0$ and $\tilde C^2=-1$,
i.e. $\tilde C$ is a $(-1)$-curve (defined over $\KK$). 
In particular,  we may assume that  $\psi$ contracts $\tilde C$.
Since $K_{\tilde S}^2-K_{\hat  S}^2=2$, the $\psi$ exceptional divisor has 
exactly two components $\tilde C$ and $\tilde D$, where both $\tilde C$ and $\tilde D$
are geometrically irreducible.
If none of $\mu$-exceptional curves $E_i$ are contracted by $\psi$, then $\tilde C$ 
must intersect all these curves (because $\hat S$ does not contain $(-2)$-curves). 
On the other hand, it follows from \eqref{eq:C2} that $\tilde C\cdot \sum a_i E_i=2$, a contradiction.
This implies that $\tilde D=E_j$ for some~$j$. Thus $E_j$ is defined over $\KK$.
If $P\in S$ is of type \type{A}{4}, then all the $\mu$-exceptional curves $E_i$ must be defined over $\KK$ and
$3=\uprho(\hat S)+2=\uprho(\tilde S)\ge 1+4$, a contradiction. Thus $P\in S$ is of type \type{A}{3}.
Note that $\tilde C+\sum a_i E_i=\mu^* C\sim -K_{\tilde S}$, hence $\sum a_i \hat E_i\sim -K_{\hat S}$,
where $\hat E_i:=\psi_* E_i$. Thus the divisor $\sum a_i \hat E_i$ is a hyperplane section of 
the smooth cubic surface $\hat S$ having two components. But then one of these components must be a conic.
\end{proof}

\section{Macaulay2 code}
\label{sect:code}
Here we present the Macaulay2 code used in Example~\ref{ex:PSL27}.

\begin{verbatim}
KK=QQ; R=KK[x_0,x_1,x_2,y];
phi_4=x_1*x_2^3+x_2*x_0^3+x_0*x_1^3; -- the invariant of degree 4
pv= matrix {{x_0,x_1,x_2}}; 
phi_6=(det diff(pv ** transpose pv, phi_4))/54; -- the invariant of degree 6
lambda=3; 
f=y^3+y*lambda*phi_4+phi_6; -- the equation of $X$
f=sub(f,{x_2=>1}); -- affine equation
R1=KK[x_0,x_1,y]; 
f=sub(f,R1);
pv= matrix {{x_0,x_1,y}}; 
polhess =det (diff(pv ** transpose pv, f)); -- Hessian of the equation
degree ((R1/ ideal (diff (x_0,f), diff (x_1,f), diff (y,f),f ))) 
-- the number of singularities of $X$
associatedPrimes(ideal (polhess, diff (x_0,f), diff (x_1,f), diff (y,f),f)) 
-- the singularitie of $X$ that are not ODPs
\end{verbatim}

\bibliography{all,prokho,inv}
\bibliographystyle{alpha}
\end{document}